\newcommand{\ie}{{i.e.}\ }
\newcommand{\cf}{{cf.}\ }
\newcommand{\ko}{\: , \;}
\newcommand{\ol}[1]{\overline{#1}}
\numberwithin{equation}{subsection}
\newtheorem{classification-theorem}[subsection]{Classification Theorem}
\newtheorem{decomposition-theorem}[subsection]{Decomposition Theorem}
\newtheorem{proposition-definition}[subsection]{Proposition-Definition}
\newtheorem{periodicity-conjecture}[subsection]{Periodicity Conjecture}
\newtheorem{theorem}{Theorem}
\numberwithin{theorem}{subsection}
\newtheorem{thmx}{Theorem}
\newtheorem{lemma}[theorem]{Lemma}
\newtheorem{proposition}[theorem]{Proposition}
\newtheorem{corollary}[theorem]{Corollary}
\newtheorem{remark}[theorem]{Remark}
\newtheorem{remarks}[theorem]{Remarks}
\newtheorem{assumptions}[theorem]{Assumptions}
\newcommand{\reminder}[1]{}
\renewcommand{\mod}{\mathrm{mod}\,}
\newcommand{\Mod}{\mathrm{Mod}\,}
\newcommand{\per}{\mathrm{per}\,}
\newcommand{\pvd}{\mathrm{pvd}\,}
\newcommand{\op}{^{op}}
\newcommand{\HH}{\mathrm{HH}}
\newcommand{\HC}{\mathrm{HC}}
\newcommand{\HN}{\mathrm{HN}}
\newcommand{\HP}{\mathrm{HP}}
\newcommand{\Tr}{\mathrm{Tr}}
\newcommand{\tr}{\mathrm{tr}}
\newcommand{\PC}{\mathrm{PC}\,}
\newcommand{\PCAlgc}{\mathrm{PCAlgc}\,}
\newcommand{\red}{\mathrm{red}}
\newcommand{\pc}{\mathrm{pc}}
\newcommand{\res}{\mathrm{res}}
\newcommand{\sym}{\mathrm{sym}}
\newcommand{\cok}{\mathrm{cok}\,}
\newcommand{\im}{\mathrm{im}\,}
\renewcommand{\ker}{\mathrm{ker}\,}
\newcommand{\cone}{\mathrm{cone}\,}
\newcommand{\cocone}{\mathrm{cocone}\,}
\newcommand{\Z}{\mathbb{Z}}
\newcommand{\N}{\mathbb{N}}
\newcommand{\D}{\mathbb{D}}
\newcommand{\iso}{\xrightarrow{_\sim}}
\newcommand{\liso}{\xleftarrow{_\sim}}
\newcommand{\id}{\mathbf{1}}
\newcommand{\Hom}{\mathrm{Hom}}
\newcommand{\RHom}{\mathrm{RHom}}
\newcommand{\Ext}{\mathrm{Ext}}
\newcommand{\rad}{\mathrm{rad}\,}
\newcommand{\ten}{\otimes}
\newcommand{\lten}{\overset{\mathrm{L}}{\ten}}
\newcommand{\ldb}{\{\!\!\{}
\newcommand{\rdb}{\}\!\!\}}
\newcommand{\ca}{{\mathcal A}}
\newcommand{\cb}{{\mathcal B}}
\newcommand{\cc}{{\mathcal C}}
\newcommand{\cd}{{\mathcal D}}
\newcommand{\bp}{\mathbf{p}}
\newcommand{\rb}{\mathrm{B}}
\newcommand{\rc}{\mathrm{C}}
\newcommand{\La}{\Lambda}
\newcommand{\Si}{\Sigma}
\newcommand{\si}{\sigma}
\newcommand{\eps}{\varepsilon}
\newcommand{\del}{\partial}
\renewcommand{\hat}[1]{\widehat{#1}}
\renewcommand{\tilde}[1]{\widetilde{#1}}
\begin{document}

\date{\today}

\title[Relative Calabi--Yau structures and ice quivers with potential]{Relative Calabi--Yau structures and\\[0.15cm]
ice quivers with potential}

\author{Bernhard Keller}
\address{Universit\'e Paris Cit\'e and Sorbonne Université, CNRS, IMJ-PRG, F-75013 Paris, France}
\email{bernhard.keller@imj-prg.fr}
\urladdr{https://webusers.imj-prg.fr/~bernhard.keller/}

\author{Junyang Liu}
\address{Universit\'e Paris Cit\'e and Sorbonne Université, CNRS, IMJ-PRG, F-75013 Paris, France 
and Yau Mathematical Sciences Center, Tsinghua University, Beijing 100084, China}
\email{liuj@imj-prg.fr}
\urladdr{https://webusers.imj-prg.fr/~junyang.liu}

\begin{abstract}

In 2015, Van den Bergh showed that complete $3$-Calabi--Yau algebras
over an algebraically closed field of characteristic $0$ are equivalent to Ginzburg dg algebras associated with quivers with potential. He also proved the natural generalisation to higher dimensions and non-algebraically closed ground fields. The relative version of the notion of Ginzburg dg algebra is that of Ginzburg morphism. For example, every ice quiver with
potential gives rise to a Ginzburg morphism. We generalise Van den Bergh's theorem
by showing that, under suitable assumptions, any morphism with a relative Calabi--Yau structure is equivalent to a Ginzburg(--Lazaroiu) morphism. In particular, in dimension~$3$ and over an
algebraically closed ground field of characteristic $0$, it is given by an ice quiver with potential. Thanks to the work of
Bozec--Calaque--Scherotzke, this result can also be viewed as a noncommutative analogue of Joyce--Safronov's Lagrangian neighbourhood theorem in derived symplectic geometry.
\end{abstract}

\keywords{relative Calabi--Yau structure, ice quiver with potential, dg algebra, Ginzburg--Lazaroiu morphism}

\subjclass[2020]{16E45}


\maketitle

\vspace*{-1cm}
\tableofcontents

\section{Introduction}
Following Kontsevich, a $\Hom$-finite triangulated category is called $d$-Calabi--Yau if it admits the $d$th power
of the suspension functor as a Serre functor. The terminology is motivated by the example of the
bounded derived category of coherent sheaves on a Calabi--Yau smooth projective variety of dimension $d$. In a noncommutative setting, \linebreak $d$-Calabi--Yau categories appear as bounded derived categories of finite-dimensional modules over $d$-Calabi--Yau algebras,
a notion introduced by Ginzburg in his fundamental preprint \cite{Ginzburg06}.
Here, for each quiver(=oriented graph) with potential, he constructed a special kind of dg algebra,
now called ($3$-dimensional) Ginzburg dg algebra, and showed that it is $3$-Calabi--Yau if its homology is concentrated in degree $0$. Keller observed that Ginzburg dg algebras are always $3$-Calabi--Yau and Van den Bergh proved it in the appendix to \cite{Keller11b}. These algebras found important applications in the representation-theoretic approach to the theory of cluster algebras, \cf \cite{FominZelevinsky02, FockGoncharov06a,DerksenWeymanZelevinsky08, DerksenWeymanZelevinsky10}, via cluster categories, \cf~\cite{Amiot09, KellerYang11, Keller11b}.
Motivated by the `relation completions' which occur in this context \cite{AssemBruestleSchiffler08},
Keller \cite{Keller11b} generalised the construction of Ginzburg dg algebras to deformed Calabi--Yau completions.
In \cite{Yeung16}, Yeung introduced, more generally, deformed relative Calabi--Yau completions of dg functors and
showed that they have relative Calabi--Yau structures (\cf~below) 
for dg functors between finitely cellular dg categories. This result was
generalised to dg functors between arbitrary smooth dg categories by Bozec--Calaque--Scherotzke \cite{BozecCalaqueScherotzke24},
who also confirmed Yeung's conjecture that they are the correct noncommutative analogues of cotangent bundles.

Ginzburg conjectured in \cite{Ginzburg06} that each $3$-Calabi--Yau algebra `arising in nature' comes
from a quiver with potential but this was disproved by Davison \cite{Davison12}, who showed
that this is not the case for the group algebra of the fundamental group of a compact hyperbolic manifold
of dimension greater than one. However, Van den Bergh confirmed Ginzburg's conjecture
for {\em complete} $3$-Calabi--Yau algebras in \cite{VandenBergh15}: he showed more generally that
each complete $d$-Calabi--Yau dg algebra is weakly equivalent to a deformed dg preprojective algebra.
For example, in dimension $3$, it is given by a quiver with potential. Note that, as explained in
section~1.3 of \cite{KinjoMasuda24}, potentials are of great use in Donaldson--Thomas
theory \cite{Szendroi08, KontsevichSoibelman08, JoyceSong12, Reineke11, DavisonMeinhardt15} and
cohomological Donaldson--Thomas theory \cite{KontsevichSoibelman11, SchiffmannVasserot13, DavisonMeinhardt20}.
 
A `relative' version of the notion of Calabi--Yau structure was first sketched by To\"en in
\cite{Toen14} and then fully developed by Brav--Dyckerhoff \cite{BravDyckerhoff19, BravDyckerhoff21}.
A relative (left) Calabi--Yau structure on a dg functor is given by a class in relative negative cyclic homology whose underlying Hochschild class is non-degenerate.
It should be thought of as analogous to the datum of an
orientation on a manifold {\em with boundary}.
Many examples arise as deformed relative Calabi--Yau
completions as introduced by Yeung \cite{Yeung16}. He
advocated the idea that they should be viewed as noncommutative conormal bundles,
which was justified using Kontsevich--Rosenberg’s criterion by Bozec--Calaque--Scherotzke in
\cite{BozecCalaqueScherotzke24}. A more economical `reduced' version of the relative Calabi--Yau
completion is due to Wu \cite{Wu23a}. In particular, the ($3$-dimensional) Ginzburg morphism
associated with an {\em ice} quiver with potential arises in this way and therefore
carries a relative $3$-Calabi--Yau structure.
The relative Ginzburg dg algebra is the target of this morphism. It has been used by
Wu \cite{Wu23a, KellerWu23} to construct (additive) categorifications of large classes of
cluster algebras {\em with coefficients} \cite{FominZelevinsky07} generalising Geiss--Leclerc--Schr\"oer's approach
\cite{GeissLeclercSchroeer05, GeissLeclercSchroeer06, GeissLeclercSchroeer08a, GeissLeclercSchroeer11b,
GeissLeclercSchroeer13b} and extending earlier work by Pressland
\cite{Pressland17, Pressland17b, Pressland20, Pressland22}.

One of the key features of Brav--Dyckerhoff's notion of relative Calabi--Yau structure is a gluing
construction analogous to that in cobordism of manifolds. It was used by Christ \cite{Christ22}
to give a local-to-global construction of the Ginzburg dg algebra associated with a triangulated
surface without punctures (the corresponding quiver with potential had been known since
the work of Labardini-Fragoso \cite{Labardini09a}). Christ described the (unbounded)
derived category of the Ginzburg dg algebra via global sections of a perverse schober,
which allowed him to construct \cite{Christ21} new geometric models for its objects and morphisms
and to establish \cite{Christ22a} an unexpected connection with topological Fukaya categories.

Our aim in this article is to generalise Van den Bergh's theorem to the relative case:

\begin{thmx}[see Theorem~\ref{thm:main} for details] \label{thm:A}
Under suitable assumptions, for a morphism $f\colon B \to A$ between pseudocompact dg algebras and an integer $d\geq 2$, the following are equivalent.
\begin{itemize}
\item[i)] $f$ is weakly equivalent to a $d$-dimensional Ginzburg--Lazaroiu morphism (\cf section~\ref{ss:Ginzburg-Lazaroiu morphisms}).
\item[ii)] $f$ carries a relative $d$-Calabi--Yau structure (\cf section~\ref{ss:Calabi-Yau structures}).
\end{itemize}
\end{thmx}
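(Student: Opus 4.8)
The plan is to prove the two implications separately, with the direction (i) $\Rightarrow$ (ii) being the formal one and (ii) $\Rightarrow$ (i) carrying the real content. For (i) $\Rightarrow$ (ii), I would recall that a Ginzburg--Lazaroiu morphism arises as a (reduced) deformed relative Calabi--Yau completion in the sense of Yeung, and that such completions carry a canonical relative $d$-Calabi--Yau structure by the results of Yeung \cite{Yeung16} and Bozec--Calaque--Scherotzke \cite{BozecCalaqueScherotzke20}. Since a relative Calabi--Yau structure is by definition a non-degenerate class in relative negative cyclic homology, and since weak equivalences induce isomorphisms on relative negative cyclic homology compatibly with the non-degeneracy condition, the structure transports along any weak equivalence. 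Thus this implication reduces to checking that the class produced by the completion is non-degenerate, which is essentially built into the construction.

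The substance of the theorem is the converse (ii) $\Rightarrow$ (i), which generalises Van den Bergh's reconstruction theorem \cite{VandenBergh15} to the relative setting. First I would replace $f$ by a weakly equivalent minimal model. Because $B$ and $A$ are pseudo-compact, each admits a minimal model as a completed tensor (i.e.\ path) dg algebra, $A \simeq \widehat{T}_S V$ and $B \simeq \widehat{T}_R W$, with $f$ induced by a morphism of the underlying generator data and with the $A_\infty$-structure encoded by the continuous differential. The next step is to unpack the given relative $d$-Calabi--Yau class through the relative cyclic bicomplex: the long exact sequence relating the negative cyclic homologies of $B$, of $A$, and the relative group lets me represent the class by a cyclically invariant \emph{relative potential}, namely a pair consisting of a potential $W$ on $A$ together with boundary data witnessing the vanishing of its restriction along $f$ up to a prescribed primitive.

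The heart of the argument is to use non-degeneracy to reconstruct the differential from this cyclic datum. A relative $d$-Calabi--Yau structure amounts to a compatible quasi-isomorphism from the relative inverse dualising bimodule to the $d$-shifted diagonal, that is, to a non-degenerate pairing on the relative (co)tangent complex. As in Kontsevich's cyclic formalism and Van den Bergh's necklace calculus, this pairing identifies continuous derivations with cyclic functions, so that the differential of the minimal model is forced to be the Hamiltonian vector field of the relative potential with respect to the induced symplectic form. Carrying this out by an obstruction-theoretic induction along the filtration by tensor-length yields exactly the defining equations of a Ginzburg--Lazaroiu morphism, with target the relative Ginzburg dg algebra of $(W,f)$; a final compatibility check ensures that the weak equivalence constructed during the induction matches the two Calabi--Yau classes, so that the output genuinely realises the original $f$.

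I expect the main obstacle to lie in this last, relative, bookkeeping. In the absolute case the potential is a single cyclic function and the reconstruction is Van den Bergh's theorem; in the relative case one must simultaneously control the bulk potential on $A$, the boundary contribution on $B$, and the homotopy exhibiting their compatibility along $f$, all of which are coupled through the relative cyclic long exact sequence by the non-degeneracy condition. Ensuring that the obstruction classes vanish at each stage of the induction, and that the intervening homotopies can be chosen coherently --- so that $f$ itself, and not merely $A$ and $B$ separately, acquires the Ginzburg--Lazaroiu form --- is the delicate point, further complicated by the need to verify convergence of all the completed sums in the pseudo-compact framework.
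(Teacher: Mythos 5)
Your overall strategy for ii) $\Rightarrow$ i) is in the same spirit as the paper's (minimal models, a Darboux-type induction on tensor length, reconstruction of the differential as a Hamiltonian vector field for the necklace bracket), but both directions have concrete gaps. The most serious one is that you treat hypothesis ii) as just ``$f$ carries a relative $d$-Calabi--Yau structure'', whereas the actual Theorem~\ref{thm:main} requires in addition that the kernel of the induced map $\Si^{-1}\D\Ext^*_B(l_B,l_B)\to\Si^{-1}\D\Ext^*_A(l_A,l_A)$ be a Lagrangian subspace concentrated in degrees $\leq\frac{3-d}{2}$. This condition is load-bearing: in Step~2 of the paper's proof it is what splits the generators of $B$ as $F_B\oplus R_B\oplus z_B\, l_B$ with $F_B$ and $R_B$ in duality, i.e.\ it decides which generators become frozen arrows (mapped isomorphically into $A$) and which become the reversed arrows sent to derivatives of $w_A$. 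Without it your induction has no way to identify the summands $F$ and $R$ of the quintuple, and the equivalence is not expected to hold. Relatedly, your sketch never invokes the passage (via Brav--Dyckerhoff) from the left relative Calabi--Yau structure on $f$ to the right relative Calabi--Yau structure on the restriction functor $\pvd\!_{dg}A\to\pvd\!_{dg}B$; in the paper this is the source of the dualities on $\Ext^*(l,l)$ that produce the central generator $z_A$, bound the degrees of $V_A$, prove $V_A$ finite-dimensional, and (after a delicate $A_\infty$-module computation in Step~3) show that the quadratic part $\eta_2$ of the form is non-degenerate on $N_A$. These are not bookkeeping issues but the inputs that make the obstruction-theoretic induction start.

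For i) $\Rightarrow$ ii) your reduction to Yeung and Bozec--Calaque--Scherotzke is not available as stated: the paper only asserts that \emph{many} Ginzburg--Lazaroiu morphisms arise as deformed relative Calabi--Yau completions, not all of them (the input is an arbitrary quintuple subject only to Assumptions~\ref{ass:quintuple}, over arbitrary semisimple $l_A$ and $l_B$). The paper instead exhibits the exact relative Calabi--Yau class directly, as $[(\ol{z_A}^\dagger,s\ol{z_B}^\dagger)]$ in reduced relative cyclic homology, computes explicit Hochschild representatives on the cofibrant resolutions $\cone(\Omega^1_{l_A}A\to A\ten_{l_A}A)$, and checks non-degeneracy by reducing to the non-degeneracy of $\eta$ and $\ol{\eta_B}$; it must also verify the Lagrangian and degree conditions of ii), which again do not appear in your argument.
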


For example, in dimension $3$, they are given by ice quivers with potential,
\cf Corollary~\ref{cor:dimension 3}, and in dimension $2$, they are given by ice quivers (without potential),
\cf Corollary~\ref{cor:dimension 2}. We also deduce an analogous structure theorem for certain
Calabi--Yau cospans, \cf Theorem~\ref{thm:CY-cospans}. Thanks to Bozec--Calaque--Scherotzke's theorem 
\cite{BozecCalaqueScherotzke24} linking deformed relative Calabi--Yau completions to shifted cotangent bundles 
in derived symplectic geometry, Theorem~\ref{thm:A} may also be viewed as a noncommutative analogue of
Joyce--Safronov's Lagrangian neighbourhood theorem \cite{JoyceSafronov19}.

The article is organised as follows: in section~\ref{ss:pseudo-compact objects}, we recall pseudocompact vector spaces, algebras and modules from section~3 of \cite{VandenBergh15}. In section~\ref{ss:traces and duality}, we discuss the Casimir element associated with a symmetric algebra. In section~\ref{ss:on pseudo-compact dg algebras}, we discuss the derived category of a
pseudocompact dg algebra, which can be considered as enriched over the category of vector spaces or 
that of pseudocompact vector spaces. In section~\ref{ss:morphisms between pc algebras}, we introduce augmented (non-unital) morphisms between pseudocompact dg algebras and the corresponding model category.
In section~\ref{ss:Calabi-Yau structures}, we discuss left Calabi--Yau structures on pseudocompact dg algebras and right Calabi--Yau structures on (non-pseudocompact) dg algebras and the analogous notions in the relative case and the case of a cospan. In section~\ref{ss:the necklace bracket}, we recall necklace brackets and in section~\ref{ss:Ainfty-algebras and Ainfty-modules}, $A_\infty$-algebras and $A_\infty$-modules. Section~\ref{ss:ice quivers with potential} is a reminder on the (relative) Ginzburg dg algebra (and the Ginzburg morphism) associated with an (ice) quiver with potential. In section~\ref{ss:Ginzburg-Lazaroiu morphisms}, roughly following section~9.2 of \cite{VandenBergh15}, we simultaneously generalise this setup in two directions: from dimension $3$ to arbitrary dimension greater than or equal to $2$ and from tensor algebras over products of copies of the ground field to tensor algebras over arbitrary semisimple algebras. We use the term `Ginzburg--Lazaroiu morphism' for the resulting generalisation of the notion of Ginzburg morphism. 
In section~\ref{ss:the main results}, we state the main results and in section~\ref{ss:proof from i) to ii)} and \ref{ss:proof from ii) to i)}, we prove them.

\subsection*{Acknowledgements}
The authors are grateful to Damien Calaque for pointing out the reference \cite{JoyceSafronov19}.
They thank the organisers of the ARTA 2021, the ICRA 2022 and the ARTA 2022, where the second-named author presented preliminary versions of the results of this article. They are indebted to an anonymous referee for a very careful reading of the manuscript and many helpful comments.

The second-named author is supported by the China Scholarship Council (CSC, Grant No.~202006210272) and partially supported by the National Natural Science Foundation of China (Grant No.~11971255).

\section{Notation}

The following notation is used throughout the article: we let $k$ be a field. For a $k$-vector space $V$, we denote its $k$-dual space $\Hom_k(V,k)$ by $DV$. By abuse of notation, following \cite{VandenBergh15}, we write $a=a'\ten a''$ for an element $a=\sum_i a'_i\ten a''_i$ of a tensor product. Unless we specify otherwise, algebras have units but morphisms between algebras do not necessarily preserve the units. Modules are unital right modules. We assume that $k$ acts centrally on all bimodules we consider. For a $k$-algebra $l$, we denote the category of $l$-modules by $\Mod l$ and that of finitely generated $l$-modules by $\mod l$. The internal degree of a homogeneous element $a$ in a graded vector space is denoted by $|a|$. We denote the shift functor of graded vector spaces by $\Si$ and write $s\colon A\to \Si A$ for the canonical map of degree $-1$. We use cohomological grading so that differentials are of degree $1$. For any dg algebra, we denote its differential by $d$. We write $A^e$ for the dg enveloping algebra $A\ten_k A\op$ of any dg $k$-algebra $A$. For a graded quiver $Q$, we write $kQ$ for the associated completed graded path algebra. For a graded $l$-bimodule $V$, we write $T_l V$ for the completed graded tensor algebra $\prod_{p\geq 0}V^{\ten_l p}$. The component of tensor degree $n$ of an element $\eta$ in a tensor algebra is denoted by $\eta_n$. The term `symplectic form' means `graded symplectic form' and `Lagrangian subspace' means `Lagrangian homogeneous subspace'.

\section{Preliminaries}

\subsection{Pseudocompact objects} \label{ss:pseudo-compact objects}

Following section~3 of \cite{VandenBergh15}, \cf also section~IV.3 of \cite{Gabriel62}, a {\em pseudocompact vector space} is a topological vector space $V$ which has a basis of neighbourhoods of $0$ formed by distinguished subspaces of finite codimension such that $V$ is isomorphic to the inverse limit of the system formed by the quotients $V/V'$, where $V'$ runs through open subspaces of $V$. A finite-dimensional vector space endowed with the discrete topology is a pseudocompact vector space and conversely the topology on a finite-dimensional pseudocompact vector space must be the discrete topology. Denote the category of pseudocompact $k$-vector spaces by $\PC k$. Then we have the duality $\D \colon (\Mod k)\op \iso \PC k$ which maps $V$ to its $k$-dual $DV=\Hom_k(V,k)$ endowed with the topology having a basis of neighbourhoods of $0$ formed by the kernels of the restriction maps $DV\to DV'$, where $V'$ runs through finite-dimensional subspaces of $V$. Its quasi-inverse $\D \colon (\PC k)\op \iso \Mod k$ maps $W$ to the $k$-vector space formed by the continuous $k$-linear maps from $W$ to $k$. The category $\PC k$ has a monoidal structure which is given by
\[
V\ten_k W=\D(\D W\ten_k \D V)
\]
for any $V$ and $W$ in $\PC k$. A {\em pseudocompact graded vector space} is a graded vector space, where each component is endowed with a topology making it into a pseudocompact vector space. The category of pseudocompact graded $k$-vector spaces also has a monoidal structure as follows. For any pseudocompact graded $k$-vector spaces $V$ and $W$, the component of degree $n$ of $V\ten_k W$ is given by
\[
\prod_{i+j=n}V_i\ten_k W_j \: .
\]
Following \cite{Gabriel62, VandenBergh01, KellerYang11}, a {\em pseudocompact algebra} is a topological algebra $A$ which has a basis of neighbourhoods of $0$ formed by distinguished right ideals of finite codimension such that $A$ is isomorphic to the inverse limit of the system formed by the quotients $A/I$ as an \linebreak $A$-module, where $I$ runs through open right ideals of $A$. Equivalently, it is a pseudocompact vector space endowed with a continuous multiplication. Recall from part~(1) of Lemma~3.1 of \cite{VandenBergh15} that a pseudocompact algebra also has a basis of neighbourhoods of $0$ formed by distinguished two-sided ideals of finite codimension. If we replace algebras by modules and right ideals by submodules, then we obtain the definition of {\em pseudocompact modules}. If we replace algebras by graded algebras and right ideals by graded right ideals, then we obtain the definition of {\em pseudocompact graded algebras}. A {\em pseudocompact dg (=differential graded) algebra} is a pseudocompact graded algebra endowed with a continuous differential satisfying the graded Leibniz rule. Similarly, one can define {\em pseudocompact dg modules}.

\subsection{Traces and duality} \label{ss:traces and duality}

Recall that for a finite-dimensional $k$-algebra $l$, a {\em trace} on $l$ is a $k$-linear map $\tr \colon l\to k$ such that the bilinear form $l\times l \to k$ which maps $(a,b)$ to $\tr(ab)$ is symmetric and non-degenerate. Equivalently, the map from $l$ to its $k$-dual $Dl=\Hom_k(l,k)$ which maps $a$ to $\tr(a\cdot\, ?)$ is an $l$-bimodule isomorphism. Thus, if $l$ admits a trace, it is unique up to multiplication by an invertible central element of $l$. Let $l$ be a finite-dimensional $k$-algebra and $\tr$ a trace on $l$. We have the canonical $k$-linear bijection $ l\ten_k Dl \iso \Hom_k(l, l)$. 
Thus, we obtain the bijections
\[
\begin{tikzcd}
l\ten_k l \arrow{r}{\sim} & l\ten_k Dl \arrow{r}{\sim} & \Hom_k(l, l)\: .
\end{tikzcd}
\]
The {\em Casimir element} corresponding to $\tr$ is the preimage $\si$
of the identity $\id_l$ under the composed bijection. Explicitly, we can
write $\si=\sum e_i \ten f_i$, where $(e_i)$ is a $k$-basis of $l$ and $(f_i)$ the dual $k$-basis
with respect to the non-degenerate bilinear form which maps $(a,b)$ to $\tr(ab)$. Recall that following \cite{VandenBergh15}, by abuse of notation we write $\si=\si'\ten \si''$.

\begin{lemma} \label{lemma:bimodule isomorphism}
The composed bijection
\[
\begin{tikzcd}
 l\ten_k l \arrow{r}{\sim} &  l\ten_k Dl \arrow{r}{\sim} & \Hom_k(l, l)
\end{tikzcd}
\]
is an isomorphism of $l$-bimodules, where the bimodule structure on $l\ten_k l$ is given by the outer $l$-bimodule structure and the bimodule structure on $\Hom_k(l, l)$ is given by the left $l$-module structures on both arguments.
\end{lemma}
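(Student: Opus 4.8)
The plan is to make the composite bijection fully explicit and then verify $l$-linearity on each side. Write $\tau\colon l\to Dl$ for the map $c\mapsto \tr(c\cdot\,?)$ used in the first arrow and $\Psi\colon l\ten_k Dl\iso\Hom_k(l,l)$ for the canonical evaluation isomorphism $x\ten\phi\mapsto(b\mapsto\phi(b)\,x)$ giving the second arrow. Then the composite $\Phi=\Psi\circ(\id_l\ten\tau)$ is given by the closed formula
\[
\Phi(a\ten c)\colon b\longmapsto \tr(cb)\,a .
\]
Since the map is already known to be a bijection (this was the content of the discussion preceding the lemma, where $\tau$ is bijective by non-degeneracy of $\tr$ and $\Psi$ is the standard finite-dimensional isomorphism), the only thing left to prove is that $\Phi$ intertwines the two bimodule structures.

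Here the single point requiring care is to unwind how ``the left $l$-module structures on both arguments'' assemble into an honest bimodule structure on $\Hom_k(l,l)$: the left module structure on the \emph{target} copy of $l$ yields the left action $(\lambda\cdot f)(b)=\lambda\,f(b)$, while the left module structure on the \emph{source} copy of $l$ yields the right action $(f\cdot\mu)(b)=f(\mu b)$. With these conventions fixed, the verification is immediate. For the left action, the outer structure gives $\lambda\cdot(a\ten c)=(\lambda a)\ten c$, and
\[
(\lambda\cdot\Phi(a\ten c))(b)=\lambda\,\tr(cb)\,a=\tr(cb)\,(\lambda a)=\Phi((\lambda a)\ten c)(b).
\]
For the right action, the outer structure gives $(a\ten c)\cdot\mu=a\ten(c\mu)$, and
\[
(\Phi(a\ten c)\cdot\mu)(b)=\Phi(a\ten c)(\mu b)=\tr(c\mu b)\,a=\Phi(a\ten(c\mu))(b).
\]

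A cleaner way to organise the same computation, which I would actually record, is to factor the claim through the two arrows: equip $l\ten_k Dl$ with $l$ acting on the first factor and the natural right action $(\phi\cdot\mu)(b)=\phi(\mu b)$ on $Dl$. Then $\Psi$ is visibly a bimodule isomorphism, and one is reduced to checking that $\id_l\ten\tau$ is one, i.e. that $\tau$ respects the right action; this is the identity $\tr((c\mu)b)=\tr(c(\mu b))$, which is just associativity in $l$. It is worth noting that, with the outer structure in play, only the right-linearity of $\tau$ intervenes, so the full symmetry (cyclicity) of the trace is \emph{not} needed for this compatibility --- the trace hypothesis enters the lemma only through the non-degeneracy that makes $\tau$ bijective. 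Consequently there is no genuine obstacle here: the proof is a direct verification, and the only thing demanding attention is the bookkeeping of which copy of $l$ supplies which one-sided action.
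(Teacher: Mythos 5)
Your proof is correct; the paper explicitly leaves this verification to the reader, and your direct computation (together with the cleaner factorisation through $\id_l\ten\tau$ and the observation that only non-degeneracy, not symmetry, of the trace is used) is exactly the intended routine check. Your reading of the bimodule structure on $\Hom_k(l,l)$ is also consistent with how the lemma is used afterwards, namely that $\id_l$ is central.
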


We leave the straightforward proof to the reader.

\begin{lemma} \label{lemma:Casimir element}
The Casimir element $\si$ is symmetric and $l$-central, \ie we have
\[
\si'\ten \si'' = \si''\ten \si' \quad\mbox{and} \quad
a\si'\ten \si'' = \si'\ten \si''a
\]
for all $a$ lying in $l$.
\end{lemma}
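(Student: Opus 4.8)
The plan is to read both properties off the explicit description $\si=\sum_i e_i\ten f_i$, where $(e_i)$ is a $k$-basis of $l$ and $(f_i)$ is the dual basis for the form $(a,b)\mapsto\tr(ab)$, so that $\tr(e_if_j)=\delta_{ij}$.

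For the symmetry $\si'\ten\si''=\si''\ten\si'$, I would exploit two facts: that the Casimir element is intrinsic (it is the preimage of $\id_l$ under the composed bijection, hence independent of the chosen basis) and that the form is symmetric. Concretely, I would recompute $\si$ using the basis $(f_i)$ in place of $(e_i)$. Its dual basis is again $(e_i)$, because $\tr(f_ie_j)=\tr(e_jf_i)=\delta_{ji}=\delta_{ij}$ by symmetry of the form. Therefore $\si=\sum_i f_i\ten e_i$ as well, and comparing the two expressions for the same element gives $\si'\ten\si''=\si''\ten\si'$. Alternatively, writing $f_i=\sum_k(G^{-1})_{ik}e_k$ for the Gram matrix $G_{ij}=\tr(e_ie_j)$, the coefficient matrix of $\si$ in the basis $(e_i\ten e_j)$ is $G^{-1}$, which is symmetric because $G$ is; swap-invariance of $\si$ is exactly this symmetry.

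For the $l$-centrality $a\si'\ten\si''=\si'\ten\si''a$, I would invoke Lemma~\ref{lemma:bimodule isomorphism}. Under the bimodule isomorphism $l\ten_k l\iso\Hom_k(l,l)$ the element $\si$ corresponds to $\id_l$; the source $l\ten_k l$ carries the outer structure $a\cdot(x\ten y)\cdot b=ax\ten yb$, and the target carries the structure $(a\cdot\varphi\cdot b)(x)=a\,\varphi(bx)$ coming from the left module structures on both arguments. The key observation is that $\id_l$ is central for this structure: both $a\cdot\id_l$ and $\id_l\cdot a$ are equal to left multiplication by $a$. Transporting this equality back along the isomorphism yields $a\cdot\si=\si\cdot a$ for the outer structure, which is precisely $a\si'\ten\si''=\si'\ten\si''a$.

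The computations are routine; the one point to watch is the bookkeeping of the conventions in Lemma~\ref{lemma:bimodule isomorphism}, namely checking that the outer action on $l\ten_k l$ matches the stated left-on-both-arguments action on $\Hom_k(l,l)$, so that the manifest centrality of $\id_l$ genuinely translates into the asserted centrality of $\si$. Once the conventions are aligned, both statements follow immediately.
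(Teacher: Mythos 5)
Your proposal is correct and follows essentially the same route as the paper: the symmetry of $\si$ is obtained by swapping the roles of the bases $(e_i)$ and $(f_i)$ using the symmetry of the trace form, and the $l$-centrality is read off from Lemma~\ref{lemma:bimodule isomorphism} together with the centrality of $\id_l$ in $\Hom_k(l,l)$. The Gram-matrix reformulation you mention is a harmless variant of the same argument.
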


\begin{proof}
We have $\si=\sum e_i \ten f_i$, where $(e_i)$ is any basis of $l$ and $(f_i)$
the dual basis with respect to the bilinear form which maps $(a,b)$ to $\tr(ab)$. 
Since this bilinear form is symmetric, the basis $(e_i)$ is also the dual basis
of $(f_i)$ so that we also have $\si=\sum f_i \ten e_i$, 
which shows the first equality. The second equality
is clear by Lemma~\ref{lemma:bimodule isomorphism} and the fact that $\id_l$ is central in the $l$-bimodule $\Hom_k(l, l)$. 
\end{proof}

\begin{lemma} \label{lemma:l-dual}
For any right $l$-module $M$, we have the isomorphism
\[
\Hom_l(M,l) \simeq \Hom_k(M,k)
\]
of left $l$-modules mapping $\theta$ to $\tr\circ \theta$. Its inverse maps a $k$-linear form $\phi$ to the morphism mapping $m$ to $\si'\phi(m \si'')$.
\end{lemma}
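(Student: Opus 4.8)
The plan is to exhibit the two maps explicitly and check they are mutually inverse $l$-module homomorphisms. First I would define $\Phi\colon \Hom_l(M,l)\to \Hom_k(M,k)$ by $\Phi(\theta)=\tr\circ\theta$. That this lands in $\Hom_k(M,k)$ is immediate since $\tr$ is $k$-linear and $\theta$ is in particular $k$-linear. The left $l$-module structures are the natural ones: on $\Hom_l(M,l)$ an element $a\in l$ acts on $\theta$ by $(a\theta)(m)=a\cdot\theta(m)$ (using the left $l$-module structure on the target $l$), and on $\Hom_k(M,k)$ by $(a\phi)(m)=\phi(m)\cdot$\,(left action)\,---here I would need to be careful, since the left $l$-action on $\Hom_k(M,k)=DM$ comes from the \emph{right} $l$-module structure on $M$, via $(a\phi)(m)=\phi(ma)$. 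So $l$-linearity of $\Phi$ is the assertion that $\tr(a\,\theta(m))=\tr(\theta(ma))$ for all $a,m$, which follows from $\theta$ being right $l$-linear, $\theta(ma)=\theta(m)a$, together with the symmetry of the trace form, $\tr(a\,\theta(m))=\tr(\theta(m)\,a)$.

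Next I would define the candidate inverse $\Psi\colon\Hom_k(M,k)\to\Hom_l(M,l)$ by $\Psi(\phi)(m)=\si'\,\phi(m\si'')$, using the Casimir element $\si=\si'\ten\si''$ from Lemma~\ref{lemma:Casimir element}. I must verify three things: that $\Psi(\phi)$ is right $l$-linear, that $\Psi$ is left $l$-linear, and that $\Phi,\Psi$ are mutually inverse. For right $l$-linearity, compute $\Psi(\phi)(ma)=\si'\,\phi(ma\si'')$; using the $l$-centrality of $\si$ from Lemma~\ref{lemma:Casimir element}, namely $\si'\ten\si''a=a\si'\ten\si''$ (read in the appropriate order), one rewrites this as $\Psi(\phi)(m)\,a$, as required. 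The left $l$-linearity of $\Psi$ is a direct check unwinding the two actions described above.

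The core computation is that $\Phi$ and $\Psi$ are inverse to each other, and this is where I would spend the most care. Starting from $\phi\in\Hom_k(M,k)$, I compute $\Phi(\Psi(\phi))(m)=\tr\bigl(\si'\,\phi(m\si'')\bigr)=\tr(\si')\,\phi(m\si'')$, and here the defining property of the Casimir element---namely that $\si$ represents the identity of $l$ under the trace duality, equivalently $\tr(\si'\cdot x)\,\si''=x$ or the dual-basis identity $\sum_i\tr(e_i x)f_i=x$---collapses the sum to recover $\phi(m)$. Conversely, for $\theta\in\Hom_l(M,l)$, I compute $\Psi(\Phi(\theta))(m)=\si'\,\tr(\theta(m\si''))$ and again use that $(e_i),(f_i)$ are dual bases to contract this to $\theta(m)$, invoking right $l$-linearity of $\theta$ to pull $\si''$ out. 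The main obstacle is purely bookkeeping: keeping track of which tensor factor of $\si$ is being paired against the trace and ensuring the dual-basis contraction $\sum_i e_i\,\tr(f_i\,x)=x$ (and its transpose) is applied with the correct side, for which the symmetry of $\si$ proved in Lemma~\ref{lemma:Casimir element} is exactly what makes both directions work. No deeper idea is needed beyond the two preceding lemmas.
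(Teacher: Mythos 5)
Your argument is correct, but it is organised differently from the paper's. The paper obtains the isomorphism structurally, as the composite
\[
\Hom_l(M,l) \iso \Hom_l(M,\Hom_k(l,k)) \simeq \Hom_k(M\ten_l l,k) \simeq \Hom_k(M,k)\: ,
\]
where the first map is composition with the trace isomorphism $l\iso \Hom_k(l,k)$, $a\mapsto \tr(a\cdot\,?)$, and the second is the Hom-tensor adjunction; the formula for the inverse then falls out of the known inverse $\phi\mapsto \si'\phi(\si'')$ of the trace isomorphism on $l$ itself, with no computation on $M$. You instead write down both maps explicitly and verify by hand, via the dual-basis identities $x=\sum_i\tr(e_ix)f_i=\sum_i\tr(xf_i)e_i$ and the centrality and symmetry of $\si$ from Lemma~\ref{lemma:Casimir element}, that they are mutually inverse morphisms of left $l$-modules. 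Your computations are right (in particular $\tr(\si'\,\phi(m\si''))=\phi(m\sum_i f_i\tr(e_i))=\phi(m)$ and $\si'\tr(\theta(m)\si'')=\theta(m)$ are exactly the two dual-basis contractions, and the right $l$-linearity of $\Psi(\phi)$ uses the ``inner'' centrality $\si'a\ten\si''=\si'\ten a\si''$, which follows from the stated outer centrality combined with the symmetry of $\si$). The trade-off: your route is more elementary and makes the role of the Casimir completely explicit, at the cost of the side-and-order bookkeeping you flag yourself; the paper's route is shorter and makes the naturality in $M$ and the left $l$-linearity automatic, since every arrow in the chain is a canonical module map.
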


\begin{proof}
We have the chain of isomorphisms
\[
\begin{tikzcd}
\Hom_l(M,l) \arrow{r}{\sim} & \Hom_l(M, \Hom_k(l,k)) \arrow[no head]{r}{\sim} & \Hom_k(M\ten_l l, k) & \Hom_k(M,k)\arrow[swap]{l}{\sim}\: ,
\end{tikzcd}
\]
where the first one being given by the composition with the isomorphism
$l \iso \Hom_k(l,k)$ mapping $a$ to $\tr(a\cdot\, ?)$. Their composition clearly
maps $\theta$ to $\tr\circ \theta$. The inverse of $a \mapsto \tr(a\cdot\, ?)$ maps
a linear form $\phi$ on $l$ to $\si'\phi(\si'')$. This implies
the second claim.
\end{proof}

For a (pseudocompact) $l$-bimodule $U$, we define $U^l$ to be the subspace of $l$-central elements in $U$ and $U_l$ to be the quotient $U/[l,U]$ by the subspace generated by the commutators (in the category of pseudocompact vector spaces). Recall that there exists a trace form on any semisimple $k$-algebra by Proposition~5 of \cite{EilenbergNakayama55}, \cf also Proposition~9.8 of \cite{CurtisReiner81}.

\begin{proposition} \label{prop:invariant space}
Suppose that the finite-dimensional $k$-algebra $l$ is separable. Then we have the $k$-linear bijection $U_l \iso U^l$ mapping $m$ to $\si' m\, \si''$.
\end{proposition}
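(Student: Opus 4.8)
The plan is to check that the formula $m\mapsto \si' m\,\si''$ defines a continuous $k$-linear map landing in $U^l$ and killing $[l,U]$, and then to prove bijectivity by reducing to the free bimodule, where the map becomes an isomorphism that can be computed by hand.

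First I would verify well-definedness. For $m\in U$ and $a\in l$, applying the bilinear map $(x,y)\mapsto x m y$ to the $l$-centrality relation $a\si'\ten\si''=\si'\ten\si'' a$ of Lemma~\ref{lemma:Casimir element} yields $a(\si'm\,\si'')=(\si'm\,\si'')a$, so $\si'm\,\si''$ indeed lies in $U^l$. Next, the companion ``sliding'' identity $\si' a\ten\si''=\si'\ten a\si''$, which follows from the symmetry and $l$-centrality of $\si$ (Lemma~\ref{lemma:Casimir element}), gives $\si'(am)\si''=\si'(ma)\si''$ upon applying the same bilinear map; hence the map annihilates the commutator subspace $[l,U]$ and, being continuous, the closure defining $U_l$ in $\PC k$. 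It therefore factors through a continuous $k$-linear map $\bar\Phi\colon U_l\to U^l$.

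For bijectivity I would argue by functoriality. The map $\bar\Phi$ is natural in $U$, and both $U\mapsto U_l$ and $U\mapsto U^l$ are additive $k$-linear functors on pseudo-compact $l$-bimodules, i.e. pseudo-compact modules over $l^e=l\ten_k l\op$, which commute with products. Since $l$ is separable, $l^e$ is semisimple, so every pseudo-compact $l^e$-module is a product of simple modules, each of which is a direct summand of the regular bimodule $l\ten_k l$. It thus suffices to prove $\bar\Phi$ bijective for $U=l\ten_k l$ with its outer bimodule structure. There, the relation $[ax\ten y]=[x\ten ya]$ in $U_l$ shows that $r\mapsto[1\ten r]$ is an isomorphism $l\iso U_l$, while Lemma~\ref{lemma:bimodule isomorphism} identifies $U^l$ with $\End_l({}_l l)\cong l$, its elements being exactly the $z_r:=\sum_i e_i r\ten f_i=\sum_i e_i\ten r f_i$ for $r\in l$ (the two expressions agreeing by the sliding identity). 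A direct computation then gives $\bar\Phi([1\ten r])=\sum_i e_i\ten r f_i=z_r$, so $\bar\Phi$ corresponds to $\id_l$ under these two isomorphisms and is in particular bijective.

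The hard part will be the reduction step: one must check that both functors genuinely commute with the products appearing in the pseudo-compact setting and that every pseudo-compact bimodule is a retract of products of $l\ten_k l$, so that naturality together with the free-module computation forces $\bar\Phi$ to be an isomorphism in general. The separability hypothesis enters precisely here, through the semisimplicity of $l^e$; note that the naive candidate inverse coming from the inclusion $U^l\hookrightarrow U$ only yields multiplication by the central element $\si'\si''$ on $U^l$, which need not be invertible, so separability cannot be bypassed.
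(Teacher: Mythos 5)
Your proof is correct, and it takes a genuinely different route from the paper's. The paper exploits separability in the form ``$l$ is finitely generated projective over $l^e$'', so that the natural map $U\ten_{l^e}\Hom_{l^e}(l,l^e)\to\Hom_{l^e}(l,U)\cong U^l$ is a bijection for \emph{every} $U$ at once; it then identifies $\Hom_{l^e}(l,l^e)$ with $Dl\cong l$ by applying Lemma~\ref{lemma:l-dual} to $l^e$ with the trace $\tr\ten\tr$, checks that $\tr$ goes to the map $a\mapsto\si a=a\si$, and reads off the formula $m\mapsto\si'm\,\si''$ on $U\ten_{l^e}l\cong U_l$. You instead use separability in the form ``$l^e$ is semisimple'' and reduce by naturality to the regular bimodule $l\ten_kl$, where everything is computed by hand. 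Both arguments are sound; your sliding identity $\si'a\ten\si''=\si'\ten a\si''$ does follow from Lemma~\ref{lemma:Casimir element} (conjugate the centrality relation by the flip and use symmetry), your identification of $U^l$ with the elements $z_r$ is right, and your closing remark that the composite $U^l\hookrightarrow U\to U_l\to U^l$ is multiplication by $\si'\si''$, which can fail to be invertible, is a good sanity check. What the paper's route buys is that it sidesteps entirely the step you rightly flag as delicate: decomposing an arbitrary pseudo-compact $l^e$-module as a product of simples and checking that both $(?)_l$ and $(?)^l$ commute with products (for $(?)_l$ one should note that $[l,U]$ is the image of $U^{\dim_kl}$ under a continuous map, hence closed and compatible with products). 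What your route buys is an explicit, elementary verification on the free bimodule and a transparent reason why the map is given by the stated formula, whereas in the paper this emerges only after unwinding the chain of canonical isomorphisms. If you complete the reduction step you flagged, your argument is a full proof.
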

\begin{proof}
Since $l$ is separable, by definition, it is a finitely generated projective
module over $l^e$ so that the composed $k$-linear map
\[
\begin{tikzcd}
U\ten_{l^e} \Hom_{l^e}(l,l^e) \arrow{r}{\sim} & \Hom_{l^e}(l,U) \arrow{r}{\sim} & U^l
\end{tikzcd}
\]
is bijective. By Lemma~\ref{lemma:l-dual} applied to $l^e=l\ten_k l^{op}$ with the trace
$\tr\ten \tr$ and $M=l$, we have the isomorphism
\[
\Hom_k(l,k) \simeq \Hom_{l^e}(l, l^e)
\]
mapping a linear form $\phi$ on $l$ to the morphism mapping $a$ to 
$\sum_{i,j} e_i \ten e_j \, \phi(f_j a f_i)$,
where $(e_i)$ and $(f_i)$ are dual bases of $l$ for the given trace $\tr$.
One easily checks that the image of $\phi=\tr$ is the morphism mapping $a$ to
$\si a = a \si$ and in particular $1_l$ to $\si$. This 
implies the statement.
\end{proof}

We denote the inverse of the bijection in Proposition~\ref{prop:invariant space} by $m \mapsto m^\dag$.

\subsection{On pseudocompact dg algebras} \label{ss:on pseudo-compact dg algebras}

Let $A$ be a pseudocompact dg $k$-algebra. We define $\cc(A)$ to be the category of pseudocompact dg $A$-modules and consider it as enriched over the category of $k$-vector spaces (not pseudocompact $k$-vector spaces).
We write $\cd(A)$ for the (unbounded) derived category of $A$ in the sense of section~5 of
\cite{VandenBergh15}. So the objects in $\cd(A)$ are the pseudocompact dg $A$-modules
and its morphisms are obtained from morphisms of pseudocompact dg $A$-modules by
localising with respect to a suitable class of weak equivalences (which is usually strictly contained in the class of quasi-isomorphisms). We consider $\cd(A)$ as enriched over the category of $k$-vector spaces (not pseudocompact $k$-vector spaces). 
Its thick subcategory generated by the free dg $A$-module of rank one is the {\em perfect derived category} $\per A$. Note that usually, it does not consist of compact objects in $\cd(A)$.
We define the {\em perfectly valued derived category} $\pvd A$ to be the full subcategory of the 
{\em perfectly valued dg modules} in $\cd(A)$, \ie those dg modules $M$ whose homology is of finite total dimension. Clearly, an object $M$ of $\cd(A)$ belongs to $\pvd A$ if and only if the object $\RHom_A(A,M)$ belongs to $\per k=\pvd k$.
For an algebraic triangulated category $\cc$, we write $\cc_{dg}$ for its canonical dg enhancement.
Note that one can also consider the above categories as enriched over the category of pseudocompact $k$-vector spaces. We denote the pseudocompact morphism space between objects $M$ and $N$ 
in $\cc(A)$ by $\Hom_A^{\pc}(M,N)$. It is the pseudocompact $k$-vector space obtained as the limit of the $\Hom_A(M,N)/V$, where $V$ runs through subspaces of finite codimension containing $\Hom_A(M,N')$ for some open dg $A$-submodule $N'$ of $N$. By deriving the pseudocompact $\Hom$-functor we obtain a pseudocompact enrichment for $\cd(A)$ and the dg enhancement $\cd_{dg}(A)$ of $\cd(A)$.

Recall that $A$ is {\em connective} if its homology $H^p(A)$ vanishes for all $p>0$. 
In this case, the derived category $\cd(A)$ has a canonical t-structure whose aisles are
\begin{align*}
\cd(A)^{\leq 0} &= \{M\in \cd(A)\mid H^p(M)=0\mbox{ for all }p>0\}\mbox{ and}\\
\cd(A)^{\geq 0} &= \{M\in \cd(A)\mid H^p(M)=0\mbox{ for all }p<0\}\: .
\end{align*}
Its heart is equivalent to the module category of $H^0(A)$. The dg algebra $A$ is a {\em stalk algebra} if its homology $H^p(A)$ vanishes for all $p\ne 0$. In this case, we have the quasi-isomorphisms
\[
\begin{tikzcd}
A  & \tau_{\leq 0}(A) \arrow[swap]{l}{\sim} \arrow{r}{\sim} & H^0(A)
\end{tikzcd}
\]
so that $A$ is quasi-isomorphic to the ordinary algebra $H^0(A)$.
 
We write $A^e$ for the {\em dg enveloping algebra} $A\ten_k A\op$. Recall that $A$ is {\em smooth} if 
$A$ is perfect over $A^e$ and that $A$ is {\em proper} if its underlying complex is perfect over $k$. The dg algebra $A$ is proper if and only if its homology $H^p(A)$ is finite-dimensional for all integers $p$ and vanishes for all $|p|\gg 0$. The following proposition is proved in Lemma~4.1 of \cite{Keller08d}. We include our own proof for the convenience of the reader. Recall that a $k$-linear category is $\Hom$-finite if all morphism spaces between its objects are finite-dimensional over $k$.

\begin{proposition} \label{prop:finite dimension}
Suppose that $A$ is smooth.
\begin{itemize}
\item[a)] The subcategory $\pvd A$ is contained in $\per A$.
\item[b)] The subcategory $\pvd A$ is $\Hom$-finite.
\end{itemize}
\end{proposition}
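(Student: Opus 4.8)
The plan is to derive both statements from the single hypothesis $A\in\per A^e$ by transporting this membership along well-chosen triangle functors, and then to reduce part~b) to part~a). The pattern in each case is the standard ``thick-subcategory'' trick: a triangle functor sends the thick subcategory generated by an object $X$ into the thick subcategory generated by the image of $X$.

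For a), I would argue by resolving the diagonal. Smoothness says exactly that the diagonal bimodule $A$ lies in the thick subcategory of $\cd(A^e)$ generated by the free module $A^e=A\ten_k A\op$. Fixing $M\in\pvd A$, I consider the triangle functor $G\colon\cd(A^e)\to\cd(A)$ given on a bimodule $X$ by $X\mapsto M\ten^{\mathbf{L}}_A X$ (tensoring with $M$ over the left factor), which satisfies $G(A)\simeq M$. The key computation is that $G$ sends the free bimodule to a perfect module: since $A^e$ is free as a left $A$-module, one gets $G(A^e)\simeq(\RHom_A(A,M))\ten^{\mathbf{L}}_k A$, the derived tensor over $k$ of the underlying complex of $M$ with $A$. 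Because $M\in\pvd A$, that underlying complex $\RHom_A(A,M)$ lies in $\per k$ by the criterion recalled in section~\ref{ss:on pseudo-compact dg algebras}; applying the triangle functor $-\ten^{\mathbf{L}}_k A\colon\cd(k)\to\cd(A)$, which sends $k$ to the free module $A$, this forces $G(A^e)\in\per A$. As $\per A$ is thick and $G$ is triangulated, $G$ carries the whole thick subcategory generated by $A^e$ into $\per A$; by smoothness this subcategory contains $A$, whence $M\simeq G(A)\in\per A$.

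For b), I would fix $N\in\pvd A$ and consider the class
\[
\cs=\{\,X\in\cd(A)\mid \dim_k\Hom_{\cd(A)}(X,\Si^n N)<\infty\ \text{for all}\ n\in\Z\,\}\point
\]
Since finite-dimensionality is inherited by sub- and quotient spaces and by extensions, the long exact sequence attached to a triangle shows that $\cs$ is a thick subcategory of $\cd(A)$. It contains the free module $A$, because $\Hom_{\cd(A)}(A,\Si^n N)\simeq H^n(N)$, which is finite-dimensional precisely because $N\in\pvd A$. Hence $\per A\subseteq\cs$. Combining this with part~a), every $M\in\pvd A$ lies in $\per A\subseteq\cs$, so all graded morphism spaces $\Hom_{\cd(A)}(M,\Si^n N)$ are finite-dimensional; in particular $\pvd A$ is $\Hom$-finite.

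The main obstacle is not the categorical skeleton above, which is the usual argument over an ordinary field, but the pseudo-compact bookkeeping required to justify each step inside $\cd(A)$ as defined in section~\ref{ss:on pseudo-compact dg algebras}. Concretely, I would have to verify that $G$ is a well-defined triangle functor for the nonstandard class of weak equivalences used there; that the identification $G(A^e)\simeq\RHom_A(A,M)\ten^{\mathbf{L}}_k A$ genuinely holds in the pseudo-compact derived category, so that one is allowed to replace the possibly infinite-dimensional underlying complex of $M$ by a finite-dimensional perfect representative over $k$; and that $-\ten^{\mathbf{L}}_k A$ really lands in $\per A$ when $A$ is pseudo-compact. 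These are exactly the points at which the pseudo-compact framework departs from the classical one, and where Van den Bergh's formalism from section~4 of \cite{VandenBergh15} must be invoked with care.
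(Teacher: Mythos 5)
Your proof is correct and follows essentially the same route as the paper: part~a) transports perfection of the diagonal bimodule along $M\lten_A\,?$, using that $M\lten_A A^e\simeq M\lten_k A$ lies in $\per A$ because the underlying complex of $M$ lies in $\per k$, and part~b) is the standard d\'evissage showing $\RHom_A(P,N)\in\per k$ for all $P\in\per A$, combined with part~a). The pseudo-compact subtleties you flag at the end are real but are absorbed by the framework of section~\ref{ss:on pseudo-compact dg algebras} and do not alter the argument.
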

\begin{proof}
a) Let $M$ be an object in $\pvd A$. Then the underlying complex of $M$ lies in $\per k$ and therefore, the object $M\lten_k A$ lies
in $\per A$. Since $A$ is perfect over $A^e$, the object $M \liso M \lten_A A$ lies in the thick subcategory of $\cd(A)$ generated by $M\lten_A A^e \iso M\lten_k A$ and so in $\per A$.

b) For any objects $P\in \per A$ and $M\in \pvd A$, the complex $\RHom_A(P,M)$ belongs to $\per k$. Therefore, the statement follows from part~a).
\end{proof}

\subsection{Augmented morphisms between pseudocompact dg algebras} \label{ss:morphisms between pc algebras}

Suppose that $l$ is a finite-dimensional semisimple $k$-algebra. Following section~5 of \cite{VandenBergh15}, an {\em $l$-augmented dg algebra} is a dg $k$-algebra $A$ endowed with dg $k$-algebra morphisms
\[
\begin{tikzcd}
l \arrow{r}{\eta} & A \arrow{r}{\eps} & l
\end{tikzcd}
\]
satisfying $\eps \circ \eta =\id_{l}$. We call $\eta$ the unit map and $\eps$ the augmentation map.  A {\em morphism of $l$-augmented dg algebras} is a morphism $\alpha \colon A\to A'$ of dg $k$-algebras which is compatible with the unit and the augmentation maps.

Before going further, let us recall that unless we specify otherwise, we do {\em not} assume that morphisms between algebras preserve units. For example, the morphism $k \to k \times k$ mapping $a$ to $(a, 0)$ and the morphism $k \to \mathrm{M}_2(k)$ mapping $a$ to the matrix
$\begin{bmatrix}
a & 0 \\
0 & 0
\end{bmatrix}$
are morphisms between $k$-algebras in our sense.

Suppose that $l_A$ and $l_B$ are finite-dimensional semisimple $k$-algebras and $\varphi \colon l_B \to l_A$ is a morphism between $k$-algebras (not necessarily preserving the unit!). A {\em $\varphi$-augmented morphism between dg algebras} is a morphism $f\colon B \to A$ between dg $k$-algebras (not necessarily preserving the unit!) which fits into the commutative diagram
\[
\begin{tikzcd}
l_B \arrow{r}{\eta_B} \arrow[swap]{d}{\varphi} & B \arrow{r}{\eps_B} \arrow{d}{f} & l_B \arrow{d}{\varphi} \\
l_A \arrow[swap]{r}{\eta_A} & A \arrow[swap]{r}{\eps_A} & l_A\mathrlap{\: .}
\end{tikzcd}
\]
A {\em morphism of $\varphi$-augmented morphisms between dg algebras} is given by morphisms
\[
\alpha \colon A\to A'\quad \mbox{and}\quad \beta \colon B\to B'
\]
of dg $k$-algebras which are compatible with the unit and the augmentation maps and fit into the commutative square
\[
\begin{tikzcd}
B\arrow[swap]{d}{f}\arrow{r}{\beta} & B'\arrow{d}{f'} \\
A\arrow[swap]{r}{\alpha} & A'\mathrlap{\: .}
\end{tikzcd}
\]
If the above dg algebras are pseudocompact and the morphisms are continuous, we obtain the definition of {\em augmented pseudocompact dg algebras} and {\em augmented morphisms between pseudocompact dg algebras}.
For a pseudocompact dg algebra $A$, its {\em radical} $\rad A$ is defined to be the common annihilator of all the simple pseudocompact dg $A$-modules. An $l$-augmented pseudocompact dg algebra $A$ is {\em complete} if the kernel of $\eps$ equals the radical $\rad A$. Denote by $\PCAlgc l$ the category of complete $l$-augmented pseudocompact dg algebras. By section~12.4 of \cite{VandenBergh15}, \cf also section~1.3.1 of \cite{Lefevre03}, it has a cofibrantly generated model category structure. Thus, it yields a model category structure on the category $\PCAlgc \varphi$ of $\varphi$-augmented morphisms between complete pseudocompact dg algebras, whose weak equivalences are the componentwise weak equivalences. Similarly, we can define the notion of {\em augmented cospans for complete pseudocompact dg algebras}. These also carry a natural model category structure. By sections~12.1 and 12.4 of \cite{VandenBergh15}, a weak equivalence in $\PCAlgc l$ is a quasi-isomorphism. By part~e) of Proposition~1.3.5.1 of \cite{Lefevre03}, \cf also Proposition~12.2 of \cite{VandenBergh15}, the converse statement is true if the source and target of the morphism are concentrated in non-positive degrees.

\begin{remark}
For a finite-dimensional $k$-algebra $l$ and a pseudocompact dg $l$-bimodule $V$, the completed dg tensor algebra
\[
(T_l V=\prod_{p\geq 0}V^{\ten_l p}, d)
\]
endowed with the product topology is the complete $l$-augmented pseudocompact dg algebra characterised by the universal property: for any complete $l$-augmented pseudocompact dg algebra $A$, any morphism $V\to \ker \eps$, where $\eps \colon A \to l$ is the augmentation map, in the category of pseudocompact dg $l$-bimodules extends uniquely to a morphism $(T_l V, d)\to A$ in the category of complete $l$-augmented pseudocompact dg algebras.
\end{remark}

\begin{proposition} \label{prop:decomposition}
Let $l_A$ and $l_B$ be finite-dimensional semisimple $k$-algebras and let $\varphi \colon l_B \to l_A$ be a morphism between $k$-algebras (which does not necessarily preserve the unit!). 
Denote by $\varphi_*(l_A)\liso l_A \cdot \varphi(1_{l_B})$ the restriction to $l_B$ of the free $l_A$-module of rank one. Then $\varphi$ induces the isomorphism $l_B \iso \varphi_*(l_A)$ of $l_B$-modules 
if and only if $\varphi$ is a section of $k$-algebras.
\end{proposition}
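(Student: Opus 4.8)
Write $e := \varphi(1_B)$. Since $\varphi$ is a morphism of $k$-algebras, $e$ is an idempotent and $\varphi(b)=\varphi(1_B)\varphi(b)\varphi(1_B)=e\varphi(b)e$ for all $b$, so $\varphi$ factors as a unital morphism $l_B\to el_Ae$ onto the corner algebra with unit $e$. As modules are required to be unital, the restriction of the regular module $l_A$ along $\varphi$ is the unital right $l_B$-module $\res_\varphi l_A=l_Ae$ (the part on which $1_B$ acts by the identity), with $b\in l_B$ acting by right multiplication with $\varphi(b)$, and the comparison map in (i) is the canonical $l_B$-linear map $\varphi\colon l_B\to l_Ae$, $b\mapsto\varphi(b)$ [well defined as $\varphi(b)=\varphi(b)e$]. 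The plan is to show that this map is bijective if and only if $e$ is central and $\varphi\colon l_B\to el_A$ is an isomorphism of algebras, and to identify the latter condition with $\varphi$ being a section.

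For (i)$\Rightarrow$(ii), I would assume $\varphi\colon l_B\to l_Ae$ is an isomorphism of $l_B$-modules, hence surjective. Since $\im\varphi\subseteq el_Ae$ and always $el_Ae\subseteq l_Ae$, surjectivity yields $l_Ae=el_Ae$, that is $(1-e)\,l_A\,e=0$. This is the heart of the matter: for a \emph{semisimple} $l_A$ such a one-sided relation forces $e$ to be central. Indeed, under $e\mapsto geg^{-1}$ the subspace $(1-e)l_Ae$ is carried to $g\big((1-e)l_Ae\big)g^{-1}$, so its vanishing is invariant under conjugating $e$; decomposing $l_A$ into simple matrix factors and diagonalising $e$ in each, one checks that the vanishing holds exactly when every component of $e$ is $0$ or an identity. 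With $e$ central one has $l_A\simeq el_A\times(1-e)l_A$ as algebras, $\varphi\colon l_B\to el_A$ is a bijective algebra morphism, and the composite $\psi\colon l_A\twoheadrightarrow el_A\xrightarrow{\ \varphi^{-1}\ }l_B$ is a unital $k$-algebra retraction with $\psi\varphi=\id_{l_B}$, so $\varphi$ is a section.

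For (ii)$\Rightarrow$(i), unwinding the definition, a section presents $l_A$ as a product $l_A\simeq l_B\times C$ of $k$-algebras with $\varphi$ the inclusion of the first factor; in particular $e=\varphi(1_B)$ is central and $\varphi\colon l_B\to el_A=el_Ae$ is an isomorphism of algebras. Then $\res_\varphi l_A=l_Ae=el_A=\varphi(l_B)$, so $\varphi$ is onto; it is injective since it admits the factor projection as a retraction, and therefore it is an isomorphism of $l_B$-modules. The retraction exhibited in each direction is the same projection onto the factor $el_A$, which is what links the module-theoretic condition (i) to the algebraic notion of a section in (ii).

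The single genuine obstacle is the semisimplicity step in (i)$\Rightarrow$(ii): passing from the one-sided vanishing $(1-e)l_Ae=0$ to centrality of $e$. Over an arbitrary algebra this is false — for instance $e=E_{11}$ in the upper-triangular $2\times2$ matrices satisfies $(1-e)l_Ae=0$ yet is not central — so the decomposition of the semisimple $l_A$ into matrix algebras, together with the conjugation-invariance of the relation, is doing the real work; it is also precisely what rules out split monomorphisms whose image fails to be a two-sided direct factor. Everything else is bookkeeping with the idempotent $e$ and the identities $\varphi(b)=e\varphi(b)=\varphi(b)e$.
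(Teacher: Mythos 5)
Your proof is correct, but it follows a genuinely different route from the paper's. You work directly with the idempotent $e=\varphi(1_B)$: the hypothesis forces $l_Ae=\varphi(l_B)\subseteq el_Ae$, hence $(1-e)l_Ae=0$, and semisimplicity (via the conjugation-invariance of this relation and the Wedderburn decomposition — equivalently, via the Peirce decomposition and the absence of nilpotent two-sided ideals) then forces $e$ to be central, after which everything reduces to the splitting $l_A\simeq el_A\times(1-e)l_A$. The paper instead argues on the level of module categories: from the hypothesis it deduces that the unit of the adjunction $(\varphi^*,\varphi_*)$ is invertible, hence that induction $\varphi^*$ is fully faithful, then tracks where the simples $E_j$ land, shows distinct ones go to distinct blocks, and pins down $m_j=n_{i_j}$ by a dimension count. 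Both arguments ultimately rest on Wedderburn--Artin, but yours is shorter and isolates the single ring-theoretic fact doing the work ($(1-e)l_Ae=0$ implies $e$ central in a semisimple ring), which — as your triangular-matrix example shows — is exactly where semisimplicity enters; the paper's version detects the same obstruction through full faithfulness of $\varphi^*$. One small point of interpretation: in the converse direction you read ``section'' as ``inclusion of a direct product factor''. This is indeed what the paper intends (its own proof of necessity ends by exhibiting $\varphi$ as a bijection onto a product of Wedderburn blocks, and its ``obvious'' sufficiency direction only works under this reading); under the weaker reading ``split monomorphism of not necessarily unital $k$-algebras'' the converse would fail, e.g.\ for the diagonal embedding $k\to k\times k$, so your choice is the right one and your forward direction in any case produces both the retraction and the product decomposition.
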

\begin{proof}
The sufficiency of the condition is obvious. Let us prove that it is necessary.
Since the finite-dimensional $k$-algebras $l_A$ and $l_B$ are semisimple, by the Wedderburn--Artin theorem, we may and will assume that they are products of matrix algebras over division algebras over $k$. Explicitly, we have
\[
l_A=\prod^p_{i=1}\mathrm{M}_{n_i}(D_i)\quad \mbox{and}\quad l_B=\prod^q_{j=1}\mathrm{M}_{m_j}(E_j)\: .
\]
Thus, we have the equivalences
\[
\mod l_A\simeq \bigoplus^p_{i=1}\mod D_i\quad \mbox{and}\quad \mod l_B\simeq\bigoplus^q_{j=1}\mod E_j
\]
of the corresponding finite-dimensional module categories. The composed functor of the restriction $\varphi_*$ with the induction $\varphi^*$ maps the $l_B$-module $l_B$ to $l_A\cdot \varphi(1_{l_B})$, which is isomorphic to $l_B$ via $\varphi$ by the assumption. Since the $k$-algebra $l_B$ is semisimple, this implies that the unit of the adjunction $(\varphi^*,\varphi_*)$ is a natural isomorphism. Therefore, the induction functor $\varphi^*$ is fully faithful. So it maps the indecomposable object $E_j$ to some $D_{i_j}$. Moreover, if $j_1$ and $j_2$ are distinct, the images of $E_{j_1}$ and $E_{j_2}$ cannot lie in the same block. Therefore, the induction functor $\varphi^*$ factors into the direct sum of fully faithful functors $\mod E_j\to \mod D_{i_j}$ followed by the canonical embedding
\[
\bigoplus^q_{j=1}\mod D_{i_j}\longrightarrow \bigoplus^p_{i=1}\mod D_i\: .
\]
This implies that the image of $\mathrm{M}_{m_j}(E_j)$ under $\varphi$ is contained in $\mathrm{M}_{n_{i_j}}(D_{i_j})$. By full faithfulness again, the $k$-algebra $D_{i_j}$ is isomorphic to $E_j$. The morphism $\varphi$ is injective because for any $b$ in $l_B$ such that $\varphi(b)=0$, the image of $b$ under the $l_B$-module isomorphism $l_B\iso \varphi_*(l_A)$ is $0$. We deduce that we have $m_j\leq n_{i_j}$. If we compare the dimensions over $E_j$ on both sides of the isomorphism 
$\mathrm{M}_{m_j}(E_j)\iso \varphi_*(\mathrm{M}_{n_{i_j}}(E_{j}))$, we see that we
have $m^2_j\geq n_{i_j}m_j$, so $m_j\geq n_{i_j}$. We conclude that $m_j$ equals $n_{i_j}$ and that the morphism $\varphi$ is a bijection onto $\prod^q_{j=1}\mathrm{M}_{n_{i_j}}(D_{i_j})$. This implies the statement.
\end{proof}

\subsection{Calabi--Yau structures} \label{ss:Calabi-Yau structures}

In this section, we recall the necessary background on Hochschild and cyclic homology, absolute and relative Calabi--Yau structures and Calabi--Yau cospans. We work in the setting of (pseudocompact) dg algebras but everything generalises to the setting of small dg categories (enriched over the category of pseudocompact vector spaces).

Following section~1 of \cite{Kassel87}, a {\em mixed complex} over $k$ is a dg module over the dg algebra $\La=k[t]/(t^2)$, where $t$ is an indeterminate of degree $-1$ satisfying $d(t)=0$. Let $l$ be a finite-dimensional separable $k$-algebra. For a dg $l$-algebra $A$, its {\em mixed complex} $\mathrm{M}(A)$ is defined as follows. Its underlying complex is defined to be the cone of the map $\id-\tau$ from the sum total complex $\rb^+(A)_l$ of
\[
\begin{tikzcd}
  \cdots\arrow{r} & (A^{\ten_l 3})_l \arrow{r}{b'} & (A^{\ten_l 2})_l \arrow{r}{b'} & A_l
\end{tikzcd}
\]
to the sum total complex $\rc(A)$ of
\[
\begin{tikzcd}
  \cdots\arrow{r} & (A^{\ten_l 3})_l \arrow{r}{b} & (A^{\ten_l 2})_l \arrow{r}{b} & A_l \: .
\end{tikzcd}
\]
Here $\tau$ maps $a_1\ten\cdots\ten a_p$ to
\[
(-1)^{(|a_p|+1)(p-1+|a_1|+\cdots+|a_{p-1}|)}a_p\ten a_1\ten \cdots\ten a_{p-1}\: ,
\]
the differential of $(A^{\ten_l p})_l$ maps $a_1\ten\cdots\ten a_p$ to
\[
\sum_{i=1}^p (-1)^{i-1+|a_1|+\cdots+|a_{i-1}|}a_1\ten\cdots\ten d(a_i)\ten\cdots\ten a_p \: ,
\]
the map $b$ is the differential of the Hochschild chain complex and $b'$ is induced by that of the augmented bar resolution. Explicitly, the differential $b$ maps $a_1\ten\cdots\ten a_p$ to
\begin{align*}
& \sum_{i=1}^{p-1}(-1)^{i-1+|a_1|+\cdots+|a_i|} a_1\ten\cdots\ten a_i a_{i+1}\ten\cdots\ten a_p \\
& +(-1)^{(|a_p|+1)(p+|a_1|+\cdots+|a_{p-1}|)-1}a_p a_1\ten\cdots\ten a_{p-1}
\end{align*}
and $b'$ maps $a_1\ten\cdots\ten a_p$ to 
\[
\sum_{i=1}^{p-1}(-1)^{i-1+|a_1|+\cdots+|a_i|}a_1\ten\cdots\ten a_i a_{i+1}\ten\cdots\ten a_p\: .
\]
The $\La$-module structure on $\mathrm{M}(A)$ is determined by the action of $t$, which vanishes on $\rb^+(A)_l$ and maps the component $(A^{\ten_l p})_l$ of $\rc(A)$ to the corresponding component of $\rb^+(A)_l$ via the map $\sum_{i=0}^{p-1}\tau^i$.

The {\em Hochschild complex} $\HH(A)$ of $A$ is defined to be the underlying complex of $\mathrm{M}(A)$. By construction, we have the canonical triangle
\[
\begin{tikzcd}
\rb^+(A)_l \arrow{r}{\id-\tau} & \rc(A) \arrow{r} & \mathrm{M}(A) \arrow{r} & \Si \rb^+(A)_l
\end{tikzcd}
\]
in $\cd(k)$. The complex $\rb^+(A)_l$ is contractible (since it is the sum total complex of a contractible complex of complexes) so that the morphism $\rc(A) \to \mathrm{M}(A)$ is a quasi-isomorphism. This shows that our definition of the Hochschild complex coincides with the classical one up to a canonical quasi-isomorphism.

Let $\bp k$ be the minimal cofibrant resolution of $k$ as a dg $\La$-module. The {\em cyclic complex} $\HC(A)$ of $A$ is defined to be the complex $\mathrm{M}(A)\ten_\La \bp k$. The {\em negative cyclic complex} $\HN(A)$ of $A$ is defined to be the complex $\Hom_{\cc_{dg}(\La)}(\bp k, \mathrm{M}(A))$. The {\em periodic cyclic complex} $\HP(A)$ of $A$ is defined to be the inverse limit of the system
\[
\cdots \longrightarrow \Hom_{\cc_{dg}(\La)}(\Si^4 \bp k, \mathrm{M}(A)) \longrightarrow \Hom_{\cc_{dg}(\La)}(\Si^2 \bp k, \mathrm{M}(A)) \longrightarrow \Hom_{\cc_{dg}(\La)}(\bp k, \mathrm{M}(A))\: ,
\]
where the transition maps are induced by the canonical map $\bp k\to \Si^2 \bp k$. Their homologies are called {\em Hochschild homology} $\HH_*(A)$, {\em cyclic homology} $\HC_*(A)$, {\em negative cyclic homology} $\HN_*(A)$, {\em periodic cyclic homology} $\HP_*(A)$, respectively. The ISB triangle
\[
\begin{tikzcd}
  \HH(A)\arrow{r}{I} & \HC(A)\arrow{r}{S} & \Si^2 \HC(A)\arrow{r}{B} & \Si \HH(A)
\end{tikzcd}
\]
in the homotopy category of complexes yields Connes' long exact sequence
\[
\begin{tikzcd}
\cdots \arrow{r} & \HH_{d+1}(A)\arrow{r}{I} & \HC_{d+1}(A)\arrow{r}{S} & \HC_{d-1}(A)\arrow{r}{B} & \HH_d(A)\arrow{r} & \cdots
\end{tikzcd}
\]
which relates Hochschild and cyclic homology, \cf the complex~(7.2) of \cite{VandenBergh15}. Here Connes' map $B$ factors through the canonical map $\HN_d(A)\to \HH_d(A)$. The {\em reduced} version of each type of complex and homology is obtained by applying the above constructions to the quotient mixed complex $\mathrm{M}(A)/\mathrm{M}(l)$. We have the reduced version of the ISB triangle as well. We refer the reader to section~6.1 of \cite{VandenBergh15} for the analogues of the above constructions for pseudocompact dg algebras (where sum total complexes have to be replaced with product total complexes). In the sequel, we use them freely. The reduced cyclic homology of completed dg tensor algebras can be calculated by the following proposition.

\begin{proposition} \label{prop:cyclic homology}
If the field $k$ is of characteristic $0$, then the reduced cyclic homology $\HC^{\red}_*(A)$ of the pseudocompact dg algebra $A=(T_l V,d)$ is isomorphic to 
\[
H^{-*}((A/(l+[A,A]))_l)\: .
\]
\end{proposition}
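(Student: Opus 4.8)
The plan is to evaluate the reduced cyclic complex $(M(A)/M(l))\lten_\La k$ by first replacing the mixed complex $M(A)$ with a small model adapted to the completed tensor algebra $A=T_l V$, and then using $\mathrm{char}(k)=0$ to collapse the resulting cyclic bicomplex onto its Hochschild-degree-zero column.

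First I would exploit the separability of $l$. As recalled in the excerpt, the $l$-coinvariant functor is then exact and each row of $B^+(A)_l$ is acyclic, so the Hochschild complex is quasi-isomorphic to $C(A)\liso A\lten_{A^e}A$. Since $A=T_l V$ is quasi-free over $l$, it carries the standard length-one bimodule resolution
\[
0\to A\ten_l V\ten_l A \xrightarrow{\iota} A\ten_l A \xrightarrow{m} A\to 0
\]
(completed in the pseudo-compact sense), where $m$ is the multiplication and $\iota(a\ten v\ten b)=av\ten b-a\ten vb$. Applying $A\lten_{A^e}-$ and using the canonical identifications $A\lten_{A^e}(A\ten_l A)\cong A_l$ (via $x\ten y\mapsto yx$) and $A\lten_{A^e}(A\ten_l V\ten_l A)\cong (V\ten_l A)_l$, I obtain that $HH(A)$ is computed by the two-term complex $(V\ten_l A)_l\to A_l$ whose map is induced by $\iota$ and becomes the commutator $v\ten w\mapsto [w,v]$, with internal differential inherited from $d$. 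Passing to the reduced theory (dividing by $M(l)$) removes the $l$-part of $A_l$, so that the cokernel in Hochschild degree zero is exactly $(A/(l+[A,A]))_l$.

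The main work is to upgrade this Hochschild computation to a mixed, hence cyclic, one. I would promote the short resolution to a small mixed complex $\widetilde M$ concentrated in Hochschild degrees $0$ and $1$ and quasi-isomorphic to $M(A)$, tracking Connes' operator $B$ through the identifications above. The decisive point is that the associated $(b,B)$-bicomplex is, in characteristic zero, contractible in every column beyond the first: the tensor algebra is graded by word length in $V$, and within each word length the cyclic symmetrisation together with division by the length (legitimate since $\mathrm{char}(k)=0$) furnishes an explicit contracting homotopy killing everything except the cyclic words, i.e. the image of $A$ in $A/(l+[A,A])$. Equivalently, this is the Feigin--Tsygan vanishing of reduced cyclic homology in positive degrees for quasi-free algebras. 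After forming $-\lten_\La k$, only the Hochschild-degree-zero part survives, so the reduced cyclic complex is quasi-isomorphic to $(A/(l+[A,A]))_l$ equipped solely with the internal differential. Reading off homology and converting the internal cohomological grading into the homological cyclic grading gives $HC^{red}_*(A)\cong H^{-*}((A/(l+[A,A]))_l)$.

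The hardest step is the middle one: a bimodule resolution a priori computes only Hochschild homology, so I must make the small model genuinely mixed, defining $B$ at the chain level compatibly with the quasi-isomorphism and then proving that the full cyclic bicomplex---not merely its Hochschild column---collapses in characteristic zero. Additional care is required with the completions: all sums must be replaced by products and one must check that the contracting homotopy is continuous and converges in the product topology. Finally, the outer $(\,\cdot\,)_l$ is in fact automatic here, since $[l,A]\subseteq [A,A]$ forces $[l,\,\cdot\,]$ to vanish on $A/(l+[A,A])$; I would remark on this so that the target is seen to be intrinsic.
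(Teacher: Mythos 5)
Your proposal follows essentially the same route as the paper: both reduce the reduced cyclic complex of the completed tensor algebra to a small two-column model and then use characteristic zero --- the averaging operator $\tfrac{1}{p}\sum_{i}\tau^{i}$ on words of length $p$, \ie the semisimplicity of $k(\Z/p\Z)$ --- to collapse everything onto the cyclic words $(A/(l+[A,A]))_l$ with its induced differential. The step you single out as hardest, namely upgrading the two-term bimodule resolution to a genuinely cyclic small model, is precisely what the paper outsources to the pseudo-compact variant of Proposition~3.1.5 of Loday, which directly identifies $HC^{red}(A)$ with the product total complex of the periodic sequence built from $b=1-\tau$ and $\gamma=N$ on $(A/l)_l$ and $(A\ten_l V)_l$.
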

\begin{proof}
By the variant of Proposition~3.1.5 of \cite{Loday98} for the reduced cyclic homology of pseudocompact dg algebras, the reduced cyclic homology $\HC^{\red}_*(A)$ is isomorphic to the homology of the product total complex of
\begin{equation} \label{eq:double complex 1}
\begin{tikzcd}
\cdots \arrow{r}{\gamma} & (A\ten_l V)_l \arrow{r}{b} & (A/l)_l \arrow{r}{\gamma} & (A\ten_l V)_l \arrow{r}{b} & (A/l)_l \arrow{r} & 0\: .
\end{tikzcd}
\end{equation}
Since $\gamma$ and $b$ are homogeneous with respect to the tensor degree, 
we can consider the homology of its rows for an arbitrary tensor degree $p$. It is
\begin{equation} \label{eq:complex}
\begin{tikzcd}
\cdots \arrow{r}{\gamma} & (V^{\ten_l p})_l \arrow{r}{b} & (V^{\ten_l p})_l \arrow{r}{\gamma} & (V^{\ten_l p})_l \arrow{r}{b} & (V^{\ten_l p})_l \arrow{r} & 0\: ,
\end{tikzcd}
\end{equation}
where $b$ maps $v_1\ldots v_p$ to 
\[
v_1\ldots v_p-(-1)^{(|v_1|+\cdots +|v_{p-1}|)|v_p|}v_p v_1\ldots v_{p-1}\: ,
\]
and $\gamma$ maps $v_1\ldots v_p$ to 
\[
\sum_{i=1}^p(-1)^{(|v_1|+\cdots +|v_{i}|)(|v_{i+1}|+\cdots +|v_p|)}v_{i+1}\ldots v_p v_1\dots v_i\: .
\]
Let $\tau$ be the cyclic permutation which maps $v_1\ldots v_p$ to $(-1)^{(|v_1|+\cdots +|v_{p-1}|)|v_p|}v_p v_1\ldots v_{p-1}$ and put $N=\sum_{i=0}^{p-1}\tau^i$. Then we have $b=\id-\tau$ and $\gamma=N$. Using the flat resolution
\[
\begin{tikzcd}
\cdots \arrow{r}{N} & k[\Z/p\Z] \arrow{r}{\id-\tau} & k[\Z/p\Z] \arrow{r}{N} & k[\Z/p\Z] \arrow{r}{\id-\tau} & k[\Z/p\Z] \arrow{r} & k \arrow{r} & 0
\end{tikzcd}
\]
of $k$ as a $k[\Z/p\Z]$-module we find that the complex~(\ref{eq:complex}) is just $k\lten_{k[\Z/p\Z]}(V^{\ten_l p})_l$. Since $k$ is of characteristic 0, 
the group algebra $k[\Z/p\Z]$ is semisimple, so that the homology of each row of $k\lten_{k[\Z/p\Z]}(V^{\ten_l p})_l$ vanishes in all negative degrees.
As a result, the product total complex of the double complex~(\ref{eq:double complex 1}) is quasi-isomorphic to 
\[
\begin{tikzcd}
(A/l)_l/\im(b\colon (A\ten_l V)_l \to (A/l)_l)\arrow{r}{\sim} & (A/(l+[A,A]))_l \: .
\end{tikzcd}
\]
Therefore, the reduced cyclic homology $\HC^{\red}_*(A)$ is isomorphic to $H^{-*}((A/(l+[A,A]))_l)$.
\end{proof}

Denote by $\Omega^1_l A$ the kernel of the multiplication map $A\ten_l A\to A$ which maps $a\ten b$ to $ab$. We define the operator $D\colon A\to \Omega^1_l A$ by $D(a)=a\ten 1-1\ten a$. For a dg $A$-bimodule $M$, we denote the quotient $M/[A,M]$ by $M_\natural$. We define the map $\del_0 \colon A_l\to (\Omega^1_l A)_\natural$ which maps $\ol{a}$ to $\ol{Da}$ and the map $\del_1 \colon (\Omega^1_l A)_\natural \to A_l$ which maps $\ol{aDb}$ to $\ol{[a,b]}$. For a completed dg tensor algebra, Connes' map $B$ can be calculated using the following proposition.

\begin{proposition} \label{prop:connecting morphism}
Let $A=(T_l V,d)$ be a pseudocompact dg algebra and $p$ an integer. If the field $k$ is of characteristic $0$, then Connes' map $B\colon \HC^{\red}_{p-1}(A)\to \HH^{\red}_p(A)$ identifies with
\begin{tikzcd}
\begin{bmatrix}
0 \\
-\del_0
\end{bmatrix}
\colon H^{1-p}((A/(l+[A,A]))_l)\arrow{r} & H^{-p}(\cone(\del_1 \colon (\Omega^1_l A)_\natural \to (A/l)_l))\: .
\end{tikzcd}
\end{proposition}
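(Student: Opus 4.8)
The plan is to compute $B$ through the small models for Hochschild and negative cyclic homology that are available because $A=(T_l V,d)$ is quasi-free relative to $l$, and to read off the connecting map from the factorization through $HN_d(A)$ mentioned above.

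First I would set up the small model for reduced Hochschild homology. The tautological short exact sequence of pseudo-compact $A$-bimodules
\[
0 \to \Omega^1_l A \to A\ten_l A \xrightarrow{m} A \to 0,
\]
where $m$ is the multiplication and $\Omega^1_l A=\ker m$, exhibits $A$ as quasi-isomorphic in $\cd(A^e)$ to the two-term complex $[\Omega^1_l A \to A\ten_l A]$ of relatively free bimodules. Applying $(-)_\natural = A\ten_{A^e}(-)$ termwise and using the identification $(A\ten_l A)_\natural \iso (A/l)_l$ (in the reduced normalization, via $\ol{x\ten y}\mapsto \ol{yx}$) together with $(\Omega^1_l A)_\natural$, one checks directly that the induced differential sends $\ol{aDb}$ to $\ol{[a,b]}$, that is, it is exactly $\del_1$. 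Hence $HH^{red}(A)$ is quasi-isomorphic to $\cone(\del_1\colon (\Omega^1_l A)_\natural \to (A/l)_l)$ and $HH^{red}_d(A)=H^{-d}(\cone\del_1)$. At this stage I would record the explicit comparison quasi-isomorphisms between this small complex and the reduced bar complex, which are the standard ones for tensor algebras.

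Next I would promote this to a small mixed complex. The operator $\del_0\colon (A/l)_l \to (\Omega^1_l A)_\natural$, $\ol a\mapsto \ol{Da}$, satisfies $\del_1\del_0=0$ and $\del_0\del_1=0$, so $[\,(\Omega^1_l A)_\natural \xrightarrow{\del_1}(A/l)_l\,]$ carries a mixed-complex structure with Connes operator $\del_0$. The key point is that, under the comparison maps of the previous paragraph, Connes' operator on the reduced bar complex corresponds to $\del_0$. This reduces to the computation on $0$-chains, where the normalized Connes formula gives $B(a)=1\ten a$, which under the identification of degree-one Hochschild chains with $\Omega^1_l A$ is precisely $Da$, i.e.\ $\del_0(\ol a)$; in particular the resulting class is a pure one-form, so its $(A/l)_l$-component vanishes.

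Finally I would compute the connecting morphism using the factorization $HC^{red}_{d-1}(A)\to HN^{red}_d(A)\to HH^{red}_d(A)$ through the canonical map $HN_d(A)\to HH_d(A)$. Representing a class of $HC^{red}_{d-1}(A)=H^{1-d}((A/(l+[A,A]))_l)$ (via Proposition~\ref{prop:cyclic homology}) by a cycle $\ol a$, I would lift it along the small negative cyclic complex $\RHom_\La(k,M(A))$ and then project to $HH^{red}_d(A)$; by the previous paragraph the net effect is the application of $\del_0$, landing in the $(\Omega^1_l A)_\natural$-summand of $\cone(\del_1)$ in internal degree $1-d$, with vanishing $(A/l)_l$-component. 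Tracking the sign introduced by the cone differential, equivalently by the connecting map, yields the factor $-1$ and hence the asserted matrix $\left[\begin{smallmatrix}0\\ -\del_0\end{smallmatrix}\right]$. The main obstacle is precisely this bookkeeping: arranging the comparison quasi-isomorphisms so that they are compatible with the full mixed structure and not merely with the Hochschild differential, matching all signs through Connes' operator and the cone convention, and verifying that the identification of $HC^{red}$ furnished by Proposition~\ref{prop:cyclic homology} is compatible with the chosen lift to negative cyclic homology.
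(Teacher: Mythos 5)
Your proposal rests on the same small models as the paper's proof --- the two-term complex $\del_1 \colon (\Omega^1_l A)_\natural \to (A/l)_l$ for $HH^{red}(A)$ and the identification of $HC^{red}(A)$ with $\cok\del_1 \iso (A/(l+[A,A]))_l$ via Proposition~\ref{prop:cyclic homology} --- but it extracts the connecting morphism differently. The paper simply observes (citing the variant of Proposition~7.2.1 of Van den Bergh's paper, which provides the small model for $HC^{red}(A)$ as the product total complex of the two-periodic $\del_0,\del_1$ diagram) that the reduced ISB triangle is induced by a graded split short exact sequence of these small complexes, so the connecting map is read off at the chain level as $-\del_0$; no mention of $HN$ is needed. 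You instead route the computation through the factorisation $HC^{red}_{d-1}(A)\to HN_d(A)\to HH^{red}_d(A)$, which is legitimate but buys you nothing here and costs you an extra layer of comparison: you must lift along the negative cyclic complex of the \emph{big} mixed complex $M(A)$ and then transport through the quasi-isomorphism to the small model, so the whole argument hinges on that comparison being one of mixed complexes.

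That hinge is the one genuine thin spot. Your claim that the compatibility of Connes' operator with $\del_0$ ``reduces to the computation on $0$-chains'' is not justified as stated: the comparison quasi-isomorphism between the bar-type mixed complex and the small complex involves all bar degrees, and $B$ raises bar degree, so a priori one must check the intertwining in every degree (this verification is precisely the content of the cited Proposition~7.2.1 of Van den Bergh, or of the standard $X$-complex comparison for quasi-free algebras). The reduction to bar degree $0$ can be salvaged, because the isomorphism of Proposition~\ref{prop:cyclic homology} represents every reduced cyclic class by an element of $(A/l)_l$ sitting in bar degree $0$ and the connecting map only needs to be evaluated on such representatives --- but that is an argument about where the classes live, not about the operator itself, and it needs to be said. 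Either make that point explicit, or do as the paper does and invoke the small model for $HC^{red}$ directly, after which the graded splitting of the ISB sequence gives $B=-\del_0$ with no detour through $HN$.
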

\begin{proof}
By the variant of Proposition~6.2 of \cite{VandenBergh15} for pseudocompact dg algebras, the complex $\HH^{\red}(A)$ is quasi-isomorphic to the product total complex of
\[
\begin{tikzcd}
(\Omega^1_l A)_\natural \arrow{r}{\del_1} & (A/l)_l
\end{tikzcd}
\]
and the complex $\HC^{\red}(A)$ is quasi-isomorphic to the product total complex of
\begin{equation} \label{eq:double complex 2}
\begin{tikzcd}
\cdots\arrow{r}{\del_0} & (\Omega^1_l A)_\natural \arrow{r}{\del_1} & (A/l)_l \arrow{r}{\del_0} & (\Omega^1_l A)_\natural \arrow{r}{\del_1} & (A/l)_l \: .
\end{tikzcd}
\end{equation}
If we use these complexes, the reduced ISB triangle
\[
\begin{tikzcd}
  \HH^{\red}(A)\arrow{r}{I} & \HC^{\red}(A)\arrow{r}{S} & \Si^2 \HC^{\red}(A)\arrow{r}{B} & \Si \HH^{\red}(A)
\end{tikzcd}
\]
is induced by a graded split sequence of complexes. Thus, Connes' map $B$ is given by
$-\del_0$. By Proposition~\ref{prop:cyclic homology}, the product total complex of the double complex ~(\ref{eq:double complex 2}) is quasi-isomorphic to $\cok \del_1 \iso (A/(l+[A,A]))_l$. Now the statement follows since the map $\del_0$ factors through $\cok \del_1$.
\end{proof}

We now recall the notions of absolute Calabi--Yau structures from section~3 of the original article \cite{BravDyckerhoff19}, section~4 of the article \cite{Yeung16} or section~10.3 of the survey article \cite{KellerWang23}. Fix an integer $d$. Let $A$ be a smooth pseudocompact dg $l$-algebra. A {\em (absolute left) $d$-Calabi--Yau structure} on $A$ is a class $[\tilde{\xi}]$ in $\HN_d(A)$ whose image $[\xi]$ under the canonical map $\HN_d(A)\to \HH_d(A)$ is {\em non-degenerate}, \ie the morphism $\Si^d A^\vee \to A$ in $\cd(A^e)$ obtained from $[\xi]$ via
\[
\begin{tikzcd}
\HH_d(A)\arrow{r}{\sim} & H^{-d}(\RHom_{A^e}^{\pc}(A^\vee,A))\arrow[no head]{r}{\sim} & \Hom_{\cd(A^e)}^{\pc}(\Si^d A^\vee,A)
\end{tikzcd}
\]
is an isomorphism, where we denote the derived bimodule dual $\RHom_{A^e}^{\pc}(A,A^e)$ of $A$ by $A^\vee$. An {\em exact (absolute left) $d$-Calabi--Yau structure} on $A$ is a class in $\HC_{d-1}(A)$ such that its image under the canonical map $\HC_{d-1}(A)\to \HN_d(A)$ is a $d$-Calabi--Yau structure on $A$.

Let $A$ be a proper dg $l$-algebra (not supposed to be pseudocompact). A {\em (absolute) right $d$-Calabi--Yau structure} on $A$ is a class $[\tilde{x}]$ in $D\HC_{-d}(A)$ whose image $[x]$ under the canonical map $D\HC_{-d}(A)\to D\HH_{-d}(A)$ is {\em non-degenerate}, \ie the morphism $A\to \Si^{-d}DA$ in $\cd(A^e)$ obtained from $[x]$ via 
\[
\begin{tikzcd}
D\HH_{-d}(A)\arrow[no head]{r}{\sim} & H^{-d}(\RHom_{A^e}(A, DA))\arrow[no head]{r}{\sim} & \Hom_{\cd(A^e)}(A,\Si^{-d}DA)
\end{tikzcd}
\]
is an isomorphism.

Recall that a $k$-linear $\Hom$-finite triangulated category $\cc$ is {\em $d$-Calabi--Yau} if it is endowed with bifunctorial bijections
\[
\Hom_{\cc}(X, Y) \xlongrightarrow{_\sim} D\Hom_{\cc}(Y, \Si^d X) \ko X, Y \in \cc \: .
\]

\begin{lemma} \label{lem:symplectic}
Suppose that $\cc$ is a $k$-linear $d$-Calabi--Yau triangulated category. Let $X$ be an object in $\cc$. Then the pseudocompact graded $k$-vector space $\Si^{-1}\D\Hom^*_{\cc}(X, X)$ carries a canonical symplectic form.
\end{lemma}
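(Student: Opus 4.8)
The plan is to transport the Serre--duality (Calabi--Yau) pairing on the graded endomorphism algebra $E:=\Hom^*_{\cc}(X,X)=\bigoplus_{n\in\Z}\Hom_{\cc}(X,\Si^n X)$ across the duality $\D$ and the \emph{odd} suspension $\Si^{-1}$, exploiting that an odd shift turns a graded-symmetric form into a graded-antisymmetric one. Since $\cc$ is $\Hom$-finite, each component $E^n$ is finite-dimensional; hence $\D E$ is a genuine (graded) object of $\PC k$, we have $\D\D E\cong E$, and the target $W:=\Si^{-1}\D E=\Si^{-1}\D\Hom^*_{\cc}(X,X)$ is pseudo-compact. This is what legitimises all the dualities used below.

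First I would make the pairing explicit. Evaluating the defining isomorphism $\Hom_{\cc}(X,Y)\iso D\Hom_{\cc}(Y,\Si^d X)$ at $Y=X$ and on $\id_X$ produces a \emph{trace} $\tr\colon E^d=\Hom_{\cc}(X,\Si^d X)\to k$. Bifunctoriality identifies the whole family of Calabi--Yau isomorphisms with the composition-and-trace pairing
\[
E^n\ten_k E^{d-n}\longrightarrow k,\qquad f\ten g\longmapsto \tr(\Si^n g\circ f),
\]
which I package as an isomorphism $\kappa\colon E\iso\Si^{-d}\D E$ of degree $0$. Non-degeneracy of $\kappa$ is exactly the Calabi--Yau axiom. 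The essential extra input is that this pairing is \emph{graded-symmetric},
\[
\tr(\Si^n g\circ f)=(-1)^{|f||g|}\,\tr(\Si^{d-n}f\circ g),
\]
which is the cyclicity of the Serre--duality trace: it follows from the naturality of the Calabi--Yau isomorphism in both variables together with the compatibility of the trace with $\Si$. In terms of $\kappa$ it says precisely that $\kappa$ agrees with its own transpose under the canonical identification $\D\D E\cong E$.

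Next I would produce $\omega$ by dualising and shifting. From $W=\Si^{-1}\D E$ one computes $\D W\cong\Si E$, so a non-degenerate bilinear form pairing $W^i$ with $W^{2-d-i}$ is the same datum as an isomorphism $W\iso\Si^{d-2}\D W$. Applying $\D$ and $\Si^{-1}$ to $\kappa$ yields exactly such an isomorphism, $\omega^\flat:=\Si^{-1}(\D\kappa)^{-1}\colon W\iso\Si^{-1}\Si^{d}E=\Si^{d-2}\D W$, and hence a non-degenerate form $\omega$ of degree $2-d$ on $W$. The graded-\emph{symmetry} of $\kappa$ forces $\omega^\flat$ to be graded-\emph{antisymmetric}: this is the décalage, the single odd suspension $\Si^{-1}$ contributing the extra sign $-1$ in the Koszul rule that converts `symmetric' into `antisymmetric'. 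Thus $\omega$ is a symplectic form, and it is canonical because the trace, the pairing, the duality and the shift are all canonical.

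The main obstacle is the sign bookkeeping. Upstream, one must verify the graded-symmetry (cyclicity) of the Serre--duality trace with the correct signs; downstream, one must track Koszul signs through $\Si^{-1}$, $\D$ and the identification $\D\D E\cong E$ in the décalage. The genuinely delicate point is the self-paired middle degree, which occurs when $d$ is even: there $W^{1-d/2}=D(E^{d/2})$ is paired with itself, so antisymmetry becomes an actual \emph{alternating} condition rather than a relation between two distinct graded pieces, and one has to check that the symmetric form on $E^{d/2}$ becomes honestly alternating on $W^{1-d/2}$ after the odd shift. Everything else in the argument is formal.
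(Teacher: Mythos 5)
Your argument is correct and is essentially the paper's own proof: the paper likewise starts from the canonical non-degenerate graded-symmetric pairing of degree $-d$ on $\Hom^*_{\cc}(X,X)$ induced by the Calabi--Yau structure (quoting Proposition~A.5.2 of Bocklandt, where you re-derive it via the trace $\tr(\Si^n g\circ f)$), then uses the single odd suspension to convert graded symmetry into graded antisymmetry by the same Koszul-sign computation, and finally passes to the graded dual $\Si^{-1}\D\Hom^*_{\cc}(X,X)$. The only difference is bookkeeping (the paper shifts first and dualises afterwards), so no further comment is needed.
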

\begin{proof}
By Proposition~A.5.2 of \cite{Bocklandt08}, for any object $X$ in $\cc$, the $d$-Calabi--Yau structure on $\cc$ yields a canonical non-degenerate and graded symmetric bilinear form 
$\langle?, -\rangle$ of degree $-d$ on the graded vector space $\Hom^*_{\cc}(X, X)$. We define the bilinear form $\langle?, -\rangle'$ on the graded vector space $\Si \Hom^*_{\cc}(X, X)$ to be the composition
\[
\langle?, -\rangle'=\langle?, -\rangle \circ(s^{-1}\ten s^{-1})\: .
\]
Then for any $f\in \Hom^i_{\cc}(X, X)$ and $g\in \Hom^{d-i}_{\cc}(X, X)$, we have
\begin{align*}
\langle sf, sg\rangle' &=\langle(s^{-1}\ten s^{-1})(sf, sg)\rangle \\
                       &=(-1)^{i-1}\langle f, g\rangle \\
                       &=(-1)^{i-1+i(d-i)}\langle g, f\rangle \\
                       &=(-1)^{d+i(d-i)}\langle(s^{-1}\ten s^{-1})(sg, sf)\rangle \\
                       &=-(-1)^{(i-1)(d-i-1)}\langle sg, sf\rangle'\: .
\end{align*}
This implies that the bilinear form $\langle?, -\rangle'$ on the graded vector space $\Si \Hom^*_{\cc}(X, X)$ is non-degenerate and graded anti-symmetric. Therefore, its graded dual
\[
\D \Si \Hom^*_{\cc}(X, X)=\Si^{-1}\D\Hom^*_{\cc}(X, X)
\]
also carries a canonical symplectic form.
\end{proof}

\begin{corollary} \label{cor:symplectic}
Suppose that $A$ is a (smooth) pseudocompact dg $l$-algebra which carries a $d$-Calabi--Yau structure. Let $X$ be an object in $\pvd A$. Then the pseudocompact graded $k$-vector space $\Si^{-1}\D\Ext^*_A(X, X)$ carries a canonical symplectic form.
\end{corollary}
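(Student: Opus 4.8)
The plan is to obtain the Corollary as a direct application of Lemma~\ref{lem:symplectic} to the triangulated category $\cc=\pvd A$. For this to be legitimate I must verify that $\pvd A$ is a $\Hom$-finite $k$-linear triangulated category and that the absolute left $d$-Calabi--Yau structure on $A$ upgrades $\pvd A$ to a $d$-Calabi--Yau triangulated category in the sense recalled just before Lemma~\ref{lem:symplectic}. Granting these, the conclusion is immediate: for $X\in\pvd A$ one has $\Hom^*_{\pvd A}(X,X)=\Ext^*_A(X,X)$, so the symplectic form produced by Lemma~\ref{lem:symplectic} lives precisely on $\Si^{-1}\D\Ext^*_A(X,X)$, as required.

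The finiteness input is free. Since $A$ is smooth, Proposition~\ref{prop:finite dimension}~b) shows that $\pvd A$ is $\Hom$-finite, while part~a) gives the inclusion $\pvd A\subseteq\per A$; the latter ensures that every object of $\pvd A$ is perfect, which I shall use below.

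For the Calabi--Yau input, I would start from the non-degenerate class $[\tilde\xi]\in HN_d(A)$ defining the structure. By the very definition of non-degeneracy, the image $[\xi]\in HH_d(A)$ yields an isomorphism of $A$-bimodules $\Si^d A^\vee\iso A$ in $\cd(A^e)$, where $A^\vee=\RHom^{pc}_{A^e}(A,A^e)$ is the inverse dualising bimodule. The idea is that this single bimodule isomorphism transports into a bifunctorial Serre-type pairing on $\pvd A$: for $X,Y\in\pvd A\subseteq\per A$ I would combine the tautological identity $Y\cong Y\lten_A A$, the isomorphism $A\iso\Si^d A^\vee$, and the duality implemented by $A^\vee$ between $\per A$ and $\pvd A$, so as to produce a bifunctorial isomorphism
\[
\Hom_{\cd(A)}(X,Y)\;\cong\;D\Hom_{\cd(A)}(Y,\Si^d X)\: .
\]
This is exactly the datum of a $d$-Calabi--Yau structure on $\pvd A$.

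The main obstacle is the honest construction of this last isomorphism in the pseudo-compact setting: one has to check that the inverse dualising bimodule $A^\vee$ genuinely implements Serre duality between $\per A$ and $\pvd A$ (with the correct degree shift), that the resulting pairing is bifunctorial in both variables, and that the identification $\Si^d A^\vee\iso A$ is compatible with it, so that the pairing is graded symmetric of the degree demanded by the hypotheses of Lemma~\ref{lem:symplectic}. This is the standard mechanism turning a non-degenerate Hochschild class of degree $d$ on a smooth dg algebra into a $d$-Calabi--Yau structure on its perfectly valued derived category; once it is established, the Corollary follows by the application of Lemma~\ref{lem:symplectic} described at the outset.
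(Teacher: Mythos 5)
Your proposal is correct and follows essentially the same route as the paper: the paper's proof simply cites the pseudo-compact variant of Lemma~4.1 of \cite{Keller08d} for the fact that the absolute $d$-Calabi--Yau structure on the smooth algebra $A$ makes $\pvd A$ into a $\Hom$-finite $d$-Calabi--Yau triangulated category, and then applies Lemma~\ref{lem:symplectic}. The Serre-duality mechanism you sketch (combining $\Si^d A^\vee \iso A$ with the duality implemented by the inverse dualising bimodule between $\per A$ and $\pvd A$) is exactly the content of that cited lemma, so you are supplying its proof rather than taking a different path.
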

\begin{proof}
The statement follows from the variant of Lemma~4.1 of \cite{Keller08d} for pseudocompact dg algebras and Lemma~\ref{lem:symplectic} immediately.
\end{proof}

The construction of the mixed complex $\mathrm{M}(A)$ is functorial with respect to (not necessarily unital!) morphisms between dg algebras as defined in section~\ref{ss:morphisms between pc algebras}. For a morphism $f\colon B\to A$ between dg algebras, its {\em relative mixed complex} $\mathrm{M}(A, B)$ is defined to be the cone of the induced morphism from $\mathrm{M}(B)$ to $\mathrm{M}(A)$. We define the {\em relative} versions of the Hochschild complex, the cyclic complex, the negative cyclic complex and the periodic cyclic complex by applying the above constructions to the mixed complex $\mathrm{M}(A, B)$ instead of $\mathrm{M}(A)$. With these definitions, we obtain the relative and the reduced relative versions of the ISB triangle as well. We proceed analogously for a morphism $B \to A$ between pseudocompact dg algebras.

We now succinctly recall the definition of relative Calabi--Yau structures. For more leisurely
accounts, we refer the reader to section~4 of the original article \cite{BravDyckerhoff19}, section~4 of the article \cite{Yeung16} or section~10.12 of the survey article \cite{KellerWang23}. Let $f\colon B\to A$ be a morphism between smooth pseudocompact dg algebras. We write $\mu \colon A\lten_B A \to A$ for the morphism in $\cd(A^e)$ corresponding to the morphism $f\colon B \to A$ in $\cd(B^e)$ under the bijection
\[
\begin{tikzcd}
\Hom_{\cd(A^e)}(A\lten_B A, A) \arrow{r}{\sim} & \Hom_{\cd(B^e)}(B, A) \: .
\end{tikzcd}
\]
A {\em relative (left) $d$-Calabi--Yau structure} on $f$ is a class $[(\tilde{\xi_A},s\tilde{\xi_B})]$ in $\HN_d(A,B)$ whose image $[(\xi_A,s\xi_B)]$ under the canonical map
\[
\HN_d(A,B)\longrightarrow \HH_d(A,B)
\]
is {\em non-degenerate}, \ie the morphism $[\hat{\xi_B}s^{1-d}]\colon \Si^{d-1}B^\vee \to B$ in $\cd(B^e)$ and the morphism
\[
\begin{tikzcd}
  \Si^{d-1}A^\vee\arrow{rr}{(-1)^{d-1}\Si^{d-1}\mu^\vee}\arrow{d}{\Si^{-1}[\hat{\xi}'']} & & \Si^{d-1}(A\lten_B A)^\vee\arrow{r}\arrow{d}{-\mathbf{L}f^{e*}[\hat{\xi_B}s^{1-d}]}& \Si^{d}(\cone \mu)^\vee\arrow{r}\arrow{d}{[\hat{\xi}']} & \Si^d A^\vee\arrow{d}{[\hat{\xi}'']}\\
  \Si^{-1}\cone \mu \arrow{rr} & & A\lten_{B}A\arrow[swap]{r}{\mu} & A\arrow{r} & \cone \mu
\end{tikzcd}
\]
of triangles in $\cd(A^e)$ obtained from $[(\xi_A,s\xi_B)]$ are isomorphisms, where these morphisms are constructed as follows: let us write $X(A)$ for a cofibrant resolution of $A$ as a pseudocompact dg $A$-bimodule and similarly for $B$. These resolutions allow us to describe the given classes using representatives. We denote the graded morphism of degree $-d$ corresponding to the representative $\xi_A\in A\ten_{A^e}X(A)$ by $\hat{\xi_A}\colon X(A)^\vee \to A$ and the (closed) morphism of degree $1-d$ corresponding to $\xi_B\in B\ten_{B^e}X(B)$ by $\hat{\xi_B}\colon X(B)^\vee \to B$. We use $\hat{\xi}'$ and $\hat{\xi}''$ to denote the morphisms
\[
\begin{bmatrix}
(-1)^d \mu\circ f^{e*}(\hat{\xi_B}s^{1-d}) & \hat{\xi_A}s^{-d}
\end{bmatrix}
\colon \Si^{d}(\cone \mu)^\vee \longrightarrow A
\]
respectively
\[
\begin{bmatrix}
(-1)^d \hat{\xi_A}s^{-d} \\
(-1)^d \Si f^{e*}(\hat{\xi_B}s^{1-d})\circ \Si^d\mu^\vee
\end{bmatrix}
\colon \Si^{d}X(A)^\vee \longrightarrow \cone \mu
\]
in $\cc(A^e)$ and use the identification $(A\lten_B A)^\vee \simeq A\lten_B B^\vee\lten_B A$, which holds since $B$ is perfect over $B^e$. In the notation $f^{e*}(\hat{\xi_B}s^{1-d})$, we use $f^{e*}$ to denote the induction functor $\cc(B^e) \to \cc(A^e)$. Note that the definition implies that the class $[\tilde{\xi_B}]$ in $\HN_{d-1}(B)$ is a \linebreak $(d-1)$-Calabi--Yau structure on $B$. If the dg algebra $B$ vanishes, then we recover the absolute notion. An {\em exact relative (left) $d$-Calabi--Yau structure} on $f$ is a class in $\HC_{d-1}(A,B)$ such that its image under the canonical map $\HC_{d-1}(A,B)\to \HN_d(A,B)$ is a relative $d$-Calabi--Yau structure on $f$.

Let $f\colon A\to B$ be a morphism between proper dg algebras (not supposed to be pseudocompact). A {\em relative right $d$-Calabi--Yau structure} on $f$ is a class $[(s\tilde{x_B},\tilde{x_A})]$ in \linebreak $D\HC_{1-d}(B,A)$ whose image $[(sx_B,x_A)]$ under the canonical map
\[
D\HC_{1-d}(B,A)\longrightarrow D\HH_{1-d}(B,A)
\]
is {\em non-degenerate}, \ie the morphism $[s^{1-d}\hat{x_B}]\colon B\to \Si^{1-d}DB$ in $\cd(B^e)$ and the morphism
\[
\begin{tikzcd}
  \cocone f \arrow{r}\arrow{d}{[\hat{x}'']}&A\arrow{r}{f}\arrow{d}{[\hat{x}']}&B\arrow{rr}\arrow{d}{(-1)^{d-1}\mathbf{R}f^e_*[s^{1-d}\hat{x_B}]}& &\Si\cocone f\arrow{d}{\Si[\hat{x}'']}\\
  \Si^{-d}DA\arrow{r}&\Si^{-d}D\cocone f\arrow{r}&\Si^{1-d}DB\arrow[swap]{rr}{(-1)^{1-d}\Si^{1-d}Df}& &\Si^{1-d}DA
\end{tikzcd}
\]
of triangles in $\cd(A^e)$ obtained from $[(sx_B,x_A)]$ are isomorphisms, where these morphisms are constructed as follows: let us write $X(A)$ for a cofibrant resolution of $A$ as a dg \linebreak $A$-bimodule and similarly for $B$. These resolutions allow us to describe the given classes using representatives. We denote the graded morphism of degree $-d$ corresponding to the representative $x_A\in D(A\ten_{A^e}X(A))$ by $\hat{x_A}\colon X(A) \to DA$ and the (closed) morphism of degree $1-d$ corresponding to $x_B\in D(B\ten_{B^e}X(B))$ by $\hat{x_B}\colon X(B) \to DB$. We use $\hat{x}'$ and $\hat{x}''$ to denote the morphisms
\[
\begin{bmatrix}
f^e_*(s^{1-d}\hat{x_B})\circ f \\
s^{-d}\hat{x_A}
\end{bmatrix}
\colon A \longrightarrow \Si^{-d}D\cocone f
\]
respectively
\[
\begin{bmatrix}
\Si^{-d}Df\circ \Si^{-1}f^e_*(s^{1-d}\hat{x_B}) & (-1)^d s^{-d}\hat{x_A}
\end{bmatrix}
\colon \cocone f \longrightarrow \Si^{-d}DA
\]
in $\cc(A^e)$. In the notation $f^e_*(s^{1-d}\hat{x_B})$, we use $f^e_*$ to denote the restriction functor \linebreak $\cc(B^e) \to \cc(A^e)$. Note that the definition implies that the class $[\tilde{x_B}]$ in $D\HC_{1-d}(B)$ is a right $(d-1)$-Calabi--Yau structure on $B$. If the dg algebra $B$ vanishes, then we recover the absolute notion.

A {\em $d$-Calabi--Yau structure on a cospan} in the sense of section~6 of \cite{BravDyckerhoff19}
\[
\begin{tikzcd}
   & B_1\arrow{d}{f_1} \\
  B_2\arrow{r}{f_2} & A
\end{tikzcd}
\]
for smooth pseudocompact dg algebras is a class $[(\tilde{\xi_A},s\tilde{\xi_{B_1}},s\tilde{\xi_{B_2}})]$ in
\[
H^{-d}(\cone(
\begin{bmatrix}
f_1 & -f_2
\end{bmatrix}
\colon \HN(B_1)\oplus \HN(B_2)\to \HN(A)))
\]
whose underlying Hochschild class $[(\xi_A,s\xi_{B_1},s\xi_{B_2})]$ is {\em non-degenerate}, \ie the morphisms $[\hat{\xi_{B_i}}s^{1-d}]\colon \Si^{d-1}B_i^\vee \to B_i$ in $\cd(B_i^e)$ obtained from $[\xi_{B_i}]$ are isomorphisms, where $i=1$, $2$, and the commutative diagram
\[
\begin{tikzcd}
  \Si^{d-1}A^\vee\arrow{rr}{(-1)^{d-1}\Si^{d-1}\mu^\vee}\arrow[swap]{d}{(-1)^{d-1}\Si^{d-1}\mu^\vee}& & \Si^{d-1}(A\lten_{B_1} A)^\vee\arrow{dr}{-\mathbf{L}f^{e*}[\hat{\xi_{B_1}}s^{1-d}]} & \\
  \Si^{d-1}(A\lten_{B_2} A)^\vee\arrow[swap]{drr}{-\mathbf{L}f^{e*}[\hat{\xi_{B_2}}s^{1-d}]} & & & A\lten_{B_1}A\arrow{d}{\mu} \\
  & & A\lten_{B_2}A\arrow[swap]{r}{\mu} & A
\end{tikzcd}
\]
in $\cd(A^e)$ obtained from $[(\xi_A,s\xi_{B_1},s\xi_{B_2})]$ is homotopy (co)Cartesian. In particular, the class $[\tilde{\xi_{B_i}}]$ in $\HN_{d-1}(B_i)$ is a $(d-1)$-Calabi--Yau structure on $B_i$, where $i=1$, $2$. If the dg algebra $B_2$ vanishes, then we recover the relative notion.

\begin{proposition} \label{prop:CY-cospan}
Suppose that $f_1\colon B_1 \to A$ and $f_2\colon B_2 \to A$ are morphisms between smooth pseudocompact dg algebras satisfying
\[
f_1(1_{B_1})\cdot f_2(1_{B_2})=0=f_2(1_{B_2})\cdot f_1(1_{B_1})\: .
\]
Then the class
$[(\tilde{\xi_A},s\tilde{\xi_{B_1}},s\tilde{\xi_{B_2}})]$ is a $d$-Calabi--Yau structure on the cospan
\[
\begin{tikzcd}
   & B_1\arrow{d}{f_1} \\
  B_2\arrow{r}{f_2} & A
\end{tikzcd}
\]
if and only if the class
$[(\tilde{\xi_{A}},s(\tilde{\xi_{B_1}}-\tilde{\xi_{B_2}}))]$ is a relative $d$-Calabi--Yau structure on the morphism
$\begin{bmatrix}
f_1 & f_2
\end{bmatrix}
\colon B_1\times B_2 \to A$.
\end{proposition}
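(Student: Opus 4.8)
The plan is to reduce both non-degeneracy conditions to the same pair of statements, exploiting that the hypothesis $f_1(1_{B_1})f_2(1_{B_2}) = 0 = f_2(1_{B_2})f_1(1_{B_1})$ is exactly what makes $g := \begin{bmatrix} f_1 & f_2\end{bmatrix}\colon B_1\times B_2\to A$ a morphism of dg algebras: it is equivalent to the vanishing of the cross terms in $g((b_1,b_2)(b_1',b_2'))$. Write $B = B_1\times B_2$. Since $B$ is a product algebra, $B\simeq B_1\oplus B_2$ in $\cd(B^e)$, each $B_i$ being supported on the diagonal block $B_i^e$ of $B^e = B_1^e\oplus(B_1\ten B_2\op)\oplus(B_2\ten B_1\op)\oplus B_2^e$. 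First I would read off the consequences of this on cyclic homology: one gets $HN(B)\simeq HN(B_1)\oplus HN(B_2)$, under which $M(g)$ becomes $M(f_1)\oplus M(f_2)$, so that the cone computing $HN_d(A,B)$ is identified with the cospan cone $\cone(\begin{bmatrix} f_1 & -f_2\end{bmatrix})$ via the automorphism $\id\oplus(-\id)$ of $HN(B_1)\oplus HN(B_2)$. This identification sends $(\tilde{\xi_A}, s\tilde{\xi_{B_1}}, s\tilde{\xi_{B_2}})$ to $(\tilde{\xi_A}, s(\tilde{\xi_{B_1}} - \tilde{\xi_{B_2}}))$, which matches the two classes in the statement.

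Next I would transport the decomposition to the bimodule level. Using $B\simeq B_1\oplus B_2$ in $\cd(B^e)$ together with the identity $A\lten_B A\simeq(A\lten_k A)\lten_{B^e}B$, one obtains a canonical isomorphism in $\cd(A^e)$
\[
A\lten_{B}A\;\simeq\;(A\lten_{B_1}A)\oplus(A\lten_{B_2}A)
\]
under which $\mu$ becomes $\begin{bmatrix}\mu_1 & \mu_2\end{bmatrix}$, where $\mu_i\colon A\lten_{B_i}A\to A$ is the multiplication; dually $B^\vee\simeq B_1^\vee\oplus B_2^\vee$ and $(A\lten_B A)^\vee\simeq(A\lten_{B_1}A)^\vee\oplus(A\lten_{B_2}A)^\vee$. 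In particular $\cone(\mu)\simeq\cone(\begin{bmatrix}\mu_1 & \mu_2\end{bmatrix})$, and under $\hat{\xi_B} = \hat{\xi_{B_1}}\oplus(-\hat{\xi_{B_2}})$ the source map $[\hat{\xi_B}s^{1-d}]$ is, up to the sign on the $B_2$-summand, the direct sum $[\hat{\xi_{B_1}}s^{1-d}]\oplus[\hat{\xi_{B_2}}s^{1-d}]$. Hence $[\hat{\xi_B}s^{1-d}]$ is an isomorphism in $\cd(B^e)$ if and only if both $[\hat{\xi_{B_i}}s^{1-d}]$ are isomorphisms in $\cd(B_i^e)$; this is the common `boundary' part of the two non-degeneracy conditions.

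It then remains to compare the `interior' conditions. For the relative structure I would argue that, once $[\hat{\xi_B}s^{1-d}]$ and hence $\mathbf{L}f^{e*}[\hat{\xi_B}s^{1-d}]$ is invertible, the triangulated five lemma applied to the displayed morphism of triangles shows that it consists of isomorphisms if and only if the single map $[\hat{\xi}'']\colon\Si^d A^\vee\to\cone(\mu)$ is an isomorphism. On the cospan side, the square is homotopy (co)Cartesian if and only if the comparison map from the homotopy pushout
\[
\cone\bigl(\Si^{d-1}A^\vee\xrightarrow{(\mu_1^\vee,\,-\mu_2^\vee)}(A\lten_{B_1}A)\oplus(A\lten_{B_2}A)\bigr)\longrightarrow A
\]
is an isomorphism, equivalently if and only if the induced connecting map realises an isomorphism $\Si^d A^\vee\simeq\cone(\begin{bmatrix}\mu_1 & \mu_2\end{bmatrix})$. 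By the isomorphism of the previous paragraph, both interior conditions assert the invertibility of one and the same map $\Si^d A^\vee\to\cone(\mu)$, so the two notions of non-degeneracy coincide and the proposition follows.

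The step I expect to be the main obstacle is the sign and map bookkeeping in this last identification: one must verify that the comparison map built from the cospan class $(\tilde{\xi_A}, s\tilde{\xi_{B_1}}, s\tilde{\xi_{B_2}})$ through the homotopy pushout agrees, under $A\lten_B A\simeq\bigoplus_i A\lten_{B_i}A$ and $\hat{\xi_B} = \hat{\xi_{B_1}}\oplus(-\hat{\xi_{B_2}})$, with the map $[\hat{\xi}'']$ produced from the relative class $(\tilde{\xi_A}, s(\tilde{\xi_{B_1}} - \tilde{\xi_{B_2}}))$ by the explicit formulas defining $\hat{\xi}'$ and $\hat{\xi}''$, including the signs $(-1)^{d-1}$ occurring there. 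Since the sign flip on the $B_2$-summand is an automorphism of all objects involved, it does not affect invertibility, so this bookkeeping only has to identify the two maps up to isomorphism rather than on the nose.
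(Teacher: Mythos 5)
Your proposal is correct and follows essentially the same route as the paper: both arguments hinge on the decomposition $A\lten_{B_1\times B_2}A\simeq(A\lten_{B_1}A)\oplus(A\lten_{B_2}A)$ forced by the orthogonality of the images of the units, the blockwise identification of the boundary conditions $[\hat{\xi_{B_i}}s^{1-d}]$, and the observation that the two interior non-degeneracy conditions amount to the invertibility of one and the same map (the paper phrases this by showing the cones of the two comparison maps~(\ref{eq:cospan duality}) and~(\ref{eq:relative duality}), each multiplied by $(-1)^d$, are isomorphic, while you phrase it via the five lemma and the homotopy-pushout characterisation of a (co)Cartesian square — an equivalent repackaging). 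The sign bookkeeping you flag is handled in the paper at the same level of detail, namely by noting it only needs to be carried out up to isomorphism.
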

\begin{proof}
On the one hand, the class
$[(\tilde{\xi_A},s\tilde{\xi_{B_1}},s\tilde{\xi_{B_2}})]$ is a $d$-Calabi--Yau structure on the cospan
\[
\begin{tikzcd}
   & B_1\arrow{d}{f_1} \\
  B_2\arrow{r}{f_2} & A
\end{tikzcd}
\]
if and only if the morphisms $[\hat{\xi_{B_i}}s^{1-d}]\colon \Si^{d-1}B_i^\vee \to B_i$ in $\cd(B_i^e)$, where $i=1$, $2$, and the morphism
\begin{equation} \label{eq:cospan duality}
\begin{bmatrix}
-\mu\circ f^{e*}(\hat{\xi_{B_1}}s^{1-d}) & (-1)^{d-1}\hat{\xi_A}s^{1-d} \\
0 & (-1)^d \Si^d\mu^\vee
\end{bmatrix}
\end{equation}
in $\cd(A^e)$ from the cone of
\[
(-1)^{d-1}\Si^{d-1}\mu^\vee \colon \Si^{d-1}A^\vee \longrightarrow \Si^{d-1}(A\lten_{B_1}A)^\vee
\]
to that of
\[
-\mu\circ \mathbf{L}f^{e*}[\hat{\xi_{B_2}}s^{1-d}]\colon \Si^{d-1}(A\lten_{B_2}A)^\vee \longrightarrow A
\]
are isomorphisms. On the other hand, the class $[(\tilde{\xi_{A}},s(\tilde{\xi_{B_1}}-\tilde{\xi_{B_2}}))]$ is a relative $d$-Calabi--Yau structure on the morphism
$\begin{bmatrix}
f_1 & f_2
\end{bmatrix}
\colon B_1\times B_2 \to A$ if and only if the morphism
\[
[\hat{\xi_{B_1}-\xi_{B_2}}s^{1-d}]\colon \Si^{d-1}(B_1\times B_2)^\vee \longrightarrow B_1\times B_2
\]
in $\cd((B_1\times B_2)^e)$ and the morphism
\begin{equation} \label{eq:relative duality}
\begin{bmatrix}
(-1)^d \mu\circ f^{e*}(\hat{\xi_{B_1}-\xi_{B_2}}s^{1-d}) & \hat{\xi_A}s^{-d}
\end{bmatrix}
\end{equation}
in $\cd(A^e)$ from the cone of
\[
(-1)^{d-1}\Si^{d-1}\mu^\vee \colon \Si^{d-1}A^\vee \longrightarrow \Si^{d-1}(A\lten_{B_1\times B_2}A)^\vee
\]
to $A$ is an isomorphism. The assumption $f_1(1_{B_1})\cdot f_2(1_{B_2})=0=f_2(1_{B_2})\cdot f_1(1_{B_1})$ implies that $A\lten_{B_1\times B_2}A$ is isomorphic to $(A\lten_{B_1}A)\oplus(A\lten_{B_2}A)$ in $\cd(A^e)$. Then the statement follows because the cone of the morphism~(\ref{eq:cospan duality}) and that of the morphism~(\ref{eq:relative duality}) multiplied by $(-1)^{d-1}$ are isomorphic.
\end{proof}

\subsection{The necklace bracket} \label{ss:the necklace bracket}

We recall some notions on noncommutative symplectic geometry from \cite{Kontsevich93, CrawleyBoeveyEtingofGinzburg07}. Suppose that $l$ is a finite-dimensional semisimple $k$-algebra and $A$ is a dg $l$-algebra. Recall that $\Omega^1_l A$ is the kernel of the multiplication map $A\ten_l A\to A$. Following section~9.1 of \cite{VandenBergh15}, the tensor algebra $T_A(\Omega^1_l A)$ is bigraded:
by definition, for $\omega$ in $\Omega^1_l A$, the degree derived from the internal degree (given by the grading on $A$) is denoted by $|\omega|$ and the `form degree' is defined as $|\!|\omega|\!|=1$ (if $\omega\neq 0$).
For two homogeneous elements $\omega$ and $\omega'$ of $T_A(\Omega^1_l A)$, we define the bigraded commutator by
\[
[\omega,\omega']=\omega \omega'-(-1)^{|\!|\omega'|\!||\!|\omega''|\!|+|\omega'||\omega''|}\omega'\omega \: .
\]
Let $\mathrm{DR}_l(A)$ be the bigraded vector space
\[
T_A(\Omega^1_l A)/[T_A(\Omega^1_l A), T_A(\Omega^1_l A)]\: .
\]
The differential on $A$ yields a differential of bidegree $(1,0)$ on $T_A(\Omega^1_l A)$ and on $\mathrm{DR}_l(A)$. This makes $T_A(\Omega^1_l A)$ into a differential bigraded $l$-algebra.

We extend the operator $D\colon A\to \Omega^1_l A$ which maps $a$ to $a\ten 1-1\ten a$ to an $l$-derivation $T_A(\Omega^1_l A)\to T_A(\Omega^1_l A)$ whose square vanishes. Clearly, it is of bidegree $(0,1)$ and commutes with $d$.
In this way, the tensor algebra $T_A(\Omega^1_l A)$ becomes a bidifferential 
bigraded \linebreak $l$-algebra. It is easy to see that $D$ descends to a $k$-linear endomorphism of $\mathrm{DR}_l(A)$ which is of bidegree $(0,1)$ 
and commutes with $d$. In particular, both $T_A(\Omega^1_l A)$ and $\mathrm{DR}_l(A)$ are double complexes.

Recall that a {\em double $l$-derivation} defined on $A$ is an $l^e$-linear
derivation defined on $A$ with values in the $A$-bimodule $A\ten_k A$.
For each double $l$-derivation $\delta$, we denote by $i_\delta$ the {\em contraction} associated with $\delta$, \ie the unique double $l$-derivation
\[
i_\delta \colon T_A(\Omega^1_l A) \longrightarrow T_A(\Omega^1_l A)\ten_k T_A(\Omega^1_l A)
\]
such that, for any $a$ in $A$, we have $i_\delta(a)=0$ and $i_\delta(Da)=\delta(a)$.
For any $\omega$ in $T_A(\Omega^1_l A)$, we let $\iota_\delta(\omega)$ be
the element of $T_A(\Omega^1_l A)$ defined by
\[
\iota_\delta(\omega)=
(-1)^{|i_\delta(\omega)'||i_\delta(\omega)''|}i_\delta(\omega)''i_\delta(\omega)'\: .
\]
Recall that following \cite{VandenBergh15}, by abuse of notation we write $u = u' \ten u''$ instead of \linebreak $u= \sum_i u'_i\ten u''_i$. Thus, the map $\iota_\delta$ is the composition of the graded opposite multiplication of $T_A(\Omega^1_l A)$ with $i_\delta$.

Recall that an element $\omega$ of form degree $2$ in $\mathrm{DR}_l(A)$ is {\em bisymplectic}
if it is closed for $D$ and the morphism
\[
\mathrm{Der}_{l}(A, A\ten_k A)\longrightarrow \Omega^1_l A
\]
of $A$-bimodules which maps $\delta$ to $\iota_\delta(\omega)$ is an isomorphism. Here we denote the graded vector space of $l$-bilinear derivations from $A$ to $A\ten_k A$ by $\mathrm{Der}_{l}(A, A\ten_k A)$.
For example, suppose that $V$ is a graded $l$-bimodule of finite total dimension and 
\[
A=T_l V = \prod_{p\geq 0}V^{\ten_l p}
\]
is the completed graded tensor algebra. Then, if the field $k$ is of characteristic $0$, for a non-degenerate and graded anti-symmetric element $\eta$ of $V\ten_{l^e} V$, we define $\omega_\eta$ to be $\frac{1}{2}(D\eta')(D\eta'')$. By section~9.1 of \cite{VandenBergh15}, the element $\omega_\eta$ is a bisymplectic form on $T_l V$.

Suppose that the element $\omega$ of $\mathrm{DR}_l(A)$ is bisymplectic. Following section~4.2 of \cite{CrawleyBoeveyEtingofGinzburg07}, for an element $a$ of $A$, we denote the corresponding {\em Hamiltonian vector field} by $H_a$, \ie the preimage of $Da$ under the above isomorphism $\mathrm{Der}_{l}(A, A\ten_k A)\to \Omega^1_l A$. Then, for elements $a$ and $b$ of $A$, we define the element $\ldb a, b\rdb_\omega$ of $A\ten_k A$ to be $H_a(b)$ and $\{a, b\}_\omega$ to be the image of $\ldb a, b\rdb_\omega$ under the multiplication. By Proposition~A.3.3 of \cite{VandenBergh08a}, 
the map $\ldb?,-\rdb_{\omega}$ is a double Poisson bracket.

Now let $V$ be a graded $l$-bimodule of finite total dimension. As above, denote the associated completed graded tensor algebra by
\[
A=T_l V = \prod_{p\geq 0}V^{\ten_l p}\: .
\]
We consider it as an $l$-augmented $l$-algebra. Suppose that $d\colon A\to A$ is a continuous $l^e$-linear differential making $A$ into a dg algebra.
Let $\omega \in \mathrm{DR}_l(A)$ be a bisymplectic element. Then we have the bracket
$\ldb?,-\rdb_{\omega}$ on $(A,d)$. Let $V'$ be another graded \linebreak $l$-bimodule of finite total dimension. It is easy to check that $\ldb?,-\rdb_\omega$ extends to a unique double Poisson bracket on $T_l(V\oplus V')$ such that we have $\ldb u,v\rdb_{\omega}=0$ if $u$ or $v$ lies in $V'$. By composing
with the multiplication of $T_l(V\oplus V')$ we obtain the corresponding {\em necklace bracket} $\{?, -\}_\omega$.

\subsection{$A_\infty$-algebras and $A_\infty$-modules} \label{ss:Ainfty-algebras and Ainfty-modules}

Following \cite{Keller01}, an {\em $A_\infty$-algebra} is a graded $k$-vector space $A$ endowed with $k$-linear maps $m_n\colon A^{\ten_k n}\to A$ of degree $2-n$, $n\geq 1$, satisfying
\[
\sum_{r+s+t=n}(-1)^{r+st}m_{r+1+t}\circ (\id^{\ten r}\ten m_s \ten \id^{\ten t})=0
\]
for all positive integers $n$, where $r$ and $t$ run through non-negative integers and $s$ through positive integers. It is {\em minimal} if $m_1$ vanishes. If $m_n$ vanishes for all $n>2$, then $d=m_1$ and $m_2$ make $A$ into a dg algebra.
A {\em morphism of $A_\infty$-algebras} $f\colon A\to B$ is a family of $k$-linear maps $f_n \colon A^{\ten_k n}\to B$ of degree $1-n$, $n\geq 1$, satisfying
\[
\sum_{r+s+t=n}(-1)^{r+st}f_{r+1+t}\circ (\id^{\ten r}\ten m_s \ten \id^{\ten t})=
\sum_{i_1+\cdots+i_r=n}(-1)^{\sum_{j=1}^{r-1}(r-j)(i_j-1)} m_r\circ (f_{i_1}\ten \cdots \ten f_{i_r})
\]
for all positive integers $n$, where on the left hand side $r$ and $t$ run through non-negative integers and $s$ through positive integers, on the right hand side $r$ and $i_j$ run through positive integers. For an $A_\infty$-algebra morphism $f$, it is a {\em quasi-isomorphism} if $f_1$ is a quasi-isomorphism, it is {\em strict} if $f_n$ vanishes for all $n>1$. For any $A_\infty$-algebra $A$, we have the minimal $A_\infty$-algebra structure on its homology $H^*(A)$ and a quasi-isomorphism  $H^*(A)\to A$ of $A_\infty$-algebras, \cf Theorem~1 of \cite{Kadeishvili80}. The composition $f\circ g$ of $A_\infty$-algebra morphisms is given by
\[
(f\circ g)_n=\sum_{i_1+\cdots+i_r=n}(-1)^{\sum_{j=1}^{r-1}(r-j)(i_j-1)}f_r\circ (g_{i_1}\ten \cdots \ten g_{i_r})\ko n\geq 1\: ,
\]
where $r$ and $i_j$ run through positive integers.

Let $A$ be an $A_\infty$-algebra. An {\em $A_\infty$-module} over $A$ is a graded $k$-vector space $M$ endowed with $k$-linear maps $m^M_n \colon M\ten_k A^{\ten_k (n-1)}\to M$ of degree $2-n$, $n\geq 1$, satisfying
\[
\sum_{r+s+t=n}(-1)^{r+st}m_{r+1+t}\circ (\id^{\ten r}\ten m_s \ten \id^{\ten t})=0
\]
for all positive integers $n$, where $r$ and $t$ run through non-negative integers and $s$ through positive integers. It is {\em minimal} if $m^M_1$ vanishes. A {\em morphism of $A_\infty$-modules} $f\colon L\to M$ is a family of $k$-linear maps $f_n \colon L\ten_k A^{\ten_k (n-1)}\to M$ of degree $1-n$, $n\geq 1$, satisfying
\[
\sum_{r+s+t=n}(-1)^{r+st}f_{r+1+t}\circ (\id^{\ten r}\ten m_s \ten \id^{\ten t})=
\sum_{r+s=n}(-1)^{(r+1)s} m_{1+s}\circ (f_r\ten \id^{\ten s})
\]
for all positive integers $n$, where on the left hand side $r$ and $t$ run through non-negative integers and $s$ through positive integers, on the right hand side $r$ runs through positive integers and $s$ through non-negative integers. Here we write $m_n$ for both $m_n$ and $m^M_n$. For an $A_\infty$-module morphism $f$, it is a {\em quasi-isomorphism} if $f_1$ is a quasi-isomorphism, it is {\em strict} if $f_n$ vanishes for all $n>1$. For any $A_\infty$-module $M$ over $A$, its homology $H^*(M)$ can be endowed with a structure of minimal $A_\infty$-module over $A$ 
such that there is a quasi-isomorphism $H^*(M)\to M$ of $A_\infty$-modules over $A$, \cf Theorem~2 of \cite{Kadeishvili80}. The composition $f\circ g$ of $A_\infty$-module morphisms is given by
\[
(f\circ g)_n=\sum_{r+s=n}(-1)^{(r-1)s}f_{1+s}\circ (g_r \ten \id^{\ten s})\ko n\geq 1\: ,
\]
where $r$ runs through positive integers and $s$ through non-negative integers.

\section{A Darboux theorem for relative Calabi--Yau structures}

\subsection{Ice quivers with potential} \label{ss:ice quivers with potential}

Let $k$ be a field and $Q=(Q_0,Q_1,s,t)$ a finite quiver. We write $e_i$ for the lazy
path at a vertex $i$ of $Q$.
Let $\tilde{Q}$ be the quiver obtained from $Q$ by adding an arrow $\alpha^*\colon j \to i$ for each arrow $\alpha\colon i \to j$. The {\em completed preprojective algebra} associated with the quiver $Q$ is the quotient of the completed path algebra $k\tilde{Q}$ by the closure of the ideal generated by
\[
\sum_\alpha e_i(\alpha \alpha^*-\alpha^* \alpha)e_i \ko i\in Q_0 \: ,
\]
where $\alpha$ runs through arrows of $Q$.
Two cycles of the same length of $Q$ are {\em cyclically equivalent} if they only differ by a rotation. A {\em potential} $W$ on $Q$ is a formal (possibly infinite) $k$-linear combination of cyclic equivalence classes of cycles which are of length at least $3$. For each arrow $\alpha$ of $Q$, we have the {\em cyclic derivative} $\del_{\alpha}\colon kQ/[kQ, kQ]\to kQ$ which maps the class of a cycle $p$ to
\[
\sum_{\{(u,v)\mid p=u\alpha v\}}vu\: ,
\]
where $u$ and $v$ run through paths in $Q$.
The {\em Jacobian algebra} associated with the quiver with potential $(Q,W)$ is the quotient of the completed path algebra $kQ$ by the closure of the ideal generated by $\del_{\alpha}W$, $\alpha \in Q_1$.
We consider $Q$ as a graded quiver concentrated in degree $0$. If $d$ equals $1$ and $Q_1$ is empty, or $d$ equals $2$ and $W$ vanishes, or $d$ equals $3$, let $\bar{Q}$ be the graded quiver obtained from $Q$ by adding an arrow $\alpha^*\colon j \to i$ of degree $2-d$ for each arrow $\alpha\colon i \to j$ and a loop $t_i$ of degree $1-d$ at each vertex $i$. The {\em $d$-dimensional Ginzburg dg algebra} associated with the quiver with potential $(Q,W)$ is the completed dg path algebra $k\bar{Q}$ with the differential determined by
\[
d(\alpha^*)=\del_{\alpha}W \quad \mbox{and}\quad d(t_i)=\sum_\alpha e_i(\alpha \alpha^*-\alpha^* \alpha)e_i\: ,
\]
where $\alpha$ runs through arrows of $Q$. Note that the homology of degree $0$ of the $2$-dimensional Ginzburg dg algebra is the completed preprojective algebra and the homology of degree $0$ of the $3$-dimensional Ginzburg dg algebra is the Jacobian algebra.

An {\em ice quiver} $(Q,F)$ is a quiver $Q$ with a frozen subquiver $F$ (which is not necessarily full). The vertices, respectively the arrows, in $F$ are called {\em frozen vertices}, respectively {\em frozen arrows}, and the vertices, respectively the arrows, not in $F$ are called {\em non-frozen vertices}, respectively {\em non-frozen arrows}.
For a finite ice quiver $(Q,F)$, let $\tilde{Q}_F$ be the quiver obtained from $Q$ by adding an arrow $\alpha^*\colon j \to i$ for each non-frozen arrow $\alpha\colon i \to j$. The {\em completed relative preprojective algebra} associated with the ice quiver $(Q,F)$ is the quotient of the completed path algebra $k\tilde{Q}_F$ by the closure of the ideal generated by
\[
\sum_\alpha e_i(\alpha \alpha^*-\alpha^* \alpha)e_i \ko i\in Q_0 \setminus F_0 \: ,
\]
where $\alpha$ runs through non-frozen arrows of $Q$.
A {\em potential} $W$ on $(Q,F)$ is a potential on $Q$. The {\em relative Jacobian algebra} associated with the ice quiver with potential $(Q,F,W)$ is the quotient of the completed path algebra $kQ$ by the closure of the ideal generated by $\del_{\alpha}W$, $\alpha \in Q_1 \setminus F_1$.
We consider $Q$ as a graded quiver concentrated in degree $0$. If $d$ equals $2$ and $W$ vanishes or $d$ equals $3$, let $\bar{Q}_F$ be the graded quiver obtained from $Q$ by adding an arrow $\alpha^*\colon j \to i$ of degree $2-d$ for each non-frozen arrow $\alpha\colon i \to j$ and a loop $t_i$ of degree $1-d$ at each non-frozen vertex $i$. The {\em $d$-dimensional relative Ginzburg dg algebra} associated with the ice quiver with potential $(Q,F,W)$ is the completed dg path algebra $k\bar{Q}_F$ with the differential determined by
\[
d(\alpha^*)=\del_{\alpha}W\quad \mbox{and}\quad d(t_i)=\sum_\alpha e_i(\alpha \alpha^*-\alpha^* \alpha)e_i \: ,
\]
where $\alpha$ runs through non-frozen arrows of $Q$. Note that the homology of degree $0$ of the $2$-dimensional relative Ginzburg dg algebra is the completed relative preprojective algebra and the homology of degree $0$ of the $3$-dimensional relative Ginzburg dg algebra is the relative Jacobian algebra.
The {\em $d$-dimensional Ginzburg morphism} associated with the ice quiver with potential $(Q,F,W)$ is the morphism from the $(d-1)$-dimensional Ginzburg dg algebra associated with the quiver with potential $(F,0)$ to the $d$-dimensional relative Ginzburg dg algebra associated with the ice quiver with potential $(Q,F,W)$ which maps $e_i$ to $e_i$ and $\alpha$ to $\alpha$ and $\alpha^*$ to $-\del_{\alpha}W$ and $t_i$ to $\sum_\alpha e_i(\alpha \alpha^*-\alpha^* \alpha)e_i$, where $\alpha$ runs through non-frozen arrows of $Q$.

\subsection{Ginzburg--Lazaroiu morphisms} \label{ss:Ginzburg-Lazaroiu morphisms}

We introduce a special class of augmented morphisms between pseudocompact dg algebras. From now on, we always suppose that the field $k$ is of characteristic $0$. Let $l_{\ol{A}}$ and $l_B$ be finite-dimensional semisimple $k$-algebras. Denote their product by $l_A$ and the canonical algebra injection $l_B \to l_A$ by $\varphi$.
Let $\si_{\ol{A}}$ and $\si_B$ be the Casimir elements associated with given traces on 
$l_{\ol{A}}$ respectively $l_B$. Denote their sum by $\si_A$. Let $d\geq 2$ be an integer. For a dg algebra $A$, we use the notation $\Tr(A)$ for the quotient complex $A/[A,A]$ of $A$ by the
subcomplex generated by graded commutators. Suppose that we are given a quintuple $(N,F,\eta,w_A,w_B)$ satisfying Assumptions~\ref{ass:quintuple} parts~a), b) and c) below.
\begin{assumptions} \label{ass:quintuple}\mbox{}
\begin{itemize}
\item[a)] $F$ is a pseudocompact graded $l_B$-bimodule of finite total dimension concentrated in degrees $[\frac{3-d}{2},0]$ and $N$ is a pseudocompact graded $l_A$-bimodule of finite total dimension concentrated in degrees $[2-d,0]$.
\item[b)] $\eta$ is a non-degenerate and graded anti-symmetric element of $N\ten_{l^e_A}N$ which is of degree $2-d$.
\end{itemize}
\end{assumptions}
\addtocounter{theorem}{-1}
We define $R=\Si^{d-3}DF$ and denote by $\eta_B$ the image of the identity $\id_{F}$ under the composed map
\[
\begin{tikzcd}
\Hom_k(F,F) & F\ten_k DF \arrow[swap]{l}{\sim} \arrow{r} & (F\oplus R)\ten_{l^e_B}(F\oplus R)\: ,
\end{tikzcd}
\]
where the first map is the canonical graded $k$-linear bijection and the second map maps $a\ten b$ to
\[
(-1)^{(d-3)|a|}a\ten s^{d-3}b-(-1)^{|a||b|}s^{d-3}b\ten a\: .
\]
Clearly, the element $\eta_B$ is non-degenerate and graded anti-symmetric of degree $3-d$.
\begin{assumptions} (continued)
\begin{itemize}
\item[c)] $w_B$ is an element of $\Tr(T_{l_B}(F\oplus R))$ which is of degree $4-d$ such that we have $\{w_B, w_B\}_{\omega_{\eta_B}}=0$ (\cf section~\ref{ss:the necklace bracket}), and $w_A$ is an element of $\Tr(T_{l_A}(F\oplus N))$ which is of degree $3-d$ such that 
the element $w_B+\{w_B,w_A\}_{\omega_{\eta_B}}+\frac{1}{2}\{w_A,w_A\}_{\omega_\eta}$ lies in the kernel of the canonical surjection $\Tr(T_{l_A}(F\oplus R\oplus N)) \to \Tr(T_{l_A}(F\oplus N))$.
\end{itemize}
\end{assumptions}
\begin{remark} We expect that the conditions in part~c) have a deformation-theoretic interpretation in the spirit of \cite{ThanhofferVandenBergh18}.
\end{remark}

The {\em $d$-dimensional Ginzburg--Lazaroiu morphism} associated with the above quintuple $(N,F,\eta,w_A,w_B)$ is the $\varphi$-augmented morphism
\[
\gamma \colon (T_{l_B}(F\oplus R\oplus z_B\, l_B),d)\longrightarrow (T_{l_A}(F\oplus N\oplus z_A\, l_{\ol{A}}),d)
\]
between pseudocompact dg algebras, where $z_B$ is an $l_B$-central indeterminate of degree $2-d$ and $z_A$ is an $l_A$-central indeterminate of degree $1-d$ whose annihilator is $l_B$. The topology of both pseudocompact graded algebras are the product topology. The unit and the augmentation maps of both augmented pseudocompact graded algebras are the natural ones. The differential of $T_{l_B}(F\oplus R\oplus z_B\, l_B)$ is determined by
\[
d(v)=\{w_B,v\}_{\omega_{\eta_B}}\mbox{ for all }v\in F\oplus R\quad \mbox{and}\quad d(z_B)=\si_B'\, \eta_B\, \si_B''\: .
\]
The differential of $T_{l_A}(F\oplus N\oplus z_A\, l_{\ol{A}})$ is determined by
\[
d(v)=\{w_B,v\}_{\omega_{\eta_B}}\mbox{ for all }v\in F\ko d(v)=\{w_A,v\}_{\omega_\eta}\mbox{ for all }v\in N\quad\mbox{and}\quad d(z_A)=\si_{\ol{A}}'\,\eta \,\si_{\ol{A}}''\: .
\]
Note that the differential of $v\in F$ lies in $T_{l_A}(F)$ for degree reasons.
The morphism $\gamma$ is determined by
\[
\gamma(v)=v\mbox{ for all }v\in F\ko \gamma(v)=-\{w_A,v\}_{\omega_{\eta_B}}\mbox{ for all }v\in R\quad \mbox{and}\quad \gamma(z_B)=\si_B'\, \eta \, \si_B''\: .
\]
Many examples of Ginzburg--Lazaroiu morphisms arise as deformed relative Calabi--Yau completions as introduced in \cite{Yeung16, Wu23a}. To check the algebras $(T_{l_A}(F\oplus N\oplus z_A\, l_{\ol{A}}),d)$ and $(T_{l_B}(F\oplus R\oplus z_B\, l_B),d)$ are honest dg algebras and the morphism $\gamma$ is an honest morphism between dg algebras, we need the following propositions and lemmas.

\begin{proposition} \label{prop:necklace b}
We have $d^2(v)=0$ in $(T_{l_B}(F\oplus R\oplus z_B\, l_B),d)$ for all $v$ in $F\oplus R$ if and only if we have $\{w_B,w_B\}_{\omega_{\eta_B}}=0$.
\end{proposition}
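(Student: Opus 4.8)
The plan is to reduce the computation to the subalgebra $A:=T_{l_B}(F\oplus R)$ and then to square the Hamiltonian derivation attached to $w_B$ using the Lie-theoretic formalism of the necklace bracket. First I would observe that for $v\in F\oplus R$ the element $d(v)=\{w_B,v\}_{\omega_{\eta_B}}$ is computed entirely inside $A$, since both $w_B$ and $v$ lie in $A$ and the bracket $\{?,-\}_{\omega_{\eta_B}}$ is defined on $A$; in particular no $z_B$ occurs. Hence $d$ restricts to the degree $+1$ derivation $\{w_B,-\}_{\omega_{\eta_B}}$ of $A$, and $d^2(v)=\{w_B,\{w_B,v\}_{\omega_{\eta_B}}\}_{\omega_{\eta_B}}$ for every $v\in F\oplus R$.

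Next I would invoke the structure theory of double Poisson brackets. Since $\ldb?,-\rdb_{\omega_{\eta_B}}$ is a double Poisson bracket (Proposition~A.3.3 of \cite{VandenBergh08a}), the associated single bracket descends to a graded Lie bracket on $\Tr(A)=A/[A,A]$, and the assignment $\bar a\mapsto\{\bar a,-\}_{\omega_{\eta_B}}$ is a morphism of graded Lie algebras from $\Tr(A)$ to the derivations of $A$ under the graded commutator, \cf \cite{VandenBergh08a, CrawleyBoeveyEtingofGinzburg07}. Evaluating this morphism on the pair $\bar a=\bar b=w_B$ and using that the resulting derivation $d|_A$ is odd (of degree $+1$), so that its graded self-commutator equals $[d,d]=2d^2$, I obtain the master-equation identity
\[
d^2 \;=\; \tfrac12\,\{\{w_B,w_B\}_{\omega_{\eta_B}},\,-\}_{\omega_{\eta_B}}\qquad\text{on }A.
\]
Concretely this is nothing but the graded Jacobi identity for the double bracket, and I expect the main obstacle to be precisely the bookkeeping of Koszul signs in the graded setting: one must check that the coefficient is exactly $\tfrac12$ and carries no parity-dependent sign. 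The equal-arguments specialisation is what makes this clean, because the derivation $\{w_B,-\}_{\omega_{\eta_B}}$ is odd irrespective of the parity of $4-d$.

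Granting the identity, both implications are immediate. If $\{w_B,w_B\}_{\omega_{\eta_B}}=0$, the right-hand side vanishes and so $d^2(v)=0$ for all $v\in F\oplus R$. Conversely, put $c:=\{w_B,w_B\}_{\omega_{\eta_B}}\in\Tr(A)$; the hypothesis $d^2(v)=0$ for all $v\in F\oplus R$ gives $\{c,v\}_{\omega_{\eta_B}}=0$ for every generator $v$. Here the non-degeneracy of $\eta_B$ enters: for the tensor algebra $A=T_{l_B}(F\oplus R)$ the single brackets $\{c,v\}_{\omega_{\eta_B}}$, with $v$ running over a basis of the generating bimodule $F\oplus R$, compute the cyclic derivatives of $c$ up to the non-degenerate relabelling induced by $\eta_B$, and these determine the class $Dc\in(\Omega^1_{l_B}A)_\natural$, whose kernel on $\Tr(A)$ consists only of the classes of constants. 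As $c$ is a bracket of elements of strictly positive tensor degree, it has no constant ($l_B$-)component, so $Dc=0$ forces $c=0$, i.e. $\{w_B,w_B\}_{\omega_{\eta_B}}=0$. This proves the equivalence.
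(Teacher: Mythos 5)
Your proof is correct and takes essentially the same route as the paper, which simply delegates both directions to \cite{VandenBergh15}: your master-equation identity $d^2=\tfrac12\{\{w_B,w_B\}_{\omega_{\eta_B}},-\}_{\omega_{\eta_B}}$ is equality~(10.6) there (and is the same left Loday computation that the paper writes out explicitly in the proofs of Propositions~\ref{prop:necklace a1} and \ref{prop:necklace a2}), while your converse via the non-degeneracy of $\eta_B$ and the injectivity, in characteristic $0$, of the cyclic derivative on $\Tr$ modulo constants is exactly the argument of the proof of Lemma~11.3.1 of \cite{VandenBergh15}. One caveat on the last step: since the double bracket lowers the total tensor degree by two, the justification that $c=\{w_B,w_B\}_{\omega_{\eta_B}}$ ``is a bracket of elements of strictly positive tensor degree and hence has no constant component'' does not stand as written --- the tensor-degree-zero part of $c$ is produced by the linear part of $w_B$; it is ruled out instead by the internal-degree count ($c$ has degree $2(4-d)-(3-d)=5-d$, while $l_B$ sits in degree $0$), except in the boundary case $d=5$, where one needs to know that $w_B$ has no linear term (as it does in the situation of part~i) of Theorem~\ref{thm:main}).
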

\begin{proof}
The sufficiency follows from the equality~(9.6) of \cite{VandenBergh15} and the necessity follows from the proof of Lemma~10.5 of \cite{VandenBergh15}.
\end{proof}

Note that if the equivalent conditions of the above Proposition~\ref{prop:necklace b} hold, 
then we have $d^2(v)=0$ in $(T_{l_A}(F\oplus N\oplus z_A\, l_{\ol{A}}),d)$ for all $v$ in $F$.

Recall that a graded left Loday algebra (\cf part~(3) of Proposition~1.4 of \cite{VandenBergh08a} for the terminology and section~1 of \cite{Loday93} for the definition) is a graded vector space $A$ endowed with a binary operation $\{?, -\}$ of degree $n$ satisfying
\[
\{a, \{b, c\}\}=\{\{a, b\}, c\}+(-1)^{(|a|+n)(|b|+n)}\{b, \{a, c\}\}
\]
for all $a$, $b$, $c\in A$. For the proof of the following two propositions, let us define a graded left Loday algebra structure on $T_{l_A}(F\oplus \Si R\oplus N)$ as follows:
we denote by $\eta_B^\Si$ the image of the identity $\id_{F}$ under the composed map
\[
\begin{tikzcd}
 \Hom_k(F,F) & F\ten_k F^* \arrow[swap]{l}{\sim} \arrow{r} & (F\oplus \Si R)\ten_{l^e_B}(F\oplus \Si R)\: ,
 \end{tikzcd}
\]
where the first map is the canonical graded $k$-linear bijection and the second map maps $a\ten b$ to
\[
(-1)^{(d-2)|a|}a\ten s^{d-2}b-(-1)^{|a||b|}s^{d-2}b\ten a\: .
\]
Clearly, the element $\eta_B^\Si$ is non-degenerate and graded anti-symmetric of degree $2-d$.
By Proposition~A.3.3 of \cite{VandenBergh08a}, the map $\ldb?,-\rdb_{\omega_{\eta_B^\Si+\eta}}$ is a double Poisson bracket on \linebreak $T_{l_A}(F\oplus \Si R\oplus N)$. By Corollary~2.4.4 of \cite{VandenBergh08a}, the bracket $\{?,-\}_{\omega_{\eta_B^\Si+\eta}}$ makes $T_{l_A}(F\oplus \Si R\oplus N)$ into a graded left Loday algebra.

\begin{proposition} \label{prop:necklace a1}
We have $d^2(v)=0$ in $(T_{l_A}(F\oplus N\oplus z_A\, l_{\ol{A}}),d)$ for all $v$ in $N$ if and only if the element $\{w_B,w_A\}_{\omega_{\eta_B}}+\frac{1}{2}\{w_A,w_A\}_{\omega_\eta}$ lies in the image of the canonical injection $\Tr(T_{l_A}F) \to \Tr(T_{l_A}(F\oplus N))$.
\end{proposition}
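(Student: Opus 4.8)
The plan is to realize the differential $d$ on the subalgebra $T_{l_A}(F\oplus N)$---which is all that is needed here, since $w_A\in\Tr(T_{l_A}(F\oplus N))$ forces $d(N)\subseteq T_{l_A}(F\oplus N)$---as a sum of two odd Hamiltonian derivations and to read off $d^2$ from the Leibniz rule of the necklace bracket, just as in Proposition~\ref{prop:necklace b}. Because the generators of $N$ are central for $\omega_{\eta_B}$ while those of $F$ are central for $\omega_\eta$, the differential splits as $d=\delta_A+\delta_B$, where $\delta_A=\{w_A,-\}_{\omega_\eta}$ (with $F$ acting centrally) and $\delta_B$ is the derivation determined by $\delta_B(v)=\{w_B,v\}_{\omega_{\eta_B}}\in T_{l_A}F$ for $v\in F$ and $\delta_B(v)=0$ for $v\in N$; both are of degree $+1$. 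Moreover $\delta_B^2=0$: on $F$ this is the Loday identity applied to $\{w_B,w_B\}_{\omega_{\eta_B}}=0$, which holds by Assumption~\ref{ass:quintuple}~c) (\cf Proposition~\ref{prop:necklace b}), and on $N$ it is trivial. Hence, for $v\in N$, where $\delta_B(v)=0$,
\[
d^2(v)=\delta_A^2(v)+\delta_B\delta_A(v).
\]

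Next I would evaluate the two summands. The first is the absolute contribution: since $\{?,-\}_{\omega_\eta}$ is a graded left Loday bracket on $T_{l_A}(F\oplus N)$ and $\{w_A,-\}_{\omega_\eta}$ is odd, the Leibniz identity gives $\delta_A^2(v)=\tfrac12\{\{w_A,w_A\}_{\omega_\eta},v\}_{\omega_\eta}$. For the mixed term I would use that $\delta_B$ acts only on the $F$-generators while $\{?,-\}_{\omega_\eta}$ couples only the $N$-generators, so $\delta_B$ is a derivation of the bracket $\{?,-\}_{\omega_\eta}$; together with $\delta_B(v)=0$ for $v\in N$ this yields $\delta_B\delta_A(v)=\delta_B\{w_A,v\}_{\omega_\eta}=\{\delta_B(w_A),v\}_{\omega_\eta}=\{\{w_B,w_A\}_{\omega_{\eta_B}},v\}_{\omega_\eta}$. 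Combining the two,
\[
d^2(v)=\bigl\{\{w_B,w_A\}_{\omega_{\eta_B}}+\tfrac12\{w_A,w_A\}_{\omega_\eta},\,v\bigr\}_{\omega_\eta}=\{c,v\}_{\omega_\eta},
\]
where $c$ denotes the class in the statement. The clean way to make the compatibility of the two brackets rigorous---and to keep track of the Koszul signs---is to run this computation inside the graded left Loday algebra $T_{l_A}(F\oplus \Si R\oplus N)$ with the combined form $\omega_{\eta_B^\Si+\eta}$ set up above: there $d$ is the single Hamiltonian derivation of $w_A$ together with the shift of $w_B$, the combined bracket splits as the sum of $\{?,-\}_{\omega_{\eta_B^\Si}}$ and $\{?,-\}_{\omega_\eta}$ on disjoint generators, and the single Leibniz identity produces all three pieces of $\tfrac12\{w,w\}$ at once, the purely $w_B$-piece vanishing again by Assumption~\ref{ass:quintuple}~c).

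Finally I would convert the vanishing of $d^2$ on $N$ into the stated membership. The map $\{c,-\}_{\omega_\eta}$ is a derivation of $T_{l_A}(F\oplus N)$ which already annihilates the $\omega_\eta$-central subalgebra $T_{l_A}F$; hence $d^2$ vanishes on all of $N$ if and only if $\{c,-\}_{\omega_\eta}$ vanishes identically, and, by the non-degeneracy of $\eta$ on $N$ granted in Assumption~\ref{ass:quintuple}~b), this occurs exactly when the trace class $c$ involves no generator of $N$, i.e. when $c$ lies in the image of $\Tr(T_{l_A}F)\to\Tr(T_{l_A}(F\oplus N))$. This is the asserted equivalence.

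The main obstacle is the mixed term $\delta_B\delta_A$: one must justify that the $\omega_{\eta_B}$-derivation $\delta_B$ genuinely commutes past the $\omega_\eta$-bracket and that $\delta_B(w_A)=\{w_B,w_A\}_{\omega_{\eta_B}}$ with the correct signs. This is precisely what the shifted form $\eta_B^\Si$ (matching the degree $3-d$ pairing $\eta_B$ with the degree $2-d$ pairing $\eta$) and the resulting single combined Loday structure are designed to control; the remaining steps are formal consequences of the Loday identity and of non-degeneracy.
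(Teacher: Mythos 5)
Your proposal is correct and follows essentially the same route as the paper: both split the differential into the two Hamiltonian derivations, use the graded left Loday identity (made rigorous via the combined bracket on $T_{l_A}(F\oplus \Si R\oplus N)$) to obtain $d^2(v)=\{\{w_B,w_A\}_{\omega_{\eta_B}}+\tfrac12\{w_A,w_A\}_{\omega_\eta},v\}_{\omega_\eta}$, and then read off the equivalence from the fact that $\omega_\eta$ couples only the $N$-generators, with the necessity resting on non-degeneracy of $\eta$ as in Lemma~11.3.1 of \cite{VandenBergh15}. Your explicit treatment of the mixed term $\delta_B\delta_A$ is a slightly more detailed rendering of the same computation, not a different argument.
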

\begin{proof}
Recall that $T_{l_A}(F\oplus \Si R\oplus N)$ is a graded left Loday algebra when endowed with the bracket $\{?,-\}_{\omega_{\eta_B^\Si+\eta}}$ and similarly for $T_{l_A}(F\oplus N)$ with the bracket $\{?,-\}_{\omega_\eta}$. Therefore, for any $v\in N$, we have
\begin{align*}
d^2(v) &=d(\{w_A,v\}_{\omega_\eta}) \\
       &=\{w_B,\{w_A,v\}_{\omega_\eta}\}_{\omega_{\eta_B}}+\{w_A,\{w_A,v\}_{\omega_\eta}\}_{\omega_\eta} \\
       &=\{\{w_B,w_A\}_{\omega_{\eta_B}},v\}_{\omega_\eta}+\frac{1}{2}\{\{w_A,w_A\}_{\omega_\eta},v\}_{\omega_\eta} \\
       &=\{\{w_B,w_A\}_{\omega_{\eta_B}}+\frac{1}{2}\{w_A,w_A\}_{\omega_\eta},v\}_{\omega_{\eta}} \: .
\end{align*}
This implies the sufficiency. The necessity follows from the same argument as in the proof of Lemma~10.5 of \cite{VandenBergh15}.
\end{proof}

\begin{proposition} \label{prop:necklace a2}
We have $(d\circ \gamma)(v)=(\gamma \circ d)(v)$ for all $v$ in $R$ if and only if the image of $w_B+\{w_B,w_A\}_{\omega_{\eta_B}}+\frac{1}{2}\{w_A,w_A\}_{\omega_\eta}$ under the canonical surjection
\[
\begin{tikzcd}
\Tr(T_{l_A}(F\oplus R\oplus N)) \arrow{r} & \Tr(T_{l_A}(F\oplus N))
\end{tikzcd}
\]
lies in the image of the canonical injection $\Tr(T_{l_A}N) \to \Tr(T_{l_A}(F\oplus N))$.
\end{proposition}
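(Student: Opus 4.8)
The plan is to verify the identity on the generators $v\in R$ by computing the two composites $(\gamma\circ d)(v)$ and $(d\circ\gamma)(v)$ separately and comparing them, in close analogy with the proof of Proposition~\ref{prop:necklace a1}. On the generators of the target the differential splits as a sum of two Hamiltonian derivations, $d=\{w_B,-\}_{\omega_{\eta_B}}+\{w_A,-\}_{\omega_\eta}$, which is legitimate since $\omega_{\eta_B}$ pairs only the generators in $F$ with those in $R$ while $\omega_\eta$ involves only $N$, so that $\{w_B,-\}_{\omega_{\eta_B}}$ annihilates $N$ and $\{w_A,-\}_{\omega_\eta}$ annihilates $F$. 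Using that $\gamma$ is the identity on $F$ and sends $v\in R$ to $-\{w_A,v\}_{\omega_{\eta_B}}$, I would first record, for $v\in R$,
\[
(\gamma\circ d)(v)=\gamma\big(\{w_B,v\}_{\omega_{\eta_B}}\big),\qquad (d\circ\gamma)(v)=-\{w_B,\{w_A,v\}_{\omega_{\eta_B}}\}_{\omega_{\eta_B}}-\{w_A,\{w_A,v\}_{\omega_{\eta_B}}\}_{\omega_\eta}.
\]

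Next I would transform the two nested brackets in $(d\circ\gamma)(v)$ inside the graded left Loday algebra $T_{l_A}(F\oplus\Si R\oplus N)$ for the combined bracket $\{?,-\}_{\omega_{\eta_B^\Si+\eta}}$ introduced before Proposition~\ref{prop:necklace a1}, reinterpreting the $R$-generators and the differentials through the shift $s\colon R\to\Si R$. The mixed nesting $\{w_A,\{w_A,v\}_{\omega_{\eta_B}}\}_{\omega_\eta}$ collapses, via the graded Jacobi identity for the combined bracket together with $\{w_A,v\}_{\omega_\eta}=0$ for $v\in R$, to $\tfrac12\{\{w_A,w_A\}_{\omega_\eta},v\}$, exactly as the factor $\tfrac12$ arises in Proposition~\ref{prop:necklace a1}; the pure $\omega_{\eta_B}$-nesting $\{w_B,\{w_A,v\}_{\omega_{\eta_B}}\}_{\omega_{\eta_B}}$ yields $\{\{w_B,w_A\}_{\omega_{\eta_B}},v\}_{\omega_{\eta_B}}$ plus a surviving cross term $\{w_A,\{w_B,v\}_{\omega_{\eta_B}}\}_{\omega_{\eta_B}}$ (it survives, unlike in Proposition~\ref{prop:necklace a1}, because $\{w_B,v\}_{\omega_{\eta_B}}$ is the nonzero source differential of $v$), while $\{w_B,w_B\}_{\omega_{\eta_B}}=0$ by Assumption~\ref{ass:quintuple}~c) and Proposition~\ref{prop:necklace b}. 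The key point is that this cross term recombines with $(\gamma\circ d)(v)=\gamma(\{w_B,v\}_{\omega_{\eta_B}})$: since $\gamma$ is precisely the substitution $r\mapsto-\{w_A,r\}_{\omega_{\eta_B}}$ on the $R$-letters, the difference $(\gamma\circ d-d\circ\gamma)(v)$ should assemble into the single bracket of
\[
w_B+\{w_B,w_A\}_{\omega_{\eta_B}}+\tfrac12\{w_A,w_A\}_{\omega_\eta}
\]
against $v$. The bare summand $w_B$ and the passage from $\Tr(T_{l_A}(F\oplus R\oplus N))$ to $\Tr(T_{l_A}(F\oplus N))$ are exactly the residue of this recombination: differentiating $w_B$ along the $R$-directions and applying $\gamma$ produces the $w_B$-contribution and simultaneously eliminates the $R$-letters, which is why the statement is phrased through the canonical projection killing $R$.

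Finally I would deduce the equivalence from the non-degeneracy of $\eta_B$: since $\omega_{\eta_B}$ realises $R$ as the dual of $F$, the assignment $v\mapsto\{E,v\}_{\omega_{\eta_B}}$, as $v$ ranges over $R$, recovers the full partial derivative along $F$ of any class $E$ once its $R$-letters have been removed. Hence the defect $(\gamma\circ d-d\circ\gamma)(v)$ vanishes for every $v\in R$ if and only if the image of $w_B+\{w_B,w_A\}_{\omega_{\eta_B}}+\tfrac12\{w_A,w_A\}_{\omega_\eta}$ in $\Tr(T_{l_A}(F\oplus N))$ has vanishing $F$-derivative, that is, lies in the image of $\Tr(T_{l_A}N)\to\Tr(T_{l_A}(F\oplus N))$. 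The necessity of this condition, i.e.\ the passage from the pointwise vanishing of a necklace bracket to membership in the image, is the same argument as in the proof of Lemma~11.3.1 of \cite{VandenBergh15} already invoked for Proposition~\ref{prop:necklace a1}.

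The main obstacle will be the sign- and shift-correct bookkeeping in the graded Loday manipulations when the two symplectic forms $\omega_{\eta_B}$ and $\omega_\eta$ are combined through the shift $s\colon R\to\Si R$, and, above all, the precise verification that the cross term $\{w_A,\{w_B,v\}_{\omega_{\eta_B}}\}_{\omega_{\eta_B}}$ recombines with $\gamma(\{w_B,v\}_{\omega_{\eta_B}})$ so as to produce \emph{exactly} the bare $w_B$ summand and the projection away from $R$; tracking the degree parity of $w_A$, which governs the coefficient $\tfrac12$, is the delicate point, just as in the proof of Proposition~\ref{prop:necklace a1}.
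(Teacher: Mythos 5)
Your proposal is correct and follows essentially the same route as the paper: both composites are computed on $v\in R$, the nested brackets are rewritten via the graded Loday (Jacobi) identity for the combined bracket on $T_{l_A}(F\oplus\Si R\oplus N)$, and the equivalence is concluded from the non-degeneracy of $\eta_B$, with necessity delegated to the argument of Lemma~11.3.1 of \cite{VandenBergh15}. The one ingredient you flag as delicate but do not spell out — why the cross term and $\gamma(\{w_B,v\}_{\omega_{\eta_B}})$ recombine to give exactly the bare $w_B$ summand after projecting away $R$ — is settled in the paper by observing that, for internal degree reasons, $w_B$ has $R$-degree at most $1$, so that $\gamma(\{w_B,v\}_{\omega_{\eta_B}})$ can be evaluated explicitly in the two cases and identified with the $R$-degree-zero component of $\{w_B,v\}_{\omega_{\eta_B}}-\{w_A,\{w_B,v\}_{\omega_{\eta_B}}\}_{\omega_{\eta_B}}$.
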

\begin{proof}
Recall that $T_{l_A}(F\oplus \Si R\oplus N)$ is a graded left Loday algebra when endowed with the bracket $\{?,-\}_{\omega_{\eta_B^\Si+\eta}}$ and similarly for $T_{l_A}(F\oplus R\oplus N)$ with the bracket $\{?,-\}_{\omega_{\eta_B}}$. Therefore, for any $v\in R$, we have
\begin{align*}
(d\circ \gamma)(v) &=d(-\{w_A,v\}_{\omega_{\eta_B}}) \\
                   &=-\{w_B,\{w_A,v\}_{\omega_{\eta_B}}\}_{\omega_{\eta_B}}-\{w_A,\{w_A,v\}_{\omega_{\eta_B}}\}_{\omega_\eta} \\
                   &=-\{\{w_B,w_A\}_{\omega_{\eta_B}},v\}_{\omega_{\eta_B}}-\{w_A,\{w_B,v\}_{\omega_{\eta_B}}\}_{\omega_{\eta_B}}-\frac{1}{2}\{\{w_A,w_A\}_{\omega_\eta},v\}_{\omega_{\eta_B}} \: .
\end{align*}
Clearly, the tensor algebra $T_{l_B}(F\oplus R)$ admits a unique $\N$-grading such that the elements of $F$ are of degree $0$ and the elements of $R$ are of degree $1$. We define the $R$-degree to be the degree with respect to this grading. The vector space $\Tr(T_{l_B}(F\oplus R))$ inherits the $R$-grading.
If $w_B$ is homogeneous of $R$-degree $0$, then we have
\[
(\gamma \circ d)(v)=\gamma(\{w_B,v\}_{\omega_{\eta_B}})=\{w_B,v\}_{\omega_{\eta_B}}
\]
for all $v$ in $R$. If $w_B$ is homogeneous of $R$-degree at least $1$, then
by considering the internal degree we see that it must be
homogeneous of $R$-degree $1$. In this case, we have
\[
(\gamma \circ d)(v)=\gamma(\{w_B,v\}_{\omega_{\eta_B}})=-\{w_A,\{w_B,v\}_{\omega_{\eta_B}}\}_{\omega_{\eta_B}}
\]
for all $v$ in $R$. In conclusion,
the element $(\gamma \circ d)(v)$ must equal the component of $R$-degree $0$ of
$\{w_B,v\}_{\omega_{\eta_B}}-\{w_A,\{w_B,v\}_{\omega_{\eta_B}}\}_{\omega_{\eta_B}}$ for the general case. Therefore, we have
\[
(d\circ \gamma)(v)=(\gamma \circ d)(v)
\]
for all $v$ in $R$ if and only if the component of $R$-degree $0$ of
\[
\{w_B+\{w_B,w_A\}_{\omega_{\eta_B}}+\frac{1}{2}\{w_A,w_A\}_{\omega_\eta},v\}_{\omega_{\eta_B}}
\]
vanishes for all $v$ in $R$. This implies the sufficiency. The necessity follows from the same argument as in the proof of Lemma~10.5 of \cite{VandenBergh15}.
\end{proof}

Note that the equivalent conditions in Propositions~\ref{prop:necklace b}, \ref{prop:necklace a1}, \ref{prop:necklace a2} hold if and only if the conditions in part~c) of the Assumptions~\ref{ass:quintuple} on the quintuple $(N,F,\eta,w_A,w_B)$ hold. If we have $d\leq 3$, then $w_B$ vanishes and the condition on $w_A$ automatically holds.

\begin{lemma}
We have $d^2(z_A)=0$ and $d^2(z_B)=0$.
\end{lemma}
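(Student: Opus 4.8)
The plan is to reduce both vanishings to a single fact, the Hamiltonian invariance of the noncommutative moment map. First I would unwind $d^2(z_B)$. By definition $d(z_B)=\si_B'\,\eta_B\,\si_B''$ is obtained from the degree-$2$ tensor $\eta_B\in(F\oplus R)\ten_{l^e_B}(F\oplus R)$ by multiplying on the left by $\si_B'$ and on the right by $\si_B''$; since $\si_B',\si_B''\in l_B$, it is a decomposable element of tensor degree $2$ in $T_{l_B}(F\oplus R)$. The differential $d$ is the unique continuous $l^e$-linear derivation restricting to $\{w_B,-\}_{\omega_{\eta_B}}$ on $F\oplus R$, and, because the single bracket $\{w_B,-\}_{\omega_{\eta_B}}$ is itself a derivation (the graded left Loday structure recalled in section~\ref{ss:the necklace bracket}), these two derivations agree on all of $T_{l_B}(F\oplus R)$. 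As $\si_B'$ and $\si_B''$ are $d$-closed, the Leibniz rule then gives $d^2(z_B)=\{w_B,\,\si_B'\,\eta_B\,\si_B''\}_{\omega_{\eta_B}}$. The identical argument, now using that $d$ restricts to $\{w_A,-\}_{\omega_\eta}$ on $N$ and that $\si_{\ol A}'\,\eta\,\si_{\ol A}''$ involves only the generators in $N$, yields $d^2(z_A)=\{w_A,\,\si_{\ol A}'\,\eta\,\si_{\ol A}''\}_{\omega_\eta}$; here one uses that $\{-,-\}_{\omega_\eta}$ vanishes on the $F$-variables, so that the $F$-part of $w_A$ does not intervene.

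It then remains to establish the key identity that, for a nondegenerate graded anti-symmetric element $\zeta$ with associated bisymplectic form $\omega_\zeta$ and moment map $\mu=\si'\,\zeta\,\si''$, one has $\{w,\mu\}_{\omega_\zeta}=0$ for every cyclic $w$, that is, for every class $w$ in $\Tr(T_l(F\oplus R))$, respectively $\Tr(T_l(F\oplus N))$. Granting this, taking $(\zeta,w)=(\eta_B,w_B)$ gives $d^2(z_B)=0$ and taking $(\zeta,w)=(\eta,w_A)$ gives $d^2(z_A)=0$, since $w_B$ and $w_A$ are cyclic by Assumptions~\ref{ass:quintuple}. When $d\le 3$ the element $w_B$ vanishes, so $d^2(z_B)=0$ is then immediate.

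For the key identity I would argue that $\{w,\mu\}_{\omega_\zeta}=H_w(\mu)$ telescopes to $0$, exactly as the classical cyclic identity $\sum_\alpha[\alpha,\del_\alpha W]=0$ does in the degree-$3$ quiver case: there $\mu=\sum_\alpha[\alpha,\alpha^*]$ and $d(\alpha^*)=-\del_\alpha W$, whence $d(\mu)=-\sum_\alpha[\alpha,\del_\alpha W]=0$. This is the statement that the moment map of a bisymplectic form is invariant under the Hamiltonian flow of any cyclic Hamiltonian, and it can be cited from the double Poisson formalism of \cite{VandenBergh08a} or read off from the proof of Lemma~11.3.1 of \cite{VandenBergh15}. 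The main obstacle is purely the bookkeeping of the graded Koszul signs coming from the shift $s^{d-3}$ entering $\eta_B$ and from $\eta$ itself: one must check that these signs are compatible with the graded anti-symmetry of $\omega_\zeta$, so that the pairwise cancellation in the telescoping sum persists in every dimension $d\ge 2$.
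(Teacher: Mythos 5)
Your argument follows essentially the same route as the paper: both reduce the claim to the moment-map identity $\{w,\si'\,\zeta\,\si''\}_{\omega_\zeta}=0$ for cyclic $w$, which the paper simply cites as Lemma~10.1.2 of \cite{VandenBergh15} for $z_A$ and section~10.2 of \cite{VandenBergh15} for $z_B$, rather than re-deriving it by telescoping. The only detail you gloss over is that $d(z_A)=\si_{\ol{A}}'\,\eta\,\si_{\ol{A}}''$ involves the Casimir of the factor $l_{\ol{A}}$ rather than that of $l_A$, so after obtaining $\{w_A,\si_A'\,\eta\,\si_A''\}_{\omega_\eta}=0$ one must still multiply by the central idempotent $1_{l_{\ol{A}}}$ on both sides, as the paper does.
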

\begin{proof}
The second equality follows from section~9.2 of \cite{VandenBergh15}. To prove the first one, note that we have
\[
d^2(z_A)=d(\si_{\ol{A}}'\, \eta \, \si_{\ol{A}}'')=\{w_A,\si_{\ol{A}}'\, \eta \, \si_{\ol{A}}''\}_{\omega_\eta}\: .
\]
By Lemma~9.2 of \cite{VandenBergh15} (which also holds for $w_A\in T_{l_A}(F\oplus N\oplus z_A\, l_{\ol{A}})$), we have
\[
\{w_A,\si_A'\, \eta\, \si_A''\}_{\omega_\eta}=0 \: .
\]
If we multiply by $1_{l_{\ol{A}}}$ from both sides, we obtain $\{w_A,\si_{\ol{A}}'\, \eta \, \si_{\ol{A}}''\}_{\omega_\eta}=0$.
\end{proof}

For a graded tensor algebra $T_l V$, we denote by $\sym \colon \Tr(T_l V)\to (T_l V)_l$ the cyclic symmetrisation map which vanishes on $l$ and maps the class of an element $a_1\ten\cdots\ten a_n$, where $a_i$ lie in $V$, to the element
\[
\sum_{1\leq i\leq n}\pm a_i\ten\cdots\ten a_n\ten a_1\ten\cdots\ten a_{i-1}\: .
\]
Here the signs are given by the Koszul sign rule. Denote by $\ol{\eta_B}$ the image of $\eta_B$ under the canonical projection to $F\ten_{l^e_B}R$. It is non-degenerate.

\begin{lemma}
We have $(d\circ \gamma)(v)=(\gamma \circ d)(v)$ for all $v$ in $F$ and $(d\circ \gamma)(z_B)=(\gamma \circ d)(z_B)$.
\end{lemma}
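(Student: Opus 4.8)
The plan is to check the chain-map equation on the two families of generators of the source not yet treated, namely on $F$ and on $z_B$; the case of $R$ was settled in Proposition~\ref{prop:necklace a2}.

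For $v\in F$ we have $\gamma(v)=v$, so that $(d\circ\gamma)(v)=d(v)=\{w_B,v\}_{\omega_{\eta_B}}$ evaluated in the target, while $(\gamma\circ d)(v)=\gamma(\{w_B,v\}_{\omega_{\eta_B}})$ with the bracket evaluated in the source. As in the proof of Proposition~\ref{prop:necklace a2}, I would first note that $w_B$ is homogeneous of $R$-degree $0$ or $1$ for internal-degree reasons. Since $\{?,-\}_{\omega_{\eta_B}}$ pairs $F$ with $R$, bracketing $w_B$ with an element of $F$ lowers the $R$-degree by one, so $\{w_B,v\}_{\omega_{\eta_B}}$ lies in $T_{l_B}(F)$, respectively $T_{l_A}(F)$. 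Because $\gamma$ restricts to the identity on $F$, and hence on $T_{l_B}(F)$, the two sides coincide, which settles the first equation.

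For $z_B$ the two sides read $(\gamma\circ d)(z_B)=\gamma(\si_B'\,\eta_B\,\si_B'')=\si_B'\,\gamma(\eta_B)\,\si_B''$ and, using that $\si_B$ lies in $l_B$ and is annihilated by the differential and that $d|_N=\{w_A,-\}_{\omega_\eta}$ is a graded derivation, $(d\circ\gamma)(z_B)=d(\si_B'\,\eta\,\si_B'')=\{w_A,\si_B'\,\eta\,\si_B''\}_{\omega_\eta}$. I would then evaluate $\gamma(\eta_B)$ from $\gamma|_F=\id$ and $\gamma(r)=-\{w_A,r\}_{\omega_{\eta_B}}$ for $r\in R$; since $w_A$ has no $R$-component we have $\{w_A,f\}_{\omega_{\eta_B}}=0$ for $f\in F$, so each summand of $\gamma(\eta_B)$ can be rewritten as a single bracket $-\{w_A,\cdot\}_{\omega_{\eta_B}}$. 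Passing to $T_{l_A}(F\oplus\Si R\oplus N)$ with the combined Loday bracket $\{?,-\}_{\omega_{\eta_B^\Si+\eta}}$ of degree $2-d$ constructed before Proposition~\ref{prop:necklace a1}, and using the symmetrisation map $\sym$ together with the non-degenerate pairing $\ol{\eta_B}$, I would identify $\si_B'\,\gamma(\eta_B)\,\si_B''$ with $-\{w_A,\si_B'\,\eta_B^\Si\,\si_B''\}_{\omega_{\eta_B^\Si+\eta}}$ up to a sign to be fixed in the bookkeeping, the shift $s$ accounting for the passage from $\omega_{\eta_B}$ on $R$ to $\omega_{\eta_B^\Si}$ on $\Si R$.

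Finally I would invoke the master identity $\{w_A,\si_A'(\eta_B^\Si+\eta)\si_A''\}_{\omega_{\eta_B^\Si+\eta}}=0$, which is Lemma~10.1.2 of \cite{VandenBergh15} for the combined bisymplectic form and holds here by the same remark already used in the previous lemma. Multiplying it by $1_{l_B}$ on both sides makes the $\si_{\ol{A}}$-contribution vanish, since $1_{l_B}\si_{\ol{A}}'=0$, and turns it into $\{w_A,\si_B'\,\eta_B^\Si\,\si_B''\}_{\omega_{\eta_B^\Si+\eta}}+\{w_A,\si_B'\,\eta\,\si_B''\}_{\omega_{\eta_B^\Si+\eta}}=0$; as the mixed terms of the combined bracket drop out for support reasons, the two summands reduce to $\{w_A,\si_B'\,\eta_B^\Si\,\si_B''\}_{\omega_{\eta_B^\Si}}$ and $\{w_A,\si_B'\,\eta\,\si_B''\}_{\omega_\eta}$, and combining with the previous identification yields exactly $\si_B'\,\gamma(\eta_B)\,\si_B''=\{w_A,\si_B'\,\eta\,\si_B''\}_{\omega_\eta}$. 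I expect the main obstacle to be precisely this sign and shift bookkeeping: reconciling the form $\omega_{\eta_B}$ defining $\gamma|_R$ with the form $\omega_\eta$ governing $d|_N$ inside the single combined bracket, and tracking the Koszul signs produced by $\sym$, by the antisymmetry of $\eta_B$, and by the shift between $R$ and $\Si R$.
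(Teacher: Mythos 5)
Your handling of the first identity matches the paper's (which simply declares it ``clear'', having already observed that $d(v)\in T_{l_A}(F)$ for $v\in F$ by a degree argument; your $R$-degree argument is an acceptable substitute). For the identity on $z_B$, however, you take a genuinely different route. The paper's proof writes the cyclic symmetrisation $\sym(w_A)$ in two ways, as $-\ol{\eta_B}'\{w_A,\ol{\eta_B}''\}_{\omega_{\eta_B}}+(-1)^{|\eta'|+1}\eta'\{w_A,\eta''\}_{\omega_\eta}$ and as its rotation, and reads off the single key relation $d(\eta)=\gamma(\eta_B)$ from the cyclic invariance of each tensor-degree component; the conclusion then follows from the chain $(d\circ\gamma)(z_B)=d(d(z_A)+\gamma(z_B))=d(\si_A'\,\eta\,\si_A'')=\gamma(\si_A'\,\eta_B\,\si_A'')=(\gamma\circ d)(z_B)$, which quietly reuses $d^2(z_A)=0$. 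You instead package the same underlying fact --- the cyclic invariance of $w_A$ --- as the vanishing $\{w_A,\si_A'(\eta_B^\Si+\eta)\si_A''\}_{\omega_{\eta_B^\Si+\eta}}=0$ for the combined bisymplectic form on $T_{l_A}(F\oplus\Si R\oplus N)$, cut down by $1_{l_B}$, and then split according to the block decomposition of the form. This is legitimate, and in fact cleaner in one respect: here $w_A$ lies in $T_{l_A}(F\oplus N)$, which sits inside the tensor algebra on the full symplectic space $F\oplus\Si R\oplus N$, so Lemma~10.1.2 of \cite{VandenBergh15} applies in its native setting and no extension to a larger algebra is needed. Note that neither block identity holds separately (each equals $\mp\si_B'\,\gamma(\eta_B)\,\si_B''$, not zero), so the argument is not vacuous; only the sum vanishes. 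What your write-up buys is a conceptual reading (the moment map of the combined form is annihilated by $\{w_A,-\}$) at the price of routing everything through the shifted form $\eta_B^\Si$, whereas the paper's two-line computation with $\sym(w_A)$ is self-contained and stays with $\eta_B$ throughout.

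The one genuine incompleteness is the step you yourself flag: the identification $\si_B'\,\gamma(\eta_B)\,\si_B''=-\{w_A,\si_B'\,\eta_B^\Si\,\si_B''\}_{\omega_{\eta_B^\Si}}$ is asserted only ``up to a sign to be fixed'', and the comparison of $\{w_A,-\}_{\omega_{\eta_B}}$ on $R$ with $\{w_A,-\}_{\omega_{\eta_B^\Si}}$ on $\Si R$ does produce Koszul signs (one for the suspension $s$ on each $R$-letter, one from the antisymmetrisation in $\eta_B$ versus $\ol{\eta_B}$). These do reconcile, but until they are written out the proof is an outline rather than a complete argument; the paper's version faces an equivalent bookkeeping task and carries it out explicitly via the two displayed formulas for $\sym(w_A)$.
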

\begin{proof}
The first statement is clear. To prove the second one, since the component of each tensor degree of the element
\[
\sym(w_A)=-\ol{\eta_B}'\{w_A,\ol{\eta_B}''\}_{\omega_{\eta_B}}+(-1)^{|\eta'|+1}\eta' \{w_A,\eta''\}_{\omega_\eta}
\]
of $(T_{l_A}(F\oplus N))_{l_A}$ is stable under the corresponding cyclic permutation group, we have
\[
\sym(w_A)=-(-1)^{|\ol{\eta_B}'||\ol{\eta_B}''|}\{w_A,\ol{\eta_B}''\}_{\omega_{\eta_B}}\ol{\eta_B}'+(-1)^{|\eta'||\eta''|+1}\{w_A,\eta''\}_{\omega_\eta}\eta'\: .
\]
Let us show that the difference of the right hand sides equals $\gamma(\eta_B)-d(\eta)$, which therefore has to vanish. Indeed, the difference equals
\begin{align*}
 & -\ol{\eta_B}'\{w_A,\ol{\eta_B}''\}_{\omega_{\eta_B}}+(-1)^{|\ol{\eta_B}'||\ol{\eta_B}''|}\{w_A,\ol{\eta_B}''\}_{\omega_{\eta_B}}\ol{\eta_B}' \\
 & -(-1)^{|\eta'||\eta''|+1}\{w_A,\eta''\}_{\omega_\eta}\eta'+(-1)^{|\eta'|+1}\eta' \{w_A,\eta''\}_{\omega_\eta} \: .
\end{align*}
The first line equals $\gamma(\eta_B)$ because we have
\begin{align*}
\gamma(\eta_B) & =\gamma(\ol{\eta_B}'\ol{\eta_B}''-(-1)^{|\ol{\eta_B}'||\ol{\eta_B}''|}\ol{\eta_B}''\ol{\eta_B}') \\
               & =-\ol{\eta_B}'\{w_A,\ol{\eta_B}''\}_{\omega_{\eta_B}}+(-1)^{|\ol{\eta_B}'||\ol{\eta_B}''|}\{w_A,\ol{\eta_B}''\}_{\omega_{\eta_B}}\ol{\eta_B}' \: .
\end{align*}
The second line equals $-d(\eta)$ because we have
\begin{align*}
d(\eta) & =d(\eta' \eta'') \\
        & =d(\eta')\eta''+(-1)^{|\eta'|}\eta'd(\eta'') \\
        & =(-1)^{|\eta'||\eta''|+1}d(\eta'')\eta'+(-1)^{|\eta'|}\eta'd(\eta'') \\
        & =(-1)^{|\eta'||\eta''|+1}\{w_A,\eta''\}_{\omega_\eta}\eta'+(-1)^{|\eta'|}\eta' \{w_A,\eta''\}_{\omega_\eta} \: .
\end{align*}
Therefore, we have
\[
(d\circ \gamma)(z_B)=d(d(z_A)+\gamma(z_B))=d(\si_A'\, \eta \, \si_A'')=\gamma(\si_A'\, \eta_B \, \si_A'')=(\gamma \circ d)(z_B)\: .
\]
\end{proof}

Our intuition comes from the {\em ice quiver case}, \ie the special case of the
above setting when the algebras $l_A=\prod_{i=1}^n ke_i$ and $l_B=\prod_{i=1}^m ke_i$ are finite products of copies of the ground field $k$ for some integers $n\geq m$. We choose the Casimir elements $\si_A=\sum_{i=1}^n e_i\ten e_i$ and $\si_B=\sum_{i=1}^m e_i\ten e_i$. Since the element $\eta_B$ lies in $(F\ten_{l_B^e}R)\oplus (R\ten_{l_B^e}F)$ and is non-degenerate and graded anti-symmetric in $(F\oplus R)\ten_{l_B^e}(F\oplus R)$, we can write the element $\eta_B$ as the sum $\sum_{1\leq i,j\leq m}[y_{ij}^t,y_{ji}^{t*}]$ for a suitable homogeneous $k$-basis $(y_{ij}^t)$ of each graded $k$-vector space $e_i F e_j$ and $(y_{ji}^{t*})$ of each graded $k$-vector space $e_i R e_j$. Since the element $\eta$ is non-degenerate and graded anti-symmetric in $N\ten_{l_A^e}N$, we can write the element $\eta$ as the sum $\sum_{1\leq i,j\leq n}[x_{ij}^t,x_{ji}^{t*}]$ for a suitable homogeneous $k$-basis $(x_{ij}^t,x_{ji}^{t*})$ of each graded $k$-vector space $e_i N e_j$ such that $x_{ij}^t$ are of degree greater than or equal to $\frac{2-d}{2}$ and the elements $x_{ij}^t$ and $x_{ji}^{t*}$ are all distinct unless $i$ coincides with $j$ and $d$ is divisible by $4$, in which case we may have $x_{ij}^t=x_{ji}^{t*}$. Then we regard $i$ as a frozen vertex for $1\leq i\leq m$ and as a non-frozen vertex for $m<i\leq n$. We regard the above homogeneous $k$-basis elements $y_{ij}^t$ as the original frozen arrows from $j$ to $i$ (the symbol $F$ stands for the graded vector space spanned by the original frozen arrows), the elements $y_{ji}^{t*}$ as the reversed frozen arrows from $j$ to $i$ (the symbol $R$ stands for the graded vector space spanned by the reversed frozen arrows), the elements $x_{ij}^t$ as the original non-frozen arrows from $j$ to $i$ and the elements $x_{ji}^{t*}$ which are distinct from $x_{ij}^t$ as the reversed non-frozen arrows from $j$ to $i$ (the symbol $N$ stands for the graded vector space spanned by the original and the reversed non-frozen arrows). We regard $z_A$ as the sum of loops at the non-frozen vertices and $z_B$ as the sum of loops at the frozen vertices. We regard $w_A$ as the potential on the relative double (only double the non-frozen part) of the whole quiver and $w_B$ as the potential on the double of the frozen subquiver. The necklace brackets $\{?, -\}_{\omega_{\eta}}$ and $\{?, -\}_{\omega_{\eta_B}}$ are precisely the {\em necklace brackets} on the completed dg path algebras associated with these two quivers respectively, \cf section~2 of \cite{BocklandtBruyn02}.

Similarly, let $l_{\ol{A}}$, $l_{B_1}$, $l_{B_2}$ be finite-dimensional semisimple $k$-algebras. Denote their product by $l_A$ and the canonical algebra injection $l_{B_i} \to l_A$ by $\varphi_i$, where $i=1$, $2$. Suppose that we are given a $7$-tuple $(N,F_1,F_2,\eta,w_A,w_{B_1},w_{B_2})$ satisfying the assumptions in analogy with the Assumptions~\ref{ass:quintuple} parts~a), b) and c). We define the associated {\em $d$-dimensional Ginzburg--Lazaroiu cospan} to be the cospan
\[
\begin{tikzcd}
    & (T_{l_{B_1}}(F_1\oplus R_1\oplus z_{B_1}l_{B_1}),d)\arrow{d}{\gamma_1} \\
  (T_{l_{B_2}}(F_2\oplus R_2\oplus z_{B_2}l_{B_2}),d)\arrow{r}{\gamma_2} & (T_{l_A}(F_1\oplus F_2\oplus N\oplus z_A\, l_{\ol{A}}),d)
\end{tikzcd}
\]
for pseudocompact dg algebras augmented over
\[
\begin{tikzcd}
   & l_{B_1}\arrow{d}{\varphi_1} \\
  l_{B_2}\arrow{r}{\varphi_2} & l_A
\end{tikzcd}
\]
in analogy with the $d$-dimensional Ginzburg--Lazaroiu morphism associated with a quintuple.

\subsection{The main results} \label{ss:the main results}

Let $k$ be a field of characteristic $0$. Let $l_A$ and $l_B$ be finite-dimensional semisimple $k$-algebras. Let $\varphi \colon l_B \to l_A$ be a morphism between algebras (which does not necessarily preserve the unit!) such that the equivalent conditions in Proposition~\ref{prop:decomposition} hold. Without loss of generality, we may and will assume that we have $l_A=l_{\ol{A}}\times l_B$ and $\varphi$ is the canonical algebra injection. Recall that, by Corollary~\ref{cor:symplectic}, a Calabi--Yau structure yields a canonical symplectic form on the $(-1)$-shifted dual of the graded Yoneda algebra of any perfectly valued dg module.

\begin{theorem} \label{thm:main}
Let $A$ and $B$ be complete pseudocompact dg algebras concentrated in non-positive degrees augmented over $l_A$ respectively $l_B$. Let $f\colon B \to A$ be a $\varphi$-augmented morphism between pseudocompact dg algebras. Let $d\geq 2$ be an integer. Then the following are equivalent.
\begin{itemize}
\item[i)] In the model category $\PCAlgc \varphi$ of $\varphi$-augmented morphisms between complete pseudocompact dg algebras, the morphism $f\colon B \to A$ is weakly equivalent to the \linebreak $d$-dimensional Ginzburg--Lazaroiu morphism $\gamma$ associated with a quintuple
\[
(N,F,\eta,w_A,w_B)\: ,
\]
where the elements $w_A$ and $w_B$ only contain cubic and higher terms.
\item[ii)] The morphism $f\colon B \to A$ carries a relative $d$-Calabi--Yau structure. Moreover, the kernel of the induced map $\Si^{-1}\D\Ext^*_B(l_B, l_B) \to  \Si^{-1}\D\Ext^*_A(l_A, l_A)$ is a Lagrangian (homogeneous) subspace concentrated in degrees less than or equal to $\frac{3-d}{2}$.
\end{itemize}
\end{theorem}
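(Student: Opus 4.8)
The plan is to prove the two implications separately, following the strategy of Van den Bergh's absolute structure theorem but carried out relatively and in the pseudo-compact setting. For the implication i)$\Rightarrow$ii), I would start from the explicit Ginzburg--Lazaroiu morphism $\gamma$ attached to a quintuple $(N,F,\eta,w_A,w_B)$ and first produce a canonical relative negative cyclic class. Using the bisymplectic forms $\eta$ and $\eta_B$ one writes down a relative Hochschild cycle and lifts it to $HN_d(A,B)$ via the connecting morphism, exactly as the volume class of an absolute Ginzburg algebra is lifted in section~10.2 of \cite{VandenBergh15}; the relevant relative complexes are computed with Proposition~\ref{prop:cyclic homology} and Proposition~\ref{prop:connecting morphism}. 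Non-degeneracy is then checked by resolving $A=T_{l_A}(\cdots)$ and $B=T_{l_B}(\cdots)$ through the standard bimodule resolution of a completed tensor algebra: the two isomorphism requirements in the definition of a relative Calabi--Yau structure reduce to the non-degeneracy of $\eta$ on $N$ and of $\eta_B$ on $F\oplus R$, together with the compatibility of $\gamma$ with these forms recorded in section~\ref{ss:Ginzburg-Lazaroiu morphisms}. The Lagrangian statement follows from Corollary~\ref{cor:symplectic}: the Yoneda algebras $\Ext^*_A(l_A,l_A)$ and $\Ext^*_B(l_B,l_B)$ are read off from the generators $F\oplus N\oplus z_A\,l_{\ol{A}}$ and $F\oplus R\oplus z_B\,l_B$, the induced map on $(-1)$-shifted duals annihilates precisely the part dual to $R=\Si^{d-3}DF$, and the degree constraint $F\subset[\tfrac{3-d}{2},0]$ places this kernel in degrees $\leq\tfrac{3-d}{2}$ and makes it Lagrangian for the symplectic form of Lemma~\ref{lem:symplectic}.

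The implication ii)$\Rightarrow$i) is the genuine Darboux step. First I would replace $A$ and $B$ by minimal models: since both are complete pseudo-compact augmented dg algebras concentrated in non-positive degrees, Kadeishvili's theorem (Theorem~1 of \cite{Kadeishvili80}) transports the $A_\infty$-structure to the Yoneda algebras $E_A=\Ext^*_A(l_A,l_A)$ and $E_B=\Ext^*_B(l_B,l_B)$, and cobar duality identifies $A$ and $B$, up to weak equivalence in $\PCAlgc\varphi$, with the completed tensor algebras on $\Si^{-1}\D\ol{E_A}$ and $\Si^{-1}\D\ol{E_B}$, the differential and higher terms being dual to the products $m_n$. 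The relative Calabi--Yau structure equips these generator spaces, via Corollary~\ref{cor:symplectic}, with symplectic forms and endows $f$ with a Lagrangian correspondence between them; the hypothesis in ii) states exactly that the kernel $L$ of the induced map $\Si^{-1}\D E_B\to\Si^{-1}\D E_A$ is Lagrangian and lies in degrees $\leq\tfrac{3-d}{2}$. A Darboux-type argument then splits the generators of $A$ into the frozen part $F$ (shared with $B$), a complementary part carrying the bisymplectic form $\eta$ and giving $N$, and a central loop $z_A$ dual to the top Calabi--Yau class; on the $B$-side the Lagrangian $L$ is identified with $R=\Si^{d-3}DF$, producing the frozen bisymplectic form $\eta_B$ and the loop $z_B$, with the degree bounds matching because $R=\Si^{d-3}DF$.

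It remains to recover the potentials and verify the assumptions. The cyclic invariance of the minimal $A_\infty$-structure with respect to the Calabi--Yau pairing shows that the products assemble, through the cyclic symmetrisation map $\sym$, into an element $w_A\in\Tr(T_{l_A}(F\oplus N))$ and, on the frozen side, into $w_B\in\Tr(T_{l_B}(F\oplus R))$; minimality ($m_1=0$) forces both potentials to contain only cubic and higher terms, the quadratic part having been absorbed into $\eta$ and $\eta_B$. The $A_\infty$-relations, together with $d^2=0$ and the compatibility of $f$ with the differentials, translate into $\{w_B,w_B\}_{\omega_{\eta_B}}=0$ and the coupled condition of Assumptions~\ref{ass:quintuple}c), which by Propositions~\ref{prop:necklace b}, \ref{prop:necklace a1} and \ref{prop:necklace a2} are precisely the statements that the candidate $\gamma$ is an honest morphism of dg algebras. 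A final comparison then identifies $f$ with this $\gamma$ up to componentwise weak equivalence.

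The principal obstacle is the cyclic invariance in the relative setting: promoting the minimal $A_\infty$-algebra together with its Calabi--Yau pairing to a \emph{cyclic} structure, so that the products descend to a single trace-class potential, and doing this \emph{compatibly with the morphism} $f$, so that the $A$-potential $w_A$ and the frozen potential $w_B$ satisfy the coupled relative Maurer--Cartan condition of Assumptions~\ref{ass:quintuple}c). Controlling the interplay between the Lagrangian $L$, the induced symplectic forms and the cyclic structure, while remaining inside the pseudo-compact and complete framework, is where the real work lies; the remaining verifications are the pseudo-compact analogues of the computations in sections~10--11 of \cite{VandenBergh15}.
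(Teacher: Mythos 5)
Your overall architecture is the right one and your i)$\Rightarrow$ii) sketch matches the paper's proof: the class is built from $z_A^\dagger$ and $z_B^\dagger$ via the relative versions of Propositions~\ref{prop:cyclic homology} and \ref{prop:connecting morphism}, non-degeneracy is reduced to that of $\eta$ and $\ol{\eta_B}$ by tensoring with $l_A$ over $A$ on both sides, and the Lagrangian statement is read off from Corollary~A.5.6 of \cite{VandenBergh15}. For ii)$\Rightarrow$i) you also correctly identify the skeleton (minimal tensor-algebra models, a Darboux splitting of the generators, extraction of $w_A$ and $w_B$) and, to your credit, you explicitly flag the hardest point. But there are two genuine gaps.

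First, you never address how to pass from the given \emph{negative cyclic} class to an \emph{exact} (cyclic) one. The whole Darboux argument needs a representative $(\chi_A,s\chi_B)$ in reduced relative cyclic homology, so that $d(\chi_B)$ and $d(\chi_A)+f(\chi_B)$ are sums of graded commutators; this is what ultimately lets the quadratic parts be normalised and the higher terms of $\eta$ be removed, and it is also what makes the potential appear through the cyclic symmetrisation map $\sym$. The paper obtains this at the very start of the proof by showing $HP^{red}_*(A,B)=0$ (the pseudo-compact variant of Theorem~9.1 of \cite{VandenBergh15} applied to the triangle relating $HP^{red}(B)$, $HP^{red}(A)$ and $HP^{red}(A,B)$), whence every relative Calabi--Yau structure is automatically exact. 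Without this step your argument has no access to the commutator description of $\eta$ and $\eta_B$.

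Second, your proposed resolution of the difficulty you flag --- promoting the minimal $A_\infty$-structures on $\Ext^*_A(l_A,l_A)$ and $\Ext^*_B(l_B,l_B)$ to \emph{cyclic} $A_\infty$-structures compatibly with the morphism $f$, so that the products assemble into trace-class potentials --- is not available technology in this relative, pseudo-compact, semisimple-base setting, and proving it would be a theorem of comparable depth to the one you are trying to establish. The paper circumvents cyclic $A_\infty$-structures entirely: once exactness gives $\eta=(d(z_A)+f(z_B))^\dagger$ as a sum of graded commutators with non-degenerate quadratic part (the non-degeneracy itself requiring a substantial $A_\infty$-module computation with the homotopy transfer formula, using the right relative Calabi--Yau structure on the restriction functor from \cite{BravDyckerhoff19}, which your sketch also elides), the element $\ol{w_A}=(-1)^{|\eta'|+1}\eta'd(\eta'')+\sum_{b\in\cb}bf(b^*)$ is shown directly to be componentwise cyclically symmetric, $w_A$ is defined as a $\sym$-preimage, and the identities $d(v)=\{w_A,v\}_{\omega_\eta}$ on $N_A$ and $f(v)=-\{w_A,v\}_{\omega_{\eta_B}}$ on $R_B$ are verified by the necklace-bracket calculus of section~\ref{ss:the necklace bracket}. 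You would need to either supply the relative cyclic minimal model theorem you invoke or replace it by an argument of this kind.
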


\begin{remarks}
\begin{itemize}
\item[a)] Note that the degree condition in part~ii) is vacuous if the Calabi--Yau dimension $d$ is less than or equal to $3$.
\item[b)] In the absolute case (when $B$ vanishes), the theorem reduces to the main theorem of
Van den Bergh's \cite{VandenBergh15}.
\item[c)] In the setting of non-pseudocompact dg algebras augmented over finite products of copies of $k$, and for special graded $l_A$-bimodules $N$ and potentials $w_A$ and $w_B$, the implication from i) to ii) is due to Yeung \cite{Yeung16}.
\end{itemize}
\end{remarks}

In the following two sections, we will prove the two implications in the theorem. The reader may find the proof long and technical but compared to Joyce--Safronov's proof \cite{JoyceSafronov19} of the corresponding result in the commutative case, it is relatively short and involves
few computations.

Before embarking on the proof, we state two important special cases and a generalisation.

\begin{corollary} \label{cor:dimension 3} Suppose that
\begin{itemize}
\item[a)] the assumptions in part~ii) of the above Theorem~\ref{thm:main} hold, 
\item[b)] we have $d=3$ and
\item[c)] the algebras $l_A$ and $l_B$ are finite products of copies of the 
ground field $k$.
\end{itemize}
Then $f$ is weakly equivalent to a $3$-dimensional Ginzburg morphism. Moreover, if the graded algebras $A$ and $B$ are concentrated in degree $0$, then $f$ is isomorphic to a morphism from a completed preprojective algebra to a relative Jacobian algebra.
\end{corollary}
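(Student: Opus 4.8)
The plan is to deduce the corollary from Theorem~\ref{thm:main} by specialising the general Ginzburg--Lazaroiu morphism to the present situation. By hypothesis~a), part~ii) of Theorem~\ref{thm:main} holds, so the implication ii)~$\Rightarrow$~i) yields that $f$ is weakly equivalent, in $\PCAlgc \varphi$, to the Ginzburg--Lazaroiu morphism $\gamma$ associated with a quintuple $(N,F,\eta,w_A,w_B)$ whose potentials $w_A$ and $w_B$ contain only cubic and higher terms. Since $d=3\leq 3$, the observation following Proposition~\ref{prop:necklace a2} gives $w_B=0$, and I write $W:=w_A$.

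First I would translate the remaining data into an ice quiver with potential, following the ice quiver case of section~\ref{ss:Ginzburg-Lazaroiu morphisms}. Writing $l_A=\prod_{i=1}^n ke_i$ and $l_B=\prod_{i=1}^m ke_i$ with $n\geq m$, the idempotents $e_i$ become the vertices of a quiver $Q$, frozen for $i\leq m$ and non-frozen for $m<i\leq n$. For $d=3$ the bimodule $F$ is concentrated in degree $0$, one has $R=\Si^{d-3}DF=DF$ in degree $0$, and $N$ is concentrated in degrees $[2-d,0]=[-1,0]$; choosing homogeneous bases as in that section identifies $F$ with the frozen arrows, the degree-$0$ part of $N$ with the non-frozen arrows, the degree-$(-1)$ part of $N$ with their reverses $\alpha^*$, and $\eta_B$, $\eta$ with the canonical symplectic forms on the doubles of the frozen and non-frozen parts. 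A monomial of internal degree $0$ in $T_{l_A}(F\oplus N)$ cannot involve any degree-$(-1)$ reverse, so the degree-$0$ element $W=w_A$ is a $k$-linear combination of cycles in the arrows of $Q$; being cubic and higher it consists of cycles of length at least $3$ and is therefore a potential on $Q$ in the sense of section~\ref{ss:ice quivers with potential}.

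Next I would check that the differentials and the morphism $\gamma$ match those of the $3$-dimensional Ginzburg morphism of $(Q,F,W)$. Under the above identifications the source $(T_{l_B}(F\oplus R\oplus z_B\, l_B),d)$ is the $2$-dimensional Ginzburg dg algebra of $(F,0)$: since $w_B=0$ its differential vanishes on $F\oplus R$ and sends $z_B$ to $\si_B'\,\eta_B\,\si_B''=\sum_i e_i\,\eta_B\,e_i$, the frozen preprojective relation. The target $(T_{l_A}(F\oplus N\oplus z_A\, l_{\ol{A}}),d)$ is the $3$-dimensional relative Ginzburg dg algebra: by van den Bergh's identification of the single bracket $\{W,-\}_{\omega_\eta}$ with cyclic derivatives (section~10 of \cite{VandenBergh15}), the differential sends each reverse $\alpha^*$ to $-\del_\alpha W$ and each loop $z_A$ at a non-frozen vertex $i$ to $\si_{\ol{A}}'\,\eta\,\si_{\ol{A}}''=\sum_\alpha e_i(\alpha\alpha^*-\alpha^*\alpha)e_i$; the defining formulas for $\gamma$ reproduce the Ginzburg morphism in the same way. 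Hence $\gamma$, and therefore $f$, is weakly equivalent to the $3$-dimensional Ginzburg morphism of $(Q,F,W)$.

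For the moreover part, suppose that $A$ and $B$ are concentrated in degree $0$ as graded algebras, so that they are stalk algebras and $f=H^0(f)$. Weak equivalences in $\PCAlgc\varphi$ are componentwise quasi-isomorphisms, and $H^0$ carries quasi-isomorphisms to isomorphisms, so applying $H^0$ to the zigzag connecting $f$ and $\gamma$ gives an isomorphism of morphisms $H^0(f)\cong H^0(\gamma)$. By section~\ref{ss:ice quivers with potential}, $H^0$ of the $2$-dimensional Ginzburg dg algebra is the preprojective algebra and $H^0$ of the $3$-dimensional relative Ginzburg dg algebra is the relative Jacobian algebra, so $H^0(\gamma)$ is the induced morphism from the preprojective algebra of $(F,0)$ to the relative Jacobian algebra of $(Q,F,W)$. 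Therefore $f$ is isomorphic to such a morphism. The main obstacle is the third step: verifying that the double-bracket differentials of the Ginzburg--Lazaroiu morphism coincide on the nose with the cyclic-derivative differentials of the Ginzburg morphism, which rests on van den Bergh's bisymplectic calculus specialised to products of copies of $k$.
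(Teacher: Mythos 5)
Your proposal is correct and follows essentially the same route as the paper: invoke the implication ii)$\Rightarrow$i) of Theorem~\ref{thm:main}, specialise to the ice quiver case with $w_B=0$, observe that for degree reasons $W=w_A$ involves only the degree-$0$ arrows of $Q$, identify $\gamma$ with the $3$-dimensional Ginzburg morphism of $(Q,F,W)$, and pass to $H^0$ for the stalk-algebra case. The paper's own proof is just a terser version of the same argument, leaving the bracket/cyclic-derivative identification implicit in the discussion of section~\ref{ss:Ginzburg-Lazaroiu morphisms}.
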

\begin{proof}
This is the ice quiver case considered in section~\ref{ss:Ginzburg-Lazaroiu morphisms}. We now use the notation from that section. Since we have $d=3$, the elements $x_{ij}^t$ and $x_{ji}^{t*}$ in a suitable homogeneous $k$-basis $(x_{ij}^t,x_{ji}^{t*})$ are all distinct. The elements $x_{ij}^t$ are of degree $0$ and the $x_{ji}^{t*}$ are of degree $-1$. Let $Q$ be the quiver whose arrows from $j$ to $i$ are the $x_{ij}^t$ and $F\subseteq Q$ the frozen subquiver whose arrows from $j$ to $i$ are the $y_{ij}^t$. In this case, the potential $w_B$ vanishes and, for degree reasons, the potential $W=w_A$ only contains arrows in the given quiver $Q$. Therefore, by the implication from ii) to i) in Theorem~\ref{thm:main}, the morphism $f$ is weakly equivalent to the $3$-dimensional Ginzburg morphism $\gamma$ associated with the ice quiver with potential $(Q,F,W)$. If the graded algebras $A$ and $B$ are concentrated in degree $0$, then the morphism $f$ is isomorphic to $H^0(\gamma)$. This implies the statement.
\end{proof}

\begin{corollary} \label{cor:dimension 2} Suppose that
\begin{itemize}
\item[a)] the assumptions in part~ii) of the above Theorem~\ref{thm:main} hold, 
\item[b)] we have $d=2$ and
\item[c)] the algebras $l_A$ and $l_B$ are finite products of copies of the 
ground field $k$.
\end{itemize}
Then $f$ is weakly equivalent to a $2$-dimensional Ginzburg morphism. Moreover, if the graded algebras $A$ and $B$ are concentrated in degree $0$, then $f$ is isomorphic to a morphism from a finite product of copies of the power series algebra $k\llbracket x \rrbracket$ to a completed relative preprojective algebra.
\end{corollary}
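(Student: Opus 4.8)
The plan is to follow the pattern of the proof of Corollary~\ref{cor:dimension 3} almost verbatim, the only real work being the degree bookkeeping specific to $d=2$. First I would invoke Theorem~\ref{thm:main}: assumption~a) is exactly part~ii), and since $d=2$ the degree condition there is vacuous (so it merely says that $f$ carries a relative $2$-Calabi--Yau structure), whence $f$ is weakly equivalent to the Ginzburg--Lazaroiu morphism $\gamma$ attached to some quintuple $(N,F,\eta,w_A,w_B)$ with $w_A,w_B$ containing only cubic and higher terms. Because $l_A$ and $l_B$ are finite products of copies of $k$ by assumption~c), we are in the ice quiver case of section~\ref{ss:Ginzburg-Lazaroiu morphisms}, so it remains only to read off the combinatorial data.

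The heart of the argument is the observation that, for $d=2$, the graded $l_B$-bimodule $F$ is concentrated in degrees $[\frac{3-d}{2},0]=[\frac12,0]$, an empty range of integers; hence $F=0$, and therefore $R=\Si^{d-3}DF=0$ as well. Consequently $w_B\in\Tr(T_{l_B}(F\oplus R))=\Tr(l_B)$ vanishes (as already noted for $d\leq 3$), the Casimir image $\eta_B$ vanishes, and the source reduces to $B=(T_{l_B}(z_B\,l_B),0)$: since $z_B$ is an $l_B$-central degree-$0$ indeterminate and $l_B=\prod_{i=1}^m ke_i$, this is the product $\prod_{i=1}^m k\llbracket z_B e_i\rrbracket$ of power series algebras in one variable, one for each frozen vertex. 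On the target side, $N$ is concentrated in degree $[2-d,0]=[0,0]$, so (using $F=0$) the whole algebra $T_{l_A}N$ sits in degree $0$; as $w_A$ has degree $3-d=1$ it must vanish, that is, the potential $W=w_A$ is zero. The basis $\{x_{ij}^t,x_{ji}^{t*}\}$ of $N$ consists of distinct degree-$0$ elements (here $d=2$ is not divisible by $4$), which we read as the non-frozen arrows $x_{ij}^t$ of a quiver $Q$ together with their reverses $x_{ji}^{t*}$, the frozen subquiver $F\subseteq Q$ having only the frozen vertices $1,\dots,m$ and no arrows.

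With these identifications the target $A=(T_{l_A}(N\oplus z_A\,l_{\ol{A}}),d)$, whose only nonzero differential is $d(z_A)=\si'_{\ol{A}}\,\eta\,\si''_{\ol{A}}$, is exactly the $2$-dimensional relative Ginzburg dg algebra of $(Q,F,0)$, and the formula $\gamma(z_B)=\si'_B\,\eta\,\si''_B=\sum_\alpha e_i(\alpha\alpha^*-\alpha^*\alpha)e_i$ exhibits $\gamma$ as the $2$-dimensional Ginzburg morphism associated with $(Q,F,0)$; this gives the first claim. If moreover the graded algebras $A$ and $B$ are concentrated in degree~$0$, then $f$ is isomorphic to $H^0(\gamma)$, and since $H^0(B)=B$ is the product of power series algebras above while $H^0(A)$ is the relative preprojective algebra of $(Q,F)$, the second claim follows. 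The one point requiring care---and the step I would verify most carefully---is matching $d(z_A)$ and $\gamma(z_B)$ with the preprojective relation of section~\ref{ss:ice quivers with potential}; all the genuinely hard input, namely the passage from a Calabi--Yau structure to a Ginzburg--Lazaroiu presentation, has already been absorbed into Theorem~\ref{thm:main}, so no new obstacle arises here.
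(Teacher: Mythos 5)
Your proposal is correct and follows exactly the route the paper takes: the published proof is a one-line reference to the argument for Corollary~\ref{cor:dimension 3}, noting only that for $d=2$ the bimodule $F$ vanishes, $N$ sits in degree $0$, and $w_A=w_B=0$, which is precisely the degree bookkeeping you carry out in detail. Your identification of the source as $\prod_i k\llbracket z_Be_i\rrbracket$ and of $d(z_A)$, $\gamma(z_B)$ with the preprojective relations matches the intended reading of the ice quiver case.
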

\begin{proof}
Similar to the proof of the preceding corollary. In this case, the graded $l_B$-bimodule $F$ vanishes and the graded $l_A$-bimodule $N$ is concentrated in degree $0$. Moreover, both elements $w_A$ and $w_B$ vanish.
\end{proof}

Let $l_A$, $l_{B_1}$, $l_{B_2}$ be finite-dimensional semisimple $k$-algebras. Let $\varphi_1 \colon l_{B_1} \to l_A$ and $\varphi_2 \colon l_{B_2} \to l_A$ be morphisms between algebras (which do not necessarily preserve the unit!) such that the equivalent conditions in Proposition~\ref{prop:decomposition} hold and the products $\varphi_1(1_{B_1})\cdot \varphi_2(1_{B_2})$ and $\varphi_2(1_{B_2})\cdot \varphi_1(1_{B_1})$ equal zero. Without loss of generality, we may and will assume that we have $l_A=l_{\ol{A}}\times l_{B_1}\times l_{B_2}$ and $\varphi_1$, $\varphi_2$ are the canonical algebra injection.

\begin{theorem} \label{thm:CY-cospans}
Let $A$, $B_1$, $B_2$ be complete pseudocompact dg algebras concentrated in non-positive degrees augmented over $l_A$, $l_{B_1}$, $l_{B_2}$, respectively. Let $f_i\colon B_i \to A$ be a \linebreak $\varphi_i$-augmented morphism between pseudocompact dg algebras, where $i=1$, $2$. Let $d\geq 2$ be an integer. Then the following are equivalent.
\begin{itemize}
\item[i)] In the model category of cospans for complete pseudocompact dg algebras augmented over
\[
\begin{tikzcd}
   & l_{B_1}\arrow{d}{\varphi_1} \\
  l_{B_2}\arrow{r}{\varphi_2} & l_A\mathrlap{\: ,}
\end{tikzcd}
\]
the cospan
\[
\begin{tikzcd}
   & B_1\arrow{d}{f_1} \\
  B_2\arrow{r}{f_2} & A
\end{tikzcd}
\]
is weakly equivalent to the $d$-dimensional Ginzburg--Lazaroiu cospan associated with a $7$-tuple $(N,F_1,F_2,\eta,w_A,w_{B_1},w_{B_2})$, where the elements $w_A$, $w_{B_1}$, $w_{B_2}$ only contain cubic and higher terms.
\item[ii)] The cospan
\[
\begin{tikzcd}
   & B_1\arrow{d}{f_1} \\
  B_2\arrow{r}{f_2} & A
\end{tikzcd}
\]
carries a $d$-Calabi--Yau structure. Moreover, the kernel of the induced map
\[
\Si^{-1}\D\Ext^*_{B_1\times B_2}(l_{B_1}\times l_{B_2}, l_{B_1}\times l_{B_2}) \to  \Si^{-1}\D\Ext^*_A(l_A, l_A)
\]
is a Lagrangian (homogeneous) subspace concentrated in degrees less than or equal to $\frac{3-d}{2}$.
\end{itemize}
\end{theorem}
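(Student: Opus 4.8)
The plan is to prove the theorem by \emph{folding} the cospan into a single $\varphi$-augmented morphism and then applying Theorem~\ref{thm:main} together with Proposition~\ref{prop:CY-cospan}. Set $l_B = l_{B_1}\times l_{B_2}$, $B = B_1\times B_2$, and let $\varphi = \begin{bmatrix}\varphi_1 & \varphi_2\end{bmatrix}\colon l_B \to l_A$ and $f = \begin{bmatrix}f_1 & f_2\end{bmatrix}\colon B \to A$ be the induced morphisms. Since $\varphi_1$ and $\varphi_2$ are the canonical inclusions of distinct factors of $l_A$, the idempotents $\varphi_1(1_{B_1})$ and $\varphi_2(1_{B_2})$ are orthogonal; hence $\varphi(1_B)=\varphi_1(1_{B_1})+\varphi_2(1_{B_2})$ is idempotent and $\varphi$ is a section of $k$-algebras, so the hypotheses of Proposition~\ref{prop:decomposition} and therefore of Theorem~\ref{thm:main} are met for $f$. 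Correspondingly $f_1(1_{B_1})$ and $f_2(1_{B_2})$ are orthogonal in $A$, which is exactly the vanishing hypothesis needed in Proposition~\ref{prop:CY-cospan}.

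First I would translate the Calabi--Yau data across the fold. By Proposition~\ref{prop:CY-cospan}, the cospan carries a $d$-Calabi--Yau structure $[(\tilde{\xi_A},s\tilde{\xi_{B_1}},s\tilde{\xi_{B_2}})]$ if and only if the folded morphism $f$ carries the relative $d$-Calabi--Yau structure $[(\tilde{\xi_A},s(\tilde{\xi_{B_1}}-\tilde{\xi_{B_2}}))]$. Next I would check that the Lagrangian conditions match: because $B=B_1\times B_2$ is a product of augmented algebras with orthogonal idempotents, the graded Yoneda algebra splits as $\Ext^*_B(l_B,l_B)\simeq \Ext^*_{B_1}(l_{B_1},l_{B_1})\oplus \Ext^*_{B_2}(l_{B_2},l_{B_2})$, so the induced map $\Si^{-1}\D\Ext^*_B(l_B,l_B)\to \Si^{-1}\D\Ext^*_A(l_A,l_A)$ of part~ii) of the present theorem \emph{is} the map appearing in part~ii) of Theorem~\ref{thm:main}, and the requirement that its kernel be Lagrangian and concentrated in degrees $\leq \frac{3-d}{2}$ is the same in both statements. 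Thus part~ii) of the cospan theorem is equivalent to the hypotheses of part~ii) of Theorem~\ref{thm:main} applied to $f$.

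Then I would transport the Ginzburg--Lazaroiu description. Applying Theorem~\ref{thm:main} to $f$ yields a weak equivalence, in $\PCAlgc \varphi$, between $f$ and the Ginzburg--Lazaroiu morphism attached to a quintuple $(N,F,\eta,w_A,w_B)$ with $w_A,w_B$ of cubic and higher order. The crucial point is that, since $l_B=l_{B_1}\times l_{B_2}$ with orthogonal central idempotents, the $l_B$-bimodule $F$ splits canonically as $F=F_1\oplus F_2$ with $F_i=\varphi_i(1_{B_i})\,F\,\varphi_i(1_{B_i})$, whence $R=\Si^{d-3}DF$ splits as $R_1\oplus R_2$ and the source $T_{l_B}(F\oplus R\oplus z_B\,l_B)$ decomposes as a product of the two factors $T_{l_{B_i}}(F_i\oplus R_i\oplus z_{B_i}\,l_{B_i})$; correspondingly $w_B$ decomposes as $w_{B_1}+w_{B_2}$, each of cubic and higher order. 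This exhibits the Ginzburg--Lazaroiu morphism as the Ginzburg--Lazaroiu cospan of the $7$-tuple $(N,F_1,F_2,\eta,w_A,w_{B_1},w_{B_2})$. For the converse direction I would unfold: a weak equivalence of cospans folds to a weak equivalence in $\PCAlgc \varphi$ (the product decomposition of the sources is preserved since all morphisms respect the augmentations over $l_{B_1}\times l_{B_2}$), and conversely the orthogonality of the idempotents guarantees that a weak equivalence of folded morphisms splits back into one of cospans.

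The main obstacle I expect is the bookkeeping in this last step: one must verify that \emph{every} ingredient of the quintuple genuinely respects the block decomposition induced by the orthogonal idempotents---that there are no mixed $F_1$--$F_2$ components in $F$, $R$, or $w_B$, and that the double brackets $\{?,-\}_{\omega_{\eta_B}}$ and the differentials split accordingly---so that the folded morphism really \emph{is} the Ginzburg--Lazaroiu cospan of the $7$-tuple, not merely abstractly equivalent to one. This is precisely where the hypothesis that $\varphi_1(1_{B_1})$ and $\varphi_2(1_{B_2})$ are orthogonal does the real work, forcing all relevant bimodules and potentials to be block-diagonal.
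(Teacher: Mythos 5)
Your proposal is correct and follows exactly the paper's route: the paper's own proof is the one-line deduction ``combine Proposition~\ref{prop:CY-cospan} with Theorem~\ref{thm:main}'', which is precisely your folding argument, and your additional bookkeeping (the splitting of $\Ext^*_{B_1\times B_2}$ and of the quintuple into a $7$-tuple along the orthogonal central idempotents) correctly fills in the details the paper leaves implicit.
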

\begin{proof}
The statement follows from Proposition~\ref{prop:CY-cospan} and Theorem~\ref{thm:main}.
\end{proof}

\subsection{Proof of the implication from i) to ii) in Theorem~\ref{thm:main}} \label{ss:proof from i) to ii)}

The following proof is inspired by the ice quiver case in section~\ref{ss:Ginzburg-Lazaroiu morphisms} and we advise the reader to refer to that section constantly in order to follow the reasoning. We first construct an exact relative \linebreak $d$-Calabi--Yau structure on $f$. Denote the pseudocompact graded $l_A$-bimodule $F\oplus N\oplus z_A\, l_{\ol{A}}$ by $V_A$ and the pseudocompact graded $l_B$-bimodule $F\oplus R\oplus z_B\, l_B$ by $V_B$. Since they are of finite total dimension, the source $(T_{l_B}V_B,d)$ and the target $(T_{l_A}V_A,d)$ of the Ginzburg--Lazaroiu morphism $\gamma$ are smooth. By sections~12.1 and 12.4 of \cite{VandenBergh15}, the dg algebra $A$ is quasi-isomorphic to $(T_{l_A}V_A,d)$ and similarly for $B$. Thus, since smoothness is preserved under quasi-isomorphisms (by the relative version of part~(d) of Proposition~3.10 of \cite{Keller11b}, it is even preserved under localisations), the pseudocompact dg algebras $A$ and $B$ are smooth. Moreover, since quasi-isomorphisms induce equivalences of derived categories, exact relative $d$-Calabi--Yau structures are also preserved under quasi-isomorphisms. So we may and will assume that we have $f=\gamma$. Let us put
\[
X(A)=\cone(\Omega^1_{l_A}A \to A\ten_{l_A}A)
\quad \mbox{and}\quad
X(B)=\cone(\Omega^1_{l_B}B \to B\ten_{l_B}B)\: .
\]
These are cofibrant resolutions of $A$ and $B$ as pseudocompact dg modules over $A^e$ respectively $B^e$. Recall that we denote the inverse of the bijection in Proposition~\ref{prop:invariant space} by $\dag$. Since the element $d(z_B^\dagger)=\eta_B$ is graded anti-symmetric in $(F\oplus R)\ten_{l^e_B}(F\oplus R)$ and the element $d(z_A^\dagger)+f(z_B^\dagger)=\eta$ is graded anti-symmetric in $N\ten_{l_A^e}N$, the element $(\ol{z_A}^\dagger,s\ol{z_B}^\dagger)$ in
\[
\cone(f\colon (B/(l_B+[B,B]))_{l_B} \to (A/(l_A+[A,A]))_{l_A})
\]
is closed. By the relative version of Proposition~\ref{prop:connecting morphism}, the corresponding class $[(\ol{z_A}^\dagger,s\ol{z_B}^\dagger)]$ in the reduced relative cyclic homology
\[
H^{1-d}(\cone(f\colon (B/(l_B+[B,B]))_{l_B} \to (A/(l_A+[A,A]))_{l_A}))=\HC^{\red}_{d-1}(A,B)
\]
is mapped to $[((0,-sDz_A^\dagger),s(0,sDz_B^\dagger))]$ in $\HH^{\red}_d(A,B)$ by Connes' map $B$. We define
\[
\xi_B=(1\ten1)\ten sDz_B+sDz_B\ten(1\ten1)-(sD\ten sD)(d(z_B)) \in X(B)\ten_{B^e}X(B)
\]
and
\[
\xi_A=-(1\ten 1)\ten sDz_A-sDz_A\ten(1\ten 1)+(sD\ten sD)(d(z_A)+f(z_B)) \in X(A)\ten_{A^e}X(A) \: .
\]
One can check that the pair $(\xi_A,s\xi_B)$ in the cone
\[
\cone(f\ten f\colon X(B)\ten_{B^e}X(B) \to X(A)\ten_{A^e}X(A))
\]
is a closed representative of the class $[((0,-sDz_A^\dagger),s(0,sDz_B^\dagger))]$. We claim that the class $[(\ol{z_A}^\dagger,s\ol{z_B}^\dagger)]$ in $\HC_{d-1}(A,B)$ gives an exact relative $d$-Calabi--Yau structure on $f$. To prove this, it suffices to prove that the morphisms $[\hat{\xi_B}s^{1-d}]$ and $[\hat{\xi}']$ defined in section~\ref{ss:Calabi-Yau structures} are isomorphisms. Equivalently, we need to prove that the class $[\xi_B]$ in
\[
H^{1-d}(B\lten_{B^e}B)\xlongleftarrow{_\sim} H^{1-d}(X(B)\ten_{B^e}X(B))
\]
is non-degenerate and that the class $[(\xi_A, (-1)^d s(f\ten\id)(\xi_B))]$ in
\[
\begin{tikzcd}
H^{-d}(A\lten_{A^e}\cone \mu) & H^{-d}(\cone(\id\ten f\colon X(A)\ten_{B^e}X(B)\to X(A)\ten_{A^e}X(A)))\arrow[swap]{l}{\sim}
\end{tikzcd}
\]
is non-degenerate. Since the element $w_B$ only contains cubic and higher terms, the condition~(1) in Lemma~10.2 of \cite{VandenBergh15} holds and hence the non-degeneracy of the class $[\xi_B]$ follows from the sufficiency in that lemma. For the non-degeneracy of the second class, we claim that it suffices to show that the morphism $\RHom_{A^e}([\hat{\xi}'],D l_A^e)$ in $\cd(l_A^e)$ is an isomorphism. Indeed, the dg algebra $A$ is complete $l_A$-augmented and pseudocompact so that the category $\cd(A^e)\op$ is compactly generated by $(\pvd A^e)\op$. Moreover, the category $\pvd A^e$ is generated by the object $D l_A^e$. Clearly, the claim follows. Since we have the natural isomorphism
\[
\begin{tikzcd}
\RHom_{A^e}(?,D l_A^e)\arrow[no head]{r}{\sim} & \D(l_A\lten_A\, ?\lten_A l_A)\: ,
\end{tikzcd}
\]
it suffices to show that the morphism $l_A\lten_A \Si^d (\cone \mu)^\vee\lten_A l_A\to l_A\lten_A A\lten_Al_A$ in $\cd(l_A^e)$ induced by $[\hat{\xi}']$ is an isomorphism. This holds if and only if the corresponding class in
\[
H^{-d}((l_A\lten_A A\lten_Al_A)\lten_{l_A^e}\RHom_{l_A^e}^{\pc}(l_A\lten_A (\cone \mu)^\vee\lten_A l_A,l_A^e))
\]
is non-degenerate. Since the pseudocompact dg $A^e$-modules $A$ and $A\lten_B A$ are perfect, so is $\cone \mu$. We have the canonical isomorphisms
\begin{align*}
\RHom_{l_A^e}^{\pc}(l_A\lten_A (\cone \mu)^\vee\lten_Al_A,l_A^e) & \simeq \RHom_{l_A^e}^{\pc}((\cone \mu)^\vee\lten_{A^e}l_A^e,l_A^e) \\
&\simeq\RHom_{A^e}^{\pc}((\cone \mu)^\vee,\RHom_{l_A^e}^{\pc}(l_A^e,l_A^e)) \\
&\liso l_A^e\lten_{A^e}(\cone \mu)^{\vee\vee} \\
&\liso l_A^e\lten_{A^e}\cone \mu \\
&\simeq l_A\lten_A\cone \mu \lten_A l_A\: .
\end{align*}
So it suffices to show that the class represented by the image of $\xi_A+(-1)^d s(f\ten\id)(\xi_B)$ in
\begin{equation} \label{eq:relative homology}
H^{-d}((l_A\lten_A A\lten_A l_A)\lten_{l_A^e}(l_A\lten_A\cone \mu \lten_A l_A))
\end{equation}
is non-degenerate. For this, let us first analyse $H^*(L \lten_{l_A^e} M)$, where we abbreviate 
\[
L=l_A\lten_A A\lten_A l_A \quad \mbox{and}\quad M=l_A\lten_A\cone \mu \lten_A l_A \: .
\]
Since the field $k$ is perfect and the $k$-algebra $l_A$ is semisimple, the $k$-algebra $l_A$ is separable and so is $l_A^e$. Thus, each object in the derived category $\cd(l_A^e)$ is isomorphic to its homology and the derived tensor product is isomorphic to the non-derived one. Clearly, we have the canonical isomorphism $L \iso l_A \oplus \Si V_A$. To analyse $M$, let us denote the functor $l_A\lten_A\, ?\lten_Al_A$ by $\Phi$. Recall that the morphism $\mu \colon A \lten_B A \to A$ in $\cd(A^e)$ corresponds to the morphism $f\colon B \to A$ in $\cd(B^e)$ by adjunction. Thus, the morphism $\Phi(\mu)$ is the canonical morphism $\nu$
\[
\begin{tikzcd}
(l_A\ten_{l_B}l_B\ten_{l_B}l_A)\oplus(l_A\ten_{l_B}\Si V_B\ten_{l_B}l_A)\arrow{r} & l_A \oplus\Si V_A
\end{tikzcd}
\]
in $\cd(l_A^e)$. Since the differentials of the source and the target of $\nu$ vanish, the homology of $\cone \nu$ is isomorphic to $\cok \nu \oplus \Si \ker \nu$. 
On the other hand, the kernel of $\nu$ is isomorphic to $\Si R \oplus \Si z_B\, l_B$ and the cokernel of $\nu$ is isomorphic to $l_{\ol{A}}\oplus \Si N\oplus\Si z_A\, l_{\ol{A}}$.
As a consequence of these observations, we obtain that the pseudocompact vector space~(\ref{eq:relative homology}) is canonically isomorphic to
\[
H^{-d}((l_A\oplus \Si V_A)\ten_{l_A^e}((l_{\ol{A}}\oplus \Si N\oplus\Si z_A\, l_{\ol{A}})\oplus \Si(\Si R\oplus\Si z_B\, l_B)))\: .
\]
On the other hand, we have
\begin{align*}
\xi_A+(-1)^d s(f\ten \id)(\xi_B)=&-(1\ten1)\ten sDz_A-sDz_A\ten(1\ten1) \\
 &+(sD\ten sD)(d(z_A)+f(z_B)) \\
 &+(-1)^d s((1\ten1)\ten sDz_B+sDf(z_B)\ten(1\ten1) \\
 &-(sD\ten sD)(f\ten \id)(d(z_B)))\: .
\end{align*}
Its image in
\[
(l_A\oplus \Si V_A)\ten_{l_A^e}((l_{\ol{A}}\oplus \Si N\oplus\Si z_A\, l_{\ol{A}})\oplus \Si(\Si R\oplus\Si z_B\, l_B))
\]
is
\[
-1\ten sz_A-sz_A\ten1+(s\ten s)(\si_A'\, \eta\si_A'')+(-1)^d (\id\ten s)(1\ten sz_B -(s\ten s)(\si_B'\, \ol{\eta_B}\si_B'')) \: .
\]
The pseudocompact graded $l_A$-bimodule $l_A \oplus \Si V_A$ decomposes as
\[
l_{\ol{A}}\oplus l_B \oplus \Si F\oplus \Si N\oplus \Si z_A\, l_{\ol{A}}\: .
\]
Since the elements
\[
1\ten sz_A^\dagger+sz_A^\dagger\ten 1\in (l_{\ol{A}} \oplus \Si z_A\, l_{\ol{A}})\ten_{l_{\ol{A}}^e}(l_{\ol{A}} \oplus \Si z_A\, l_{\ol{A}}) \ko \eta \in N\ten_{l_A^e}N \: ,
\]
\[
1\ten sz_B^\dagger \in l_B \ten_{l_B^e}\Si z_B\, l_B \quad \mbox{and} \quad \ol{\eta_B}\in F\ten_{l_B^e}R
\]
are non-degenerate, the class represented by the above element is non-degenerate.

It remains to show that the kernel of the induced map
\[
\Si^{-1}\D\Ext^*_B(l_B, l_B) \longrightarrow \Si^{-1}\D\Ext^*_A(l_A, l_A)
\]
is a Lagrangian subspace concentrated in degrees less than or equal to $\frac{3-d}{2}$. Since quasi-isomorphisms induce equivalences of derived categories, we have the commutative square
\[
\begin{tikzcd}
  \Ext^*_A(l_A,l_A)\arrow{r}\arrow[no head]{d}{\wr} & \Ext^*_B(l_B,l_B)\arrow[no head]{d}{\wr} \\
  \Ext^*_{(T_{l_A}V_A,d)}(l_A,l_A)\arrow{r} & \Ext^*_{(T_{l_B}V_B,d)}(l_B,l_B)
\end{tikzcd}
\]
of graded vector spaces, where the vertical maps are bijective and compatible with the bilinear forms. It suffices to show that the kernel of the induced map
\[
\begin{tikzcd}
\Si^{-1}\D\Ext^*_{(T_{l_B}V_B,d)}(l_B, l_B) \arrow{r} & \Si^{-1}\D\Ext^*_{(T_{l_A}V_A,d)}(l_A, l_A)
\end{tikzcd}
\]
has the same property and hence we may and will assume that we have $f=\gamma$. By Corollary~12.12 of \cite{VandenBergh15}, the pseudocompact graded $l_A$-bimodule $\Si^{-1}\D\Ext^*_A(l_A, l_A)$ is isomorphic to $\Si^{-1}l_A\oplus V_A$ and the pseudocompact graded $l_B$-bimodule $\Si^{-1}\D\Ext^*_B(l_B, l_B)$ is isomorphic to $\Si^{-1}l_B\oplus V_B$. Since the element $w_A$ only contains cubic and higher terms, the element $f(v)=-\{w_A, v\}_{\omega_{\eta_B}}$ lies in $\prod_{p\geq 2}V_A^{\ten_{l_A} p}$ for all $v\in R$. So the kernel of the above map is isomorphic to $R\oplus z_B\, l_B$, which is concentrated in degrees less than or equal to $\frac{3-d}{2}$ and whose total dimension is half of that of $\Si^{-1}l_B\oplus V_B$. Since the bilinear form on $\Ext_B^*(l_B, l_B)$ is induced by the multiplication $m_2$, by Lemma~\ref{lem:symplectic}, the $(-1)$-shifted dual bilinear form on $\Si^{-1}\D \Ext_B^*(l_B, l_B)$ is induced by the dual of the component $d_2$ of the differential. Since the element $d(z_B)^\dagger=\eta_B$ lies in $(F\ten_{l_B^e}R)\oplus(R\ten_{l_B^e}F)$, we deduce that the pseudocompact graded subspace $R\oplus z_B\, l_B$ is isotropic. We infer that $R\oplus z_B\, l_B$ is a Lagrangian subspace of $\Si^{-1}l_B \oplus V_B$. This concludes the proof of the implication from i) to ii) in Theorem~\ref{thm:main}.

\subsection{Proof of the implication from ii) to i) in Theorem~\ref{thm:main}} \label{ss:proof from ii) to i)}

If we take homology of the triangle
\[
\begin{tikzcd}
\HP^{\red}(B)\arrow{r}&\HP^{\red}(A)\arrow{r}&\HP^{\red}(A,B)\arrow{r}&\Si \HP^{\red}(B)
\end{tikzcd}
\]
of complexes and use Theorem~8.1 of \cite{VandenBergh15}, we deduce that we have $\HP^{\red}_*(A,B)=0$. Similar to the necessity of Corollary 8.3 of \cite{VandenBergh15}, one can prove that the morphism $f\colon B \to A$ is exact relative $d$-Calabi--Yau.

\emph{Step 1. We replace $A$ and $B$ by dg tensor algebras.}
Let $p_A\colon A'\to A$ be a cofibrant replacement in the model category $\PCAlgc l_A$. Its image
under the forgetful functor to $\PCAlgc l_B$ is still a cofibrant replacement (indeed, the model
category $\PCAlgc l_B$ is dual to the model category of cocomplete dg coalgebras and therefore its fibrations are the surjections, \cf section~1.3.1 of \cite{Lefevre03}).
By Corollary~12.11 of \cite{VandenBergh15}, there are weak equivalences $p_{A'}\colon (T_{l_A}V_A,d)\to A'$ in the model category $\PCAlgc l_A$ and $p_B\colon (T_{l_B}V_B,d)\to B$ in the model category $\PCAlgc l_B$ such that the induced differentials on the pseudocompact graded bimodules $V_A$ and $V_B$ over $l_A$ respectively $l_B$ vanish. By sections~12.1 and 12.4 of \cite{VandenBergh15}, the objects $(T_{l_A}V_A,d)$ in $\PCAlgc l_A$ and $(T_{l_B}V_B,d)$ in $\PCAlgc l_B$ are fibrant and cofibrant. So there is a morphism  $h\colon (T_{l_B}V_B,d)\to A'$ satisfying $p_A\circ h=f\circ p_B$, \cf the diagram below. By the necessity in Lemma~4.24 of \cite{DwyerSpalinski95}, there is a morphism $q_{A'}\colon A'\to (T_{l_A}V_A,d)$ such that the composed morphism $q_{A'}\circ p_{A'}$ is homotopic to the identity morphism $\id_{T_{l_A}V_A}$. In particular, the morphism $q_{A'}$ is a weak equivalence. It gives rise to the commutative diagram
\[
\begin{tikzcd}
  (T_{l_B}V_B,d)\arrow{r}{q_{A'}\circ h}\arrow[equal]{d} & (T_{l_A}V_A,d) \\
  (T_{l_B}V_B,d)\arrow{r}{h}\arrow[swap]{d}{p_B} & A'\arrow[swap]{u}{q_{A'}}\arrow{d}{p_A} \\
  B\arrow[swap]{r}{f} & A\mathrlap{\: ,}
\end{tikzcd}
\]
where all the vertical morphisms are weak equivalences. It follows that
the morphism $f\colon B \to A$ is weakly equivalent to $q_{A'}\circ h\colon (T_{l_B}V_B,d) \to (T_{l_A}V_A,d)$ in the model category $\PCAlgc \varphi$. By sections~12.1 and 12.4 of \cite{VandenBergh15}, the dg algebra $A$ is quasi-isomorphic to $(T_{l_A}V_A,d)$ and similarly for $B$. Thus, since smoothness is preserved under quasi-isomorphisms, the pseudocompact dg algebras $(T_{l_A}V_A,d)$ and $(T_{l_B}V_B,d)$ are smooth. Moreover, since quasi-isomorphisms induce equivalences of derived categories, exact relative $d$-Calabi--Yau structures are also preserved under quasi-isomorphisms and we have the commutative square
\[
\begin{tikzcd}
  \Ext^*_A(l_A,l_A)\arrow{r}\arrow[no head]{d}{\wr} & \Ext^*_B(l_B,l_B)\arrow{d}{\wr} \\
  \Ext^*_{(T_{l_A}V_A,d)}(l_A,l_A)\arrow{r} & \Ext^*_{(T_{l_B}V_B,d)}(l_B,l_B)
\end{tikzcd}
\]
of graded vector spaces, where the vertical maps are bijective and compatible with the bilinear forms. So the kernel of the induced map
\[
\begin{tikzcd}
\Si^{-1}\D\Ext^*_{(T_{l_B}V_B,d)}(l_B, l_B) \arrow{r} & \Si^{-1}\D\Ext^*_{(T_{l_A}V_A,d)}(l_A, l_A)
\end{tikzcd}
\]
is also a Lagrangian subspace and concentrated in degrees less than or equal to $\frac{3-d}{2}$. Thus, we may and will assume that we have $A=(T_{l_A}V_A,d)$, $B=(T_{l_B}V_B,d)$ and that the induced differentials on $V_A$, $V_B$ vanish. By Corollary~12.12 of \cite{VandenBergh15}, we may and will assume that we have $V_A=(\Si^{-1}\D\Ext^*_A(l_A,l_A))_{\leq 0}$ and $V_B=(\Si^{-1}\D\Ext^*_B(l_B,l_B))_{\leq 0}$.

\emph{Step 2. We decompose $V_A$ and $V_B$.}
The following proof is inspired by the ice quiver case in section~\ref{ss:Ginzburg-Lazaroiu morphisms} and we advise the reader to refer to that section constantly in order to follow the reasoning. By the implication from (1) to (3) in Theorem~10.4 of \cite{VandenBergh15}, the pseudocompact graded $l_B$-bimodule $V_B$ decomposes as $V^c_B\oplus z_B\, l_B$ with $z_B$ an $l_B$-central element of degree $2-d$, and the pseudocompact graded $l_B$-bimodule $V^c_B$ is of finite total dimension concentrated in degrees $[3-d,0]$. Note that the Calabi--Yau dimension of $B$ may be less than $3$. We can nevertheless apply the theorem because the proof does not use the assumption that the Calabi--Yau dimension is at least $3$. We define $R_B$ as the intersection of $V^c_B$ with the kernel of the induced map
\[
\Si^{-1}\D\Ext^*_B(l_B, l_B) \longrightarrow \Si^{-1}\D\Ext^*_A(l_A, l_A)\: .
\]
It is a pseudocompact graded $l_B$-subbimodule of $V^c_B$. Since the field $k$ is perfect and the $k$-algebra $l_B$ is semisimple, the $k$-algebra $l_B$ is separable and so is $l_B^e$. Thus, the pseudocompact graded $l_B$-bimodule $V^c_B$ decomposes as $F_B\oplus R_B$. By assumption, the graded $l_B$-bimodule $R_B$ is concentrated in degrees $[3-d,\frac{3-d}{2}]$ and the graded $l_B$-bimodule $F_B$ is concentrated in degrees $[\frac{3-d}{2},0]$. Since the morphism $f$ maps $F_B$ bijectively onto its image, by composing with its inverse we may and will assume that the restriction of $f$ to $F_B$ is the identity map. Since the kernel of the induced map
\[
\Si^{-1}\D\Ext^*_B(l_B, l_B) \longrightarrow \Si^{-1}\D\Ext^*_A(l_A, l_A)
\]
is a Lagrangian homogeneous graded subspace and the morphism $f$ induces the \linebreak bijection from $l_B\iso \D\Ext^0_B(l_B, l_B)$ onto its image, the kernel must contain the component $\Si^{-1}\D\Ext^{d-1}_B(l_B, l_B)\liso z_B\, l_B$. Thus, the element $f(z_B)$ is of tensor order at least $2$, \ie lies in $\prod_{p\geq 2}V^{\ten_{l_A} p}_A$. Since the induced differential on $V_A$ vanishes, the element $f(d(z_B))=d(f(z_B))$ is of tensor order at least $3$. But the images of nonzero elements of $F_B \ten_{l_B^e}F_B$ under $f$ are of tensor order $2$ (because the intersection of $F_B$ and the kernel of the induced map $\Si^{-1}\D\Ext^*_B(l_B, l_B) \to  \Si^{-1}\D\Ext^*_A(l_A, l_A)$ is zero) and the pseudocompact graded $l_B$-bimodule $R_B$ is isotropic with respect to the bilinear form on $\Si^{-1}\D\Ext^*_B(l_B, l_B)$, we deduce that the quadratic component $(\eta_B)_2=(d(z_B^\dagger))_2$ lies in $(F_B\ten_{l_B^e}R_B)\oplus (R_B\ten_{l_B^e}F_B)$. Note that this shows that $F_B$ is also a Lagrangian subspace and that $F_B$ and $R_B$ are in duality.

Since the morphism $f\colon B \to A$ is relative left $d$-Calabi--Yau, by the necessity in the relative version of Proposition~4.4.1 in \cite{KellerLiu23b} (\cf~section~4.2 of \cite{BravDyckerhoff19} for the relative version in the non-pseudocompact setting), the restriction dg functor $\res \colon \pvd\!_{dg}A \to \pvd\!_{dg}B$ is relative right $d$-Calabi--Yau. Therefore, we have the isomorphism
\[
\begin{tikzcd}
  \cocone(\res)\arrow{r}\arrow{d}{\wr}&_A(L,M)\arrow{r}{\res}\arrow{d}{\wr}&_B(L,M)\arrow{r}\arrow{d}{\wr}&\cone(\res)\arrow{d}{\wr}\\
  \Si^{-d}D_A(M,L)\arrow{r}&\Si^{-d}D\cocone(\res)\arrow{r}&\Si^{1-d}D_B(M,L)\arrow[swap]{r}&\Si^{1-d}D_A(M,L)
\end{tikzcd}
\]
of triangles, which is bifunctorial in $L$, $M\in \pvd A$. In this diagram, we write $_A(?,-)$ for $\Hom_{\pvd\!_{dg}A}(?,-)$ and similarly for $_B(?,-)$.
Since the pseudocompact dg $A$-module $l_A$ is finite-dimensional and its restriction $f_*(l_A)$ is isomorphic to $l_B$, if we let $L=M=l_A$ and take homology, we obtain the isomorphism
\begin{equation} \label{eq:diagram}
\adjustbox{max width=\textwidth}{
\begin{tikzcd}
_{B}(l_B,\Si^{p-1}l_B)\arrow{r}\arrow{d}{\wr}&H^p(\cocone(\res))\arrow{r}\arrow{d}{\wr}&_{A}(l_A,\Si^p l_A)\arrow{r}{\res}\arrow{d}{\wr}& _{B}(l_B,\Si^p l_B)\arrow{d}{\wr} \\
D_{B}(l_B,\Si^{d-p}l_B)\arrow[swap]{r}&D_{A}(l_A,\Si^{d-p}l_A)\arrow{r}&D H^{d-p}(\cocone(\res))\arrow{r}&D_{B}(l_B,\Si^{d-1-p}l_B)
\end{tikzcd}
}
\end{equation}
of long exact sequences. In this diagram, we write $_A(?,-)$ for $\Hom_{\pvd A}(?,-)$ and similarly for $_B(?,-)$. Since the dg algebra $A$ is connective, we have
\[
\begin{tikzcd}
\Ext^p_A(l_A,l_A) & \Ext^p_{H^0(A)}(l_A,l_A)=0 \arrow[swap]{l}{\sim}
\end{tikzcd}
\]
for all $p<0$ and similarly for $B$. By the diagram~(\ref{eq:diagram}), we have $\Ext^p_A(l_A,l_A)=0$ for all $p>d$. It follows that the graded $l_A$-bimodule $V_A$ is concentrated in degrees $[1-d,0]$. Since the dg $A$-module $l_A$ is finite-dimensional, by part~b) of Proposition~\ref{prop:finite dimension}, the graded vector space
\[
\Ext^*_A(l_A,l_A)\simeq \bigoplus_{p=0}^d \Hom_{\pvd A}(l_A,\Si^p l_A)
\]
is of finite total dimension and so is $V_A$.

The diagram~(\ref{eq:diagram}) yields an isomorphism
\begin{equation} \label{eq:duality}
\adjustbox{max width=\textwidth}{
\begin{tikzcd}
\ker(\res\colon \Ext^*_A(l_A,l_A) \to \Ext^*_B(l_B,l_B))\arrow{r}{\sim} & D\ker(\res\colon \Ext^{d-*}_A(l_A,l_A) \to \Ext^{d-*}_B(l_B,l_B)) \: .
\end{tikzcd}
}
\end{equation}
of graded $l_A$-bimodules. Since the pseudocompact dg algebra $B$ is connective and \linebreak $(d-1)$-Calabi--Yau, we have
\[
\begin{tikzcd}
\Ext^d_B(l_B,l_B)\arrow{r}{\sim} & D\Ext^{-1}_B(l_B,l_B)\arrow{r}{\sim} & D\Ext^{-1}_{H^0(B)}(l_B,l_B)=0 \: .
\end{tikzcd}
\]
If we let $*=0$ in the isomorphism (\ref{eq:duality}), we obtain the $l_A$-bimodule isomorphism
\[
\begin{tikzcd}
\ker(\res \colon \Hom_A(l_A,l_A) \to \Hom_B(l_B,l_B))\arrow{r}{\sim} & D\Ext^d_A(l_A,l_A)\: .
\end{tikzcd}
\]
The canonical projection from $l_A$ to $l_{\ol{A}}$ is an $l_{A}$-central generator of the $l_A$-bimodule on the left hand side and its annihilator is $l_B$. We define $sz_A$ as its image under the above $l_A$-bimodule isomorphism. Then the element $z_A$ is also an $l_A$-central generator of the pseudocompact graded $l_A$-bimodule $\Si^{-1}\D\Ext^d_A(l_A,l_A)$ which is of degree $1-d$ and its annihilator is $l_B$. We deduce that the pseudocompact graded $l_A$-bimodule $V_A$ decomposes as $V^c_A\oplus z_A\, l_{\ol{A}}$, where the pseudocompact graded $l_A$-bimodule $V^c_A$ is concentrated in degrees $[2-d,0]$.

Next we decompose $V_A$ further. We denote by $f_1 \colon V_B \to V_A$ the truncation of the \linebreak $(-1)$-shifted dual of the restriction map $\res \colon \Ext^*_A(l_A,l_A) \to \Ext^*_B(l_B,l_B)$. We claim that $\im f_1$ is an $l_A$-subbimodule of $V_A$. Clearly, it is stable under the actions of $l_B$ from both sides. Note that both the elements $1_A$ and $1_B$ act on $\im f_1$ by the identity from both sides, so the element $1_A-1_B$ annihilates $\im f_1$. This implies that the actions of $l_{\ol{A}}$ on $\im f_1$ from both sides are zero. Thus, the graded subspace $\im f_1$ is stable under the actions of $l_A$ from both sides, which means that it is an $l_A^e$-submodule of $V_A$. We define $F_A$ as $\im f_1$. We use the map $f_1$ to
identify $F_B$ with $F_A$. By degree reasons, the graded $l_A^e$-submodule $F_A$ is contained in $V^c_A$. Since the algebra $l_A^e$ is semisimple, the pseudocompact graded $l_A$-bimodule $V^c_A$ decomposes as $F_A\oplus N_A$, where the pseudocompact graded $l_A$-bimodule $N_A$ is concentrated in degrees $[2-d,0]$.

\emph{Step 3. We prove that $\eta_2$ and $(\eta_B)_2$ are non-degenerate.}
Since the element $z_A$ is \linebreak $l_A$-central and the element $z_B$ is $l_B$-central, we can write
\begin{align*}
&d(z_A)=\si_A'\, \eta_A\,\si_A''=\si_A'\, (\eta_A)_2\, \si_A''+\si_A'\, (\eta_A)_3\, \si_A''+\cdots \: , \\
&f(z_B)=\si_A'\, f(z_B)^\dag \si_A''=\si_A'\, f_2(z_B)^\dag \si_A''+\si_A'\, f_3(z_B)^\dag \si_A''+\cdots \: , \\
&d(z_B)=\si_B'\, \eta_B\,\si_B''=\si_B'\, (\eta_B)_2\, \si_B''+\si_B'\, (\eta_B)_3\, \si_B''+\cdots \: ,
\end{align*}
where $(\eta_A)_n$ and $f_n(z_B)^\dag$ are elements of $((V_A^c)^{\ten_{l_A}n})_{l_A}$ and $(\eta_B)_n$ is an element of \linebreak $((V_B^c)^{\ten_{l_B}n})_{l_B}$ for all $n\geq 2$. By the implication from (1) to (3) in Theorem~10.4 of \cite{VandenBergh15}, the element $(\eta_B)_2$ is non-degenerate in $V^c_B\ten_{l_B^e}V^c_B$. Note that the Calabi--Yau dimension of $B$ may be less than $3$. We can nevertheless apply the theorem because the proof does not use the assumption that the Calabi--Yau dimension is at least $3$. Put $\eta=\eta_A +f(z_B)^\dag$. Let us prove that the element $\eta_2$ is non-degenerate in $N_A\ten_{l^e_A}N_A$. The isomorphism
\[
\Ext^*_B(l_B,l_B)\xlongrightarrow{_\sim} D\Ext^{d-1-*}_B(l_B,l_B)
\]
of graded $l_B$-bimodules gives rise to a non-degenerate $l_B$-bilinear form
\[
\langle ?,-\rangle_B \colon \Ext^*_B(l_B,l_B)\ten_{l_B^e}\Ext^*_B(l_B,l_B) \longrightarrow k
\]
of degree $1-d$. The isomorphism (\ref{eq:duality}) of graded $l_A$-bimodules gives rise to a non-degenerate $l_A$-bilinear form
\[
\langle ?,-\rangle_A \colon \Ext_{A,B}^{*}(l_A,l_A)\ten_{l_A^e}\Ext_{A,B}^{*}(l_A,l_A) \longrightarrow k
\]
of degree $-d$, where $\Ext_{A,B}^{*}(?,-)$ is defined as the kernel of the restriction map
\[
\Ext^*_A(?,-) \longrightarrow \Ext^*_B(?,-) \: .
\]
We choose $A_\infty$-quasi-isomorphisms (\cf section~\ref{ss:Ainfty-algebras and Ainfty-modules} for a reminder on $A_\infty$-structures)
\[
\Ext^*_A(l_A,l_A) \longrightarrow \RHom_A(l_A,l_A) \quad \mbox{and} \quad \RHom_B(l_B,l_B) \longrightarrow \Ext^*_B(l_B, l_B)
\]
and define restriction $\res \colon \Ext^*_A(l_A,l_A) \to \Ext^*_B(l_B, l_B)$ as the composed 
$A_\infty$-algebra morphism
\[
\begin{tikzcd} 
\Ext^*_A(l_A,l_A) \arrow{r} &  \RHom_A(l_A,l_A) \arrow{r}{\res} & \RHom_B(l_B,l_B) \arrow{r} & \Ext^*_B(l_B, l_B)\: .
\end{tikzcd}
\]
We claim that we have 
\begin{equation} \label{eq:2nd component}
\langle g,h\rangle_A=(sz_A)(m_2(g,h))+(sz_B)(\res_2(g,h))
\end{equation}
for all $g$ and $h$ in $\Ext_{A,B}^{*}(l_A,l_A)$, where $\res_2$ denotes the second component of the \linebreak $A_\infty$-morphism $\res$.

We first consider the case that $g$ lies in $\Ext_{A,B}^{p}(l_A,l_{\ol{A}})$ and $h$ lies in $\Ext_{A,B}^{d-p}(l_{\ol{A}},l_A)$. By the functoriality of the isomorphism (\ref{eq:duality}), we obtain the commutative square
\[
\begin{tikzcd}
  \Ext_{A,B}^{0}(l_A,l_A)\arrow{r}{\sim}\arrow[swap]{d}{_{A}(g,l_A)}&D\Ext_{A,B}^{d}(l_A,l_A)\arrow{d}{D_{A}(l_A,g)}\\
  \Ext_{A,B}^{p}(l_A,l_A)\arrow{r}{\sim}&D\Ext_{A,B}^{d-p}(l_A,l_A)\mathrlap{\: .}
\end{tikzcd}
\]
By comparing the images of the canonical projection from $l_A$ to $l_{\ol{A}}$ under the two compositions in this commutative square we find that we have $\langle g,h\rangle_A=(sz_A)(m_2(g,h))$. 
Since $g$ lies in $\Ext^{p}_A(l_A,l_{\ol{A}})$ and $h$ lies in $\Ext^{d-p}_A(l_{\ol{A}},l_A)$, we have $\res_2(g,h)=0$. Therefore, the claimed equality~(\ref{eq:2nd component}) holds in this case.

We now consider the case that $g$ lies in $\Ext_{A,B}^{p}(l_A,l_B)$ and $h$ lies in $\Ext_{A,B}^{d-p}(l_A,l_B)$. By taking homology of the commutative square
\[
\begin{tikzcd}
  \RHom_A(l_A,l_A)\arrow{r}{\res}\arrow{d}{\wr}&\RHom_B(l_B,l_B)\arrow{d}{\wr}\\
  \Si^{-d}D\cocone(\res)\arrow{r}&\Si^{1-d}D\RHom_B(l_B,l_B)
\end{tikzcd}
\]
of dg modules over $\RHom_A(l_A,l_A)$ we obtain a commutative square
\[
\begin{tikzcd}
  \Ext^*_A(l_A,l_A)\arrow{r}{\res}\arrow{d}{\wr}&\Ext^*_B(l_B,l_B)\arrow{d}{\wr}\\
  D\Ext_{A,B}^{d-*}(l_A,l_A)\oplus D\Ext_{B,A}^{d-1-*}(l_A,l_A)\arrow[swap]{r}{[0~i]}&D\Ext^{d-1-*}_B(l_B,l_B)
\end{tikzcd}
\]
of $A_\infty$-modules over the $A_\infty$-algebra $\Ext^*_A(l_A,l_A)$, where $\Ext_{B,A}^{*}(?,-)$ is defined as the cokernel of the restriction map
\[
\Ext^*_A(?,-) \longrightarrow \Ext^*_B(?,-)\: .
\]
By considering the second component of the composed $A_\infty$-module morphism from the upper-left corner to the lower-right corner we find that we have
\[
\begin{bmatrix}
0 & i
\end{bmatrix}
_2(\langle g,?\rangle_A,h)+
\begin{bmatrix}
0 & i
\end{bmatrix}
_1(\langle ?,-\rangle_A)_2(g,h)=\langle \res_2(g,h),?\rangle_B+(\langle ?,-\rangle_B)_2(\res_1(g),h) \: .
\]
Since $g$ lies in $\Ext_{A,B}^p(l_A,l_B)$ and the vector space $\Ext_{B,A}^0(l_A,l_A)$ vanishes, it reduces to $
\begin{bmatrix}
0 & i
\end{bmatrix}
_2(\langle g,?\rangle_A,h)=\langle \res_2(g,h),?\rangle_B$. We calculate the left hand side as follows. Denote by $M$ the $A_\infty$-module
\[
\cocone(D\res \colon D\Ext^{d-1-*}_B(l_B,l_B) \to D\Ext^{d-1-*}_A(l_A,l_A))\: .
\]
As a graded vector space, it decomposes (non-canonically) as
\[
D\ol{\Ext}_{A,B}^{d-*}(l_A,l_A)\oplus D\Ext_{A,B}^{d-*}(l_A,l_A)\oplus D\Ext_{B,A}^{d-1-*}(l_A,l_A)\oplus D\ol{\Ext}_{A,B}^{d-1-*}(l_A,l_A)\: ,
\]
where $\ol{\Ext}_{A,B}^*(?,-)$ is defined as the image of the restriction map
\[
\Ext^*_A(?,-) \longrightarrow \Ext^*_B(?,-)\: .
\]
Let $I$ be the canonical injection
\[
\begin{tikzcd}
D\Ext_{A,B}^{d-*}(l_A,l_A)\oplus D\Ext_{B,A}^{d-1-*}(l_A,l_A)\arrow{r} & M
\end{tikzcd}
\]
and $P$ the canonical projection
\[
\begin{tikzcd}
M\arrow{r} & D\Ext_{A,B}^{d-*}(l_A,l_A)\oplus D\Ext_{B,A}^{d-1-*}(l_A,l_A)
\end{tikzcd}
\]
and $H$ the composition of the canonical maps
\[
\begin{tikzcd}
M \arrow{r} & D\Ext^{d-*}_A(l_A,l_A) \arrow{r} & D\ol{\Ext}_{A,B}^{d-*}(l_A,l_A) \arrow{dl} & \\
& D\ol{\Ext}_{A,B}^{d-1-(*-1)}(l_A,l_A) \arrow{r} & D\Ext^{d-1-(*-1)}_B(l_B,l_B) \arrow{r} & M\: ,
\end{tikzcd}
\]
where the third map is of degree $-1$. They satisfy
\[
P\circ I=\id_{H^*(M)}\ko \id_M-I\circ P=d(H)\ko H\circ I=0\ko P\circ H=0\ko H^2=0\: .
\]
This means that $(I,P,H)$ exhibits $H^*(M)$ as a deformation retract of $M$, \cf section~1.1 of \cite{Vallette14}. The morphism
\[
\begin{bmatrix}
0 & i
\end{bmatrix}
\colon
\begin{tikzcd}
D\Ext_{A,B}^{d-*}(l_A,l_A)\oplus D\Ext_{B,A}^{d-1-*}(l_A,l_A) \arrow{r} & D\Ext^{d-1-*}_B(l_B,l_B)
\end{tikzcd}
\]
of $A_\infty$-modules is the composition
\[
\begin{tikzcd}[ampersand replacement=\&]
H^*(M)\arrow{rr}{
\begin{bsmallmatrix}
0 & 0 \\
\id & 0 \\
0 & \id \\
0 & 0
\end{bsmallmatrix}
} \& \& M \arrow{rr}{[0~0~i~i]} \& \& D\Ext^{d-1-*}_B(l_B,l_B)\: ,
\end{tikzcd}
\]
where the second morphism is strict. By the variant of Proposition~7 of \cite{KontsevichSoibelman01} for \linebreak $A_\infty$-modules, \cf also Theorem~5 of \cite{Vallette14}, it follows that we have
\begin{align*}
&\begin{bmatrix} 0 & i \end{bmatrix}_2(\langle g,-\rangle_A,h) \\
=&\begin{bmatrix} 0 & 0 & i & i \end{bmatrix}_1(\begin{bmatrix} 0 & 0 \\ \id & 0 \\ 0 & \id \\ 0 & 0 \end{bmatrix}_2(\langle g,-\rangle_A,h)) \\
=&\begin{bmatrix} 0 & 0 & i & i \end{bmatrix}_1(H(m_2(I(\langle g,-\rangle_A),h))) \\
=&\begin{bmatrix} 0 & 0 & i & i \end{bmatrix}_1(H(m_2(\langle g,-\rangle_A,h))) \\
=&\begin{bmatrix} 0 & 0 & i & i \end{bmatrix}_1(H(\langle g,h\circ-\rangle_A))\: .
\end{align*}
Since $g$ lies in $\Ext^p_A(l_A,l_B)$ and $h$ lies in $\Ext^{d-p}_A(l_B,l_A)$, the element $\langle g,h\circ-\rangle_A$ actually lies in the graded subspace $D\Ext_{B,A}^{d-*}(l_A,l_A)$. Thus, we obtain
\[
\begin{bmatrix}
0 & 0 & i & i
\end{bmatrix}
_1(H(\langle g,h\circ-\rangle_A))=
\begin{bmatrix}
0 & 0 & i & i
\end{bmatrix}
_1(\langle g,h\circ-\rangle_A)=\langle g,h\circ-\rangle_A\: .
\]
We conclude that we have $\langle g,h\circ-\rangle_A = \langle \res_2(g,h),-\rangle_B$. If we evaluate this identity
at $\id_{l_B}$, we deduce that we have $\langle g,h\rangle_A=(sz_B)(\res_2(g,h))$.
Since $g$ lies in $\Ext^p_A(l_A,l_B)$ and $h$ lies in $\Ext^{d-p}_A(l_B,l_A)$, the element $m_2(g,h)$ lies in $\Ext^d_A(l_B,l_B)$. But the $l_A$-bimodule $D\Ext^d_A(l_B,l_B)$ is isomorphic to $\Ext_{A,B}^0(l_B,l_B)=0$, so we have $(sz_A)(m_2(g,h))=0$. Therefore, the claimed equality~(\ref{eq:2nd component}) also holds in this case.

Finally, since the graded vector space $\Ext_{A,B}^*(l_A,l_A)\ten_{l_A^e}\Ext_{A,B}^{d-*}(l_A,l_A)$
decomposes as
\[
(\Ext_{A,B}^*(l_A,l_{\ol{A}})\ten_{l_A^e}\Ext_{A,B}^{d-*}(l_{\ol{A}},l_A))\oplus(\Ext_{A,B}^*(l_A,l_B)\ten_{l_A^e}\Ext_{A,B}^{d-*}(l_B,l_A))\: ,
\]
the general case can be reduced to the two cases above. In conclusion, the claimed
equality~(\ref{eq:2nd component}) holds in general. By taking the $(-1)$-shifted dual 
we deduce that the image of $1$ under the composed map
\[
\begin{tikzcd}
k\arrow{r}&\Si^{-1}\D\Ext^d_A(l_A,l_A)\oplus \Si^{-1}\D\Ext^{d-1}_B(l_B,l_B)
\arrow{d}{[d_2~f_2]} \\
 & \bigoplus_{p=1}^{d-1}(\Si^{-1}\D\Ext_{A,B}^p(l_A,l_A)\ten_{l_A^e}\Si^{-1}\D\Ext_{A,B}^{d-p}(l_A,l_A))
\end{tikzcd}
\]
is non-degenerate. Now $\eta_2$ was defined as this image. 

\emph{Step 4. We reduce to the case where $\eta$ and $\eta_B$ are sums of graded commutators.}
Recall that the exact relative $d$-Calabi--Yau structure on the morphism $f\colon B \to A$ is given by a class
in the relative cyclic homology $\HC_{d-1}(A,B)$.
Using the relative version of the description of reduced cyclic homology given in Proposition~\ref{prop:cyclic homology} 
we choose a representative $(\chi_A,s\chi_B)$ of the underlying reduced cyclic class of this class,  where 
\begin{itemize}
\item[a)] the element $\chi_B$ of $(B/l_B)_{l_B}$ is of degree $2-d$ and we have 
\[
\ol{d(\chi_B)}=0 \mbox{ in } (B/(l_B+[B,B]))_{l_B}\: ,
\]
\item[b)] the element $\chi_A$ of $(A/l_A)_{l_A}$ is of degree $1-d$ and we have 
\[
\ol{d(\chi_A)+f(\chi_B)}=0 \mbox{ in } (A/(l_A+[A,A]))_{l_A}\: .
\]
\end{itemize}
In other words, we have
\[
d(\chi_B)=\sum_j[x^B_j,y^B_j]\quad \mod [l_B,?]
\]
for suitable $x^B_j$ and $y^B_j$ in $B/l_B$ and 
\[
d(\chi_A)+f(\chi_B)=\sum_i[x^A_i,y^A_i]\quad \mod [l_A,?]
\]
for suitable $x^A_i$ and $y^A_i$ in $A/l_A$. By the relative version of Proposition~\ref{prop:connecting morphism}, the class $[(\ol{\chi_A},s\ol{\chi_B})]$ is mapped to $[((0,-sD\chi_A),s(0,sD\chi_B))]$ by Connes' map $B$. Since the relative Hochschild class $[((0,-sD\chi_A),s(0,sD\chi_B))]$ in $\HH_d(A,B)$ is non-degenerate, the Hochschild class $[(0,sD\chi_B)]$ in $\HH_{d-1}(B)$ is non-degenerate (and thus gives rise to a \linebreak $(d-1)$-Calabi--Yau structure on $B$). By the necessity in Lemma~10.2 of \cite{VandenBergh15}, the element $\chi_B$ is of the form $u_Bz_B^\dagger+v_B$ for an invertible central element  $u_B$ of $l_B$ and an element  $v_B$ of $B$ of tensor order at least $2$. The homotopy cofibre $\ol{A}$ of $f\colon B\to A$ is isomorphic to $(T_{l_{\ol{A}}}(l_{\ol{A}}\ten_{l_A}V_A\ten_{l_A}l_{\ol{A}}),d)$, where the graded $l_{\ol{A}}$-bimodule $l_{\ol{A}}\ten_{l_A}V_A\ten_{l_A}l_{\ol{A}}$ is concentrated in degrees $[1-d,0]$ and its component of degree $1-d$ is
\[
\begin{tikzcd}
l_{\ol{A}}\ten_{l_A}\Si^{-1}\D\Ext^d_A(l_A,l_A)\ten_{l_A}l_{\ol{A}}\arrow{r}{\sim} & \Si^{-1}\D\Ext^d_A(l_A,l_A)=z_A\, l_{\ol{A}},
\end{tikzcd}
\]
where $z_A$ is an $l_{\ol{A}}$-central element. By the variant of Corollary~7.1 of \cite{BravDyckerhoff19} for pseudocompact dg algebras, the homotopy cofibre is $d$-Calabi--Yau and, more precisely, the image of $[(0,-sD\chi_A)]$ in
\[
\HH_{d}(\ol{A})=H^{-d}(\cone(\del_1 \colon (\Omega^1_{l_{\ol{A}}}\ol{A})_\natural \to \ol{A}_{l_{\ol{A}}}))
\]
is non-degenerate. By the necessity in Lemma~10.2 of \cite{VandenBergh15} again, the image of $\chi_A$ in $(\ol{A}/l_{\ol{A}})_{l_{\ol{A}}}$ is of the form $u_Az_A^\dagger+v_{\ol{A}}$ for an invertible central element $u_A$ of $l_{\ol{A}}$ and an element $v_{\ol{A}}$ of $\ol{A}$ of tensor order at least $2$. Therefore, the element $\chi_A$ is of the form $u_Az_A^\dagger+v_A$, where $v_A$ is an element of $A$ of tensor order at least $2$. Put 
\[
z_A'=\si_A'\, \chi_A\, \si_A''\ko z_B'=\si_B'\, \chi_B\, \si_B'' \ko V_A'=V^c_A\oplus z_A'\, l_{\ol{A}}\mbox{ and }V_B'=V^c_B\oplus z_B'\, l_B \: .
\]
We have $d(z_B')=\si_B'\, d(\chi_B)\si_B''$ such that $d(\chi_B)$ is a sum of graded commutators in $T_{l_B}V^c_B$ and
\[
(d(\chi_B))_2=(d(u_B\, z_B^\dagger+v_B))_2=u_B(\eta_B)_2
\]
is an element of $(F_B\ten_{l_B^e}R_B)\oplus (R_B\ten_{l_B^e}F_B)$ whose image in $V^c_B\ten_{l_B^e}V^c_B$ is non-degenerate. We also have $d(z_A')+f(z_B')=\si_A'(d(\chi_A)+f(\chi_B))\si_A''$ such that $d(\chi_A)+f(\chi_B)$ is a sum of graded commutators in $T_{l_A}V^c_A$. The element $d(v_A)$ of $A$ is of tensor order at least $3$ because $d_1$ vanishes. The element $f(v_B)$ of $A$ is also of tensor order at least $3$ because the quadratic terms of $v_B$ must contain a tensor factor in $R_B$ since they are of degree $2-d$. So the element
\[
(d(\chi_A)+f(\chi_B))_2=(d(u_A\, z_A^\dagger+v_A)+f(u_B\, z_B^\dagger+v_B))_2=(u_A+u_B)\eta_2
\]
is also non-degenerate in $N_A\ten_{l_A^e}N_A$. Let $q_A\colon (T_{l_A}V_A,d) \to (T_{l_A}V_A',d)$ be the morphism of $l_A$-augmented pseudocompact dg algebras which restricts to the identity on $V^c_A$ and maps $z_A$ to $z_A'$. Since the element $u_A$ is invertible in $l_{\ol{A}}$, the induced morphism $(q_A)_1\colon V_A \to V_A'$ of $l_A$-bimodules is an isomorphism. This means that $q_A$ is an isomorphism. Hence we have $T_{l_A}V_A=T_{l_A}V_A'$ and similarly for $B$.
After replacing $z_A$, $z_B$, $V_A$, $V_B$ by $z_A'$, $z_B'$, $V_A'$, $V_B'$, respectively, we may and will assume that $\eta$ is a sum of graded commutators in $T_{l_A}V^c_A$ and that $\eta_B$ is a sum of graded commutators in $T_{l_B}V^c_B$.

\emph{Step 5. We remove the higher terms from $\eta$ and $\eta_B$.}
By the implication from (3) to (2) in Theorem~10.4 of \cite{VandenBergh15}, 
we can remove all the terms of tensor degree at least $3$ from $\eta_B$. Note that the Calabi--Yau dimension of $B$ may be less than $3$. We can nevertheless apply the theorem because
the proof does not use the assumption that the Calabi--Yau dimension is at least $3$. Thus, we may and will assume that we have $(\eta_B)_n=0$ for all $n\geq 3$.
We will now remove all the terms of tensor degree at least $3$ from $\eta$. Since $\eta$ is a sum of graded commutators, it lies in the sum $([F_A,T_{l_A}V^c_A]+[N_A,T_{l_A}V^c_A])_{l_A}$. In the spirit of the proof of the implication from (3) to (2) in Theorem~10.4 of \cite{VandenBergh15}, we will first use induction to remove the second summand of a chosen sum decomposition of $\eta$. Assume that we have shown that the $\varphi$-augmented morphism $f\colon B\to A$ between pseudocompact dg algebras is weakly equivalent to one such that $\eta_3$, \ldots, $\eta_{n-1}$ lie in $[F_A,T_{l_A}V^c_A]_{l_A}$ for some $n\geq 3$. We will construct an isomorphism $q\colon (T_{l_A}V_A,d) \to (T_{l_A}V_A,d')$ of $l_A$-augmented pseudocompact dg algebras which is determined by $q(v)=v+\beta(v)$ for $v$ in $V_A$ such that the components of tensor degrees $[3, n]$ of the element $d'(z_A)+(q\circ f)(z_B)$ lie in $[F_A,T_{l_A}V^c_A]$, where $\beta$ is an $l^e_A$-linear map from $V_A$ to $(V^c_A)^{\ten_{l_A}n-1}$ which vanishes on $F_A \oplus z_A\, l_{\ol{A}}$. Then we have $d'=q\circ d\circ q^{-1}$. This implies the equalities
\begin{align*}
d'(z_A)+(q\circ f)(z_B) &=q(d(z_A)+f(z_B)) \\
                        &=\si_A'\, q(\eta)\si_A'' \\
                        &=\si_A'\, \eta_2\, \si_A''+\si_A'\, \eta_n\, \si_A''+\si_A'\, \beta(\eta_2')\eta_2''\, \si_A''+\si_A'\, \eta_2'\beta(\eta_2'')\, \si_A''+\cdots \: ,
\end{align*}
where the omitted terms lie in $\prod_{p\geq n+1}(V_A^c)^{\ten_{l_A} p}+[F_A,T_{l_A}V^c_A]$.
It suffices to find an \linebreak $l^e_A$-linear map $\beta$ satisfying $\eta_n+\beta(\eta_2')\eta_2''+\eta_2'\beta(\eta_2'')\in [F_A,T_{l_A}V^c_A]_{l_A}$. Since $\eta$ is a sum of graded commutators, its quadratic component $\eta_2$ is graded anti-symmetric. If we apply $\beta \ten \id_{N_A}$ to $\eta_2'\eta_2''=-(-1)^{|\eta_2'||\eta_2''|}\eta_2''\eta_2'$, the condition can be written as 
\[
\eta_n+[\eta_2',\beta(\eta_2'')]\in [F_A,T_{l_A}V^c_A]_{l_A}\: .
\]
Since this is a linear algebra problem, we may and will assume that the field $k$ is algebraically closed. It is also invariant under Morita equivalences, so we may and will assume that the $k$-algebra $l_A$ equals $\prod_{i=1}^m ke_i$. Since the element $\eta_2$ is non-degenerate and graded anti-symmetric in $N_A\ten_{l_A^e}N_A$, if we choose a suitable homogeneous $k$-basis $\ca$ of $N_A$ endowed with an involution $*$ which maps $a$ to $a^*$ (note that the involution $*$ may have fixed points), we can write the element $\eta_2$ as the sum $\sum_{a\in \ca/*}[a,a^*]$. So we have
\[
[\eta_2',\beta(\eta_2'')]=\sum_{a\in \ca/*}([a,\beta(a^*)]-(-1)^{|a||a^*|}[a^*,\beta(a)])\: .
\]
The component $\eta_n$ can be written as
\[
\eta_n=\sum_{a\in \ca/*}([a,\eta_a]+[a^*,\eta_{a^*}])+\cdots \quad \mod [l_A,?]
\]
for certain linear combinations $\eta_a$ and $\eta_{a^*}$ of paths of length $n-1$, where we let $\eta_a=\eta_{a^*}$ if $a$ equals $a^*$ and we omit the terms in $[F_A,T_{l_A}V^c_A]_{l_A}$. Now it suffices to put \mbox{$\beta(a)=(-1)^{|a||a^*|}\eta_{a^*}$} and $\beta(a^*)=-\eta_a$. Note that if $a$ equals $a^*$, then $|a|=|a^*|$ is odd and hence $-(-1)^{|a||a^*|}$ equals $1$, as it should. For degree reasons, the graded subalgebra $T_{l_A}V^c_A$ is stable under the morphism $q$. This implies that the element $q(\eta)$ is also a sum of graded commutators in $(T_{l_A}V^c_A)_{l_A}$ and its quadratic component $q(\eta)_2=\eta_2$ is also non-degenerate in $N_A\ten_{l_A^e}N_A$. Therefore, we have shown that the $\varphi$-augmented morphism $f$ between pseudocompact dg algebras is
isomorphic (in the model category $\PCAlgc \varphi$) to $q\circ f$ such that $\eta_3$, \ldots, $\eta_n$ lie in $[F_A,T_{l_A}V^c_A]_{l_A}$ and the properties in the previous steps are preserved. Since the dg algebra $A$ is pseudocompact, by taking the limit of this procedure we may and will assume that we have $\eta_n \in [F_A,T_{l_A}V^c_A]_{l_A}$ for all $n\geq 3$. Next, we will replace $f$ with a homotopic morphism to remove $\eta_n$ for all $n\geq 3$. Since the element $\eta_B$ is non-degenerate and graded anti-symmetric in $(F_B\oplus R_B)\ten_{l^e_B}(F_B\oplus R_B)$, by choosing a suitable homogeneous $k$-basis $\cb$ of $F_B$ we can write the element $\eta_B$ as the sum $\sum_{b\in \cb}[b,b^*]$. Here the elements $b^*$ form the homogeneous $k$-basis of $R_B$ which is $k$-dual to $\cb$ with respect to $\eta_B$. Because now the element $\sum_{n\geq 3}\eta_n$ lies in $[F_A,T_{l_A}V^c_A]_{l_A}$, it can be written as $\sum_{b\in \cb}[b,\eta_b]$, where the elements $\eta_b$ are of tensor order at least $2$. To remove the terms of tensor degree at least $3$ from $\eta$, we will construct a continuous $l^e_B$-linear map $h\colon B \to A$ of degree $-1$ which vanishes on $l_B$ and satisfies
\[
h(b_1b_2)=h(b_1)(f(b_2)+d(h(b_2))+h(d(b_2)))+(-1)^{|b_1|}f(b_1)h(b_2)
\]
for all $b_1$ and $b_2$ in $B$. Using double induction on the pair formed by the internal degree and the tensor degree we see that $h$ is determined by its restriction to $V_B$, which can be chosen arbitrarily.
Put $g=f+d\circ h+h\circ d$. It follows from the construction that the morphism $g$ is also a $\varphi$-augmented morphism between pseudocompact dg algebras
and that the map $h$ is an $f$-$g$-derivation of degree $-1$. Since we have
\[
d(z_A)+g(z_B)=d(z_A)+f(z_B)+d(h(z_B))+h(d(z_B))\: ,
\]
it suffices to find a map $h$ satisfying $h(z_B)=0$ and $\sum_{b\in \cb}[b,\eta_b]+h(\eta_B)=0$ in $(T_{l_A}V^c_A)_{l_A}$. Since this is a linear algebra problem, we may and will assume that the field $k$ is algebraically closed. It is also invariant under Morita equivalences, so we may and will assume that the $k$-algebra $l_A$ equals $\prod_{i=1}^m ke_i$. Then the above equation can be written as
\begin{align*}
 & \sum_{b\in \cb}[b,\eta_b]+\sum_{b\in \cb}(h(b)(f(b^*)+d(h(b^*))+h(d(b^*)))+(-1)^{|b|}f(b)h(b^*)\\
- & (-1)^{|b||b^*|}(h(b^*)(f(b)+d(h(b))+h(d(b)))+(-1)^{|b^*|}f(b^*)h(b)))=0\: .
\end{align*}
Note that for degree reasons, the differential $d(b)$ must lie in $T_{l_B}F_B$ if $b$ lies in $F_B$. So the $l^e_B$-linear map $h$ which vanishes on $F_B\oplus z_B\, l_B$ and satisfies $h(b^*)=-(-1)^{|b|}\eta_b$ is a solution to this equation. Therefore, the element $(d(z_A)+g(z_B))^\dagger$ does not have terms of tensor degree at least $3$ and its quadratic component equals $\eta_2$. By part~a) of Proposition~1.3.4.1 of \cite{Lefevre03} (translated from cocomplete augmented dg coalgebras to complete augmented pseudocompact dg algebras), part~(ii) of Lemma~4.21 and the sufficiency in Lemma~4.24 of \cite{DwyerSpalinski95}, the $\varphi$-augmented morphisms $f$ and $g$ between pseudocompact dg algebras are weakly equivalent in the model category $\PCAlgc \varphi$. By replacing $f$ by $g$ we may and will assume that we have $\eta_n=0$ for all $n\geq 3$.

\emph{Step 6. We describe the differentials of $V_A^c$, $V_B^c$ and the restriction of the morphism $f$ to $R_B$.}
We now prove there are elements $w_A \in \Tr(T_{l_A}V^c_A)$ and $w_B \in \Tr(T_{l_B}V^c_B)$ satisfying the conditions in statement i) of Theorem~\ref{thm:main}. If we have $d<4$, then for degree reasons, we have to put $w_B=0$. If we have $d\geq 4$, then, by Lemma~10.5 of \cite{VandenBergh15}, there is an element $w_B \in \Tr(T_{l_B}V^c_B)$ such that we have $d(v)=\{w_B,v\}_{\omega_{\eta_B}}$ for all $v\in F_B \oplus R_B$. For $v$ in $F_A$, since we have assumed that $f$ restricts to the identity $F_B \to F_A$, we have
\[
d(v)=d(f(v))=f(d(v))=f(\{w_B,v\}_{\omega_{\eta_B}})=\{w_B,v\}_{\omega_{\eta_B}}\: .
\]
Now, in the spirit of the proof of Lemma~10.5 of \cite{VandenBergh15}, we consider the differential of $N_A$. We have
\[
d(d(z_A)+f(z_B))=d(f(z_B))=f(d(z_B))\: .
\]
Since we have $\eta_B=(d(z_B))^\dagger$ and $\eta=(d(z_A)+f(z_B))^\dagger$, this means that we have
\[
d(\eta')\eta''+(-1)^{|\eta'|}\eta'd(\eta'')=f(\eta_B)\quad \mod [l_A,?]\: .
\]
If we apply $d\ten \id_{N_A}$ to $\eta'\ten \eta''=-(-1)^{|\eta'||\eta''|}\eta''\ten \eta'$, this equality can be written as
\[
(-1)^{|\eta'|}\eta'd(\eta'')-(-1)^{|\eta'||\eta''|}d(\eta'')\eta'=f(\eta_B)\quad \mod [l_A,?]\: .
\]
As before, we write the element $\eta_B$ as the sum $\sum_{b\in \cb}[b,b^*]$. Hence we have
\[
(-1)^{|\eta'|+1}\eta'd(\eta'')+\sum_{b\in \cb}bf(b^*)=(-1)^{|\eta'||\eta''|+1}d(\eta'')\eta'+\sum_{b\in \cb}(-1)^{|b||b^*|}f(b^*)b\quad \mod [l_A,?]\: .
\]
Consequently, the component of each tensor degree of $(-1)^{|\eta'|+1}\eta'd(\eta'')+\sum_{b\in \cb}bf(b^*)$ is stable under the generator of the corresponding cyclic permutation group. 
This means that
\[
\ol{w_A}=(-1)^{|\eta'|+1}\eta'd(\eta'')+\sum_{b\in \cb}bf(b^*)
\]
is a (componentwise) cyclically symmetric element of $(T_{l_A}V^c_A)_{l_A}$ which is of degree $3-d$. Let $w_A$ be a preimage of $\ol{w_A}$ (recall that our ground field $k$ is of characteristic $0$) under the cyclic symmetrisation map
\[
\sym \colon \Tr(T_{l_A}V^c_A)\longrightarrow (\prod_{p\geq 1}(V_A^c)^{\ten_{l_A} p})_{l_A} \: .
\]
Then the element $w_A$ only contains cubic and higher terms. For a morphism $\phi \colon N_A\to l_A^e$ of $l_A^e$-modules, we define the map $\del_{\phi}\colon (T_{l_A}V^c_A)_{l_A} \to T_{l_A}V^c_A$ which maps $a_1\ten \cdots \ten a_n$ to $\phi(a_1)''a_2\ten \cdots \ten a_n\phi(a_1)'$,
where we extend $\phi$ by zero from $N_A$ to $V_A^c=F_A\oplus N_A$. The element $\eta$ of $N_A\ten_{l_A^e}N_A$, which is of degree $2-d$, defines a morphism $\eta^+ \colon \Hom_{l_A^e}(N_A,l_A^e) \to N_A$ of degree $2-d$ which maps $\phi$ to $(-1)^{|\phi||\eta|}\phi(\eta')''\eta''\phi(\eta')'$. So we have
\[
d(\eta^+(\phi))=(-1)^{|\phi||\eta|}\phi(\eta')''d(\eta'')\phi(\eta')'=(-1)^{|\phi||\eta|}(-1)^{|\phi|+1}\del_{\phi}(\ol{w_A})\: .
\]
For a morphism $\phi \colon N_A\to l_A^e$ of $l_A^e$-modules, we have the associated double $l_A$-derivation $i_\phi \colon T_{l_A}V^c_A \to T_{l_A}V^c_A\ten_k T_{l_A}V^c_A$ which maps $v$ to $\phi(v)$. We define the induced map \linebreak $\iota_\phi \colon \Tr(T_{l_A}V^c_A) \to T_{l_A}V^c_A$ which maps $\ol{f}$ to $(-1)^{|i_\phi(f)''||i_\phi(f)'|}i_\phi(f)''i_\phi(f)'$. In particular, it maps $a_1\ten \cdots \ten a_n$ to
\[
\sum_i\pm \phi(a_i)''a_{i+1}\ten \cdots \ten a_n \ten a_1\ten \cdots \ten a_{i-1}\phi(a_i)'\: .
\]
From $\iota_\phi(w_A)=\del_\phi(\ol{w_A})$, we deduce that we have $d(\eta^+(\phi))=(-1)^{|\phi||\eta|}(-1)^{|\phi|+1}\iota_{\phi}(w_A)$. Since the element $\eta$ is non-degenerate in $N_A\ten_{l_A^e}N_A$, the morphism $\eta^+$ has an inverse $\eta^- \colon N_A \to \Hom_{l_A^e}(N_A,l_A^e)$ of degree $d-2$. If we apply $\phi=\eta^-(v)$ to any homogeneous element $v$ of $N_A$, we obtain
\[
d(v)=(-1)^{(|v|+d-2)(d-2)+(|v|+d-2)+1}\iota_{\eta^-(v)}(w_A)=(-1)^{|v|(d+1)+1}\iota_{\eta^-(v)}(w_A) \: ,
\]
where we have
\[
v=\eta^+(\phi)=(-1)^{|\phi||\eta|}\phi(\eta')''\eta''\phi(\eta')'=-(-1)^{|\phi||\eta|}(-1)^{|\eta'||\eta''|}\phi(\eta'')''\eta'\phi(\eta'')'\: .
\]
Since the restriction of $D$ to $V_A^c$ is injective, we may identify $V_A^c$ with its image under $D$ to write
\begin{align*}
2\iota_\phi(\omega_\eta) &=\iota_\phi ((D\eta')(D\eta'')) \\
                         &=\phi(\eta')''(D\eta'')\phi(\eta')'-(-1)^{|\eta'||\phi|}\phi(\eta'')''(D\eta')\phi(\eta'')' \\
                         &=2(-1)^{|\phi||\eta|}Dv\: .
\end{align*}
Finally, we find that we have
\[
\iota_{\eta^-(v)}(\omega_\eta)=(-1)^{(|v|+d-2)(d-2)}Dv=(-1)^{d(|v|+1)}Dv\: .
\]
Since the element $\eta$ is non-degenerate and graded anti-symmetric in $N_A\ten_{l_A^e}N_A$, it yields a double Poisson bracket $\ldb?,-\rdb_{\omega_\eta}$ on $T_{l_A}V^c_A$, cf.~section~\ref{ss:the necklace bracket}. So we have
\[
\ldb u,v\rdb_{\omega_\eta}=-(-1)^{(|u|-|\omega_\eta|)(|v|-|\omega_\eta|)}(-1)^{|\ldb u,v\rdb_{\omega_\eta}'||\ldb u,v\rdb_{\omega_\eta}''|}\ldb v,u\rdb_{\omega_\eta}''\ten \ldb v,u\rdb_{\omega_\eta}'\: .
\]
In particular, we have $|\ldb u,v\rdb_{\omega_\eta}'|=|\ldb v,u\rdb_{\omega_\eta}''|$ and $|\ldb u,v\rdb_{\omega_\eta}''|=|\ldb v,u\rdb_{\omega_\eta}'|$. Therefore, we have
\begin{align*}
d(v) &=(-1)^{|v|+d+1}\iota_{H_v}(w_A) \\
     &=(-1)^{|v|+d+1}\iota_{H_v}(Dw_A) \\
     &=(-1)^{|v|+d+1}(-1)^{|i_{H_v}(Dw_A)''||i_{H_v}(Dw_A)'|}i_{H_v}(Dw_A)''i_{H_v}(Dw_A)' \\
     &=(-1)^{|v|+d+1}(-1)^{|H_v(w_A)''||H_v(w_A)'|}\ldb v,w_A\rdb_{\omega_\eta}''\ldb v,w_A\rdb_{\omega_\eta}' \\
     &=\pm \ldb w_A,v\rdb_{\omega_\eta}'\ldb w_A,v\rdb_{\omega_\eta}''\: ,
\end{align*}
where the sign is given by the parity of
\[
1+(|v|+d+1)+|\ldb v,w_A\rdb_{\omega_\eta}''||\ldb v,w_A\rdb_{\omega_\eta}'|+(|v|+d-2)(|w_A|+d-2)+|\ldb w_A,v\rdb_{\omega_\eta}'||\ldb w_A,v\rdb_{\omega_\eta}''|\: .
\]
Thus, we have
\begin{align*}
d(v) &=-(-1)^{|v|+d+1}(-1)^{(|v|+d-2)(|w_A|+d-2)}\ldb w_A,v\rdb_{\omega_\eta}'\ldb w_A,v\rdb_{\omega_\eta}'' \\
     &=-(-1)^{|v|+d+1}(-1)^{|v|+d}\{w_A,v\}_{\omega_\eta} \\
     &=\{w_A,v\}_{\omega_\eta}
\end{align*}
for all homogeneous elements $v$ of $N_A$. Since both sides are additive in $v$, 
the same equality holds for all $v$ in $N_A$.

Next, in the spirit of the proof of Lemma~10.5 of \cite{VandenBergh15}, we consider the image of $R_B$ under $f$. For a morphism $\phi \colon F_B\to l_B^e$ of $l_B^e$-modules, we define the map $\del_{\phi}\colon (T_{l_A}V^c_A)_{l_A} \to T_{l_A}V^c_A$ which maps $a_1\ten \cdots \ten a_n$ to $\phi(a_1)''a_2\ten \cdots \ten a_n\phi(a_1)'$, where we extend $\phi$ by zero from $F_A$ to $V_A^c=F_A\oplus N_A$. The element $\ol{\eta_B}=\sum_{b\in \cb}bb^*$ of $F_B\ten_{l_B^e}R_B$, which is of degree $3-d$, defines a morphism $\ol{\eta_B}^+ \colon \Hom_{l_B^e}(F_B,l_B^e) \to R_B$ of degree $3-d$ which maps $\phi$ to $(-1)^{|\phi||\ol{\eta_B}|}\phi(\ol{\eta_B}')''\ol{\eta_B}''\phi(\ol{\eta_B}')'$. So we have
\[
f(\ol{\eta_B}^+(\phi))=(-1)^{|\phi||\ol{\eta_B}|}\phi(\ol{\eta_B}')''f(\ol{\eta_B}'')\phi(\ol{\eta_B}')'=(-1)^{|\phi||\ol{\eta_B}|}\del_{\phi}(\ol{w_A})\: .
\]
For a morphism $\phi \colon F_B\to l_B^e$ of $l_B^e$-modules, we have the associated double $l_A$-derivation $i_\phi \colon T_{l_A}V^c_A \to T_{l_A}V^c_A\ten_k T_{l_A}V^c_A$ which maps $v$ to $\phi(v)$. We define the induced map \linebreak $\iota_\phi \colon \Tr(T_{l_A}V^c_A) \to T_{l_A}V^c_A$ which maps $\ol{f}$ to $(-1)^{|i_\phi(f)''||i_\phi(f)'|}i_\phi(f)''i_\phi(f)'$. In particular, it maps $a_1\ten \cdots \ten a_n$ to
\[
\sum_i\pm \phi(a_i)''a_{i+1}\ten \cdots \ten a_n \ten a_1\ten \cdots \ten a_{i-1}\phi(a_i)'\: .
\]
From $\iota_\phi(w_A)=\del_\phi(\ol{w_A})$, we deduce that we have $f(\ol{\eta_B}^+(\phi))=(-1)^{|\phi||\ol{\eta_B}|}\iota_{\phi}(w_A)$. Since the element $\ol{\eta_B}$ is non-degenerate in $F_B\ten_{l_B^e}R_B$, the morphism $\ol{\eta_B}^+$ has an inverse \linebreak $\ol{\eta_B}^- \colon R_B \to \Hom_{l_B^e}(F_B,l_B^e)$ of degree $d-3$. If we apply $\phi=\ol{\eta_B}^-(v)$ to any homogeneous element $v$ of $R_B$, we obtain
\[
f(v)=(-1)^{(|v|+d-3)(d-3)}\iota_{\ol{\eta_B}^-(v)}(w_A)=(-1)^{(|v|+1)(d+1)}\iota_{\ol{\eta_B}^-(v)}(w_A)\: ,
\]
where we have
\[
v=\ol{\eta_B}^+(\phi)=(-1)^{|\phi||\ol{\eta_B}|}\phi(\ol{\eta_B}')''\ol{\eta_B}''\phi(\ol{\eta_B}')'\: .
\]
Since the restriction of $D$ to $V_B^c$ is injective, we may identify $V_B^c$ with its image under $D$ to write
\begin{align*}
2\iota_\phi(\omega_{\eta_B}) &=\iota_\phi ((D\eta_B')(D\eta_B'')) \\
                             &=\phi(\eta_B')''(D\eta_B'')\phi(\eta_B')'-(-1)^{|\eta_B'||\phi|}\phi(\eta_B'')''(D\eta_B')\phi(\eta_B'')' \\
                             &=\phi(\ol{\eta_B}')''(D\ol{\eta_B}'')\phi( \ol{\eta_B}')'+\phi(\ol{\eta_B}')''(D\ol{\eta_B}'')\phi( \ol{\eta_B}')' \\
                             &=2(-1)^{|\phi||\ol{\eta_B}|}Dv\: .
\end{align*}
Finally, we find that we have
\[
\iota_{\ol{\eta_B}^-(v)}(\omega_{\eta_B})=(-1)^{(|v|+d-3)(d-3)}Dv=(-1)^{(|v|+1)(d+1)}Dv\: .
\]
Since the element $\eta_B$ is non-degenerate and graded anti-symmetric in $V^c_B\ten_{l_B^e}V^c_B$, it yields a double Poisson bracket $\ldb?,-\rdb_{\omega_{\eta_B}}$ on $T_{l_A}(V^c_B\oplus N_A)$. So we have
\[
\ldb u,v\rdb_{\omega_{\eta_B}}=-(-1)^{(|u|-|\omega_{\eta_B}|)(|v|-|\omega_{\eta_B}|)}(-1)^{|\ldb u,v\rdb_{\omega_{\eta_B}}'||\ldb u,v\rdb_{\omega_{\eta_B}}''|}\ldb v,u\rdb_{\omega_{\eta_B}}''\ten \ldb v,u\rdb_{\omega_{\eta_B}}'\: .
\]
In particular, we have $|\ldb u,v\rdb_{\omega_{\eta_B}}'|=|\ldb v,u\rdb_{\omega_{\eta_B}}''|$ and $|\ldb u,v\rdb_{\omega_{\eta_B}}''|=|\ldb v,u\rdb_{\omega_{\eta_B}}'|$. Therefore, we have
\begin{align*}
f(v) &=\iota_{H_v}(w_A) \\
     &=\iota_{H_v}(Dw_A) \\
     &=(-1)^{|i_{H_v}(Dw_A)''||i_{H_v}(Dw_A)'|}i_{H_v}(Dw_A)''i_{H_v}(Dw_A)' \\
     &=(-1)^{|H_v(w_A)''||H_v(w_A)'|}\ldb v,w_A\rdb_{\omega_{\eta_B}}''\ldb v,w_A\rdb_{\omega_{\eta_B}}' \\
     &=\pm \ldb w_A,v\rdb_{\omega_{\eta_B}}'\ldb w_A,v\rdb_{\omega_{\eta_B}}''\: ,
\end{align*}
where the sign is given by the parity of
\[
1+|\ldb v,w_A\rdb_{\omega_{\eta_B}}''||\ldb v,w_A\rdb_{\omega_{\eta_B}}'|+(|v|+d-3)(|w_A|+d-3)+|\ldb w_A,v\rdb_{\omega_{\eta_B}}'||\ldb w_A,v\rdb_{\omega_{\eta_B}}''|\: .
\]
Thus, we have
\begin{align*}
f(v) &=-(-1)^{(|v|+d-3)(|w_A|+d-3)}\ldb w_A,v\rdb_{\omega_{\eta_B}}'\ldb w_A,v\rdb_{\omega_{\eta_B}}'' \\
     &=-\{w_A,v\}_{\omega_{\eta_B}}
\end{align*}
for all homogeneous elements $v$ of $R_B$. Since both sides are additive in $v$, the same equality holds for all $v$ in $R_B$.

Since the differential of $A$ squares to zero and $f$ commutes with
the differential, the equivalent conditions in Propositions~\ref{prop:necklace a1} and \ref{prop:necklace a2} hold. This concludes the proof of the implication from ii) to i)
in Theorem~\ref{thm:main}.


\begin{thebibliography}{10}

\bibitem{Amiot09}
Claire Amiot, \emph{Cluster categories for algebras of global dimension 2 and
  quivers with potential}, Ann. Inst. Fourier (Grenoble) \textbf{59} (2009),
  no.~6, 2525--2590.

\bibitem{AssemBruestleSchiffler08}
I.~Assem, T.~Br\"ustle, and R.~Schiffler, \emph{Cluster-tilted algebras as
  trivial extensions}, Bull. Lond. Math. Soc. \textbf{40} (2008), no.~1,
  151--162.

\bibitem{Bocklandt08}
Raf Bocklandt, \emph{Graded {C}alabi {Y}au algebras of dimension 3}, J. Pure
  Appl. Algebra \textbf{212} (2008), no.~1, 14--32.

\bibitem{BocklandtBruyn02}
Raf Bocklandt and Lieven Le~Bruyn, \emph{Necklace {L}ie algebras and
  noncommutative symplectic geometry}, Math. Z. \textbf{240} (2002), no.~1,
  141--167.

\bibitem{BozecCalaqueScherotzke24}
Tristan Bozec, Damien Calaque, and Sarah Scherotzke, \emph{Relative critical
  loci and quiver moduli}, Ann. Sci. \'Ec. Norm. Sup\'er. (4) \textbf{57}
  (2024), no.~2, 553--614.

\bibitem{BravDyckerhoff19}
Christopher Brav and Tobias Dyckerhoff, \emph{Relative {C}alabi-{Y}au
  structures}, Compos. Math. \textbf{155} (2019), no.~2, 372--412.

\bibitem{BravDyckerhoff21}
\bysame, \emph{Relative {C}alabi-{Y}au structures {II}: shifted {L}agrangians
  in the moduli of objects}, Selecta Math. (N.S.) \textbf{27} (2021), no.~4,
  Paper No. 63, 45.

\bibitem{Christ22a}
Merlin Christ, \emph{Cluster theory of topological {F}ukaya categories},
  arXiv:2209.06595 [math.RT].

\bibitem{Christ21}
\bysame, \emph{Geometric models for the derived categories of {G}inzburg
  algebras of n-angulated surfaces via local-to-global principles},
  arXiv:2107.10091 [math.RT].

\bibitem{Christ22}
\bysame, \emph{Ginzburg algebras of triangulated surfaces and perverse
  schobers}, Forum Math. Sigma \textbf{10} (2022), Paper No. e8, 72.

\bibitem{CrawleyBoeveyEtingofGinzburg07}
William Crawley-Boevey, Pavel Etingof, and Victor Ginzburg,
  \emph{Noncommutative geometry and quiver algebras}, Adv. Math. \textbf{209}
  (2007), no.~1, 274--336.

\bibitem{CurtisReiner81}
Charles~W. Curtis and Irving Reiner, \emph{Methods of representation theory.
  {V}ol. {I}}, Pure and Applied Mathematics, John Wiley \& Sons, Inc., New
  York, 1981, With applications to finite groups and orders, A
  Wiley-Interscience Publication.

\bibitem{Davison12}
Ben Davison, \emph{Superpotential algebras and manifolds}, Adv. Math.
  \textbf{231} (2012), no.~2, 879--912.

\bibitem{DavisonMeinhardt15}
Ben Davison and Sven Meinhardt, \emph{Donaldson-{T}homas theory for categories
  of homological dimension one with potential}, arXiv:1512.08898 [math.AG].

\bibitem{DavisonMeinhardt20}
\bysame, \emph{Cohomological {D}onaldson-{T}homas theory of a quiver with
  potential and quantum enveloping algebras}, Invent. Math. \textbf{221}
  (2020), no.~3, 777--871.

\bibitem{ThanhofferVandenBergh18}
Louis de~Thanhoffer~de V\"olcsey and Michel Van~den Bergh, \emph{Calabi-{Y}au
  deformations and negative cyclic homology}, J. Noncommut. Geom. \textbf{12}
  (2018), no.~4, 1255--1291.

\bibitem{DerksenWeymanZelevinsky08}
Harm Derksen, Jerzy Weyman, and Andrei Zelevinsky, \emph{Quivers with
  potentials and their representations. {I}. {M}utations}, Selecta Math. (N.S.)
  \textbf{14} (2008), no.~1, 59--119.

\bibitem{DerksenWeymanZelevinsky10}
\bysame, \emph{Quivers with potentials and their representations {II}:
  applications to cluster algebras}, J. Amer. Math. Soc. \textbf{23} (2010),
  no.~3, 749--790.

\bibitem{DwyerSpalinski95}
W.~G. Dwyer and J.~Spali\'nski, \emph{Homotopy theories and model categories},
  Handbook of algebraic topology, North-Holland, Amsterdam, 1995, pp.~73--126.

\bibitem{EilenbergNakayama55}
Samuel Eilenberg and Tadasi Nakayama, \emph{On the dimension of modules and
  algebras. {II}. {F}robenius algebras and quasi-{F}robenius rings}, Nagoya
  Math. J. \textbf{9} (1955), 1--16.

\bibitem{FockGoncharov06a}
Vladimir Fock and Alexander Goncharov, \emph{Moduli spaces of local systems and
  higher {T}eichm\"uller theory}, Publ. Math. Inst. Hautes \'Etudes Sci.
  (2006), no.~103, 1--211.

\bibitem{FominZelevinsky02}
Sergey Fomin and Andrei Zelevinsky, \emph{Cluster algebras. {I}.
  {F}oundations}, J. Amer. Math. Soc. \textbf{15} (2002), no.~2, 497--529.

\bibitem{FominZelevinsky07}
\bysame, \emph{Cluster algebras. {IV}. {C}oefficients}, Compos. Math.
  \textbf{143} (2007), no.~1, 112--164.

\bibitem{Gabriel62}
Pierre Gabriel, \emph{Des cat\'egories ab\'eliennes}, Bull. Soc. Math. France
  \textbf{90} (1962), 323--448.

\bibitem{GeissLeclercSchroeer13b}
C.~Gei\ss, B.~Leclerc, and J.~Schr\"oer, \emph{Cluster structures on quantum
  coordinate rings}, Selecta Math. (N.S.) \textbf{19} (2013), no.~2, 337--397.

\bibitem{GeissLeclercSchroeer05}
Christof Geiss, Bernard Leclerc, and Jan Schr\"oer, \emph{Semicanonical bases
  and preprojective algebras}, Ann. Sci. \'Ecole Norm. Sup. (4) \textbf{38}
  (2005), no.~2, 193--253.

\bibitem{GeissLeclercSchroeer06}
Christof Gei\ss, Bernard Leclerc, and Jan Schr\"oer, \emph{Rigid modules over
  preprojective algebras}, Invent. Math. \textbf{165} (2006), no.~3, 589--632.

\bibitem{GeissLeclercSchroeer08a}
Christof Geiss, Bernard Leclerc, and Jan Schr\"oer, \emph{Preprojective
  algebras and cluster algebras}, Trends in representation theory of algebras
  and related topics, EMS Ser. Congr. Rep., Eur. Math. Soc., Z\"urich, 2008,
  pp.~253--283.

\bibitem{GeissLeclercSchroeer11b}
Christof Gei\ss, Bernard Leclerc, and Jan Schr\"oer, \emph{Kac-{M}oody groups
  and cluster algebras}, Adv. Math. \textbf{228} (2011), no.~1, 329--433.

\bibitem{Ginzburg06}
Victor Ginzburg, \emph{{Calabi-Yau} algebras}, arXiv:math/0612139v3 [math.AG].

\bibitem{JoyceSafronov19}
Dominic Joyce and Pavel Safronov, \emph{A {L}agrangian neighbourhood theorem
  for shifted symplectic derived schemes}, Ann. Fac. Sci. Toulouse Math. (6)
  \textbf{28} (2019), no.~5, 831--908.

\bibitem{JoyceSong12}
Dominic Joyce and Yinan Song, \emph{A theory of generalized
  {D}onaldson-{T}homas invariants}, Mem. Amer. Math. Soc. \textbf{217} (2012),
  no.~1020, iv+199.

\bibitem{Kadeishvili80}
T.~V. Kadei\v~svili, \emph{On the theory of homology of fiber spaces}, Uspekhi
  Mat. Nauk \textbf{35} (1980), no.~3(213), 183--188.

\bibitem{Kassel87}
Christian Kassel, \emph{Cyclic homology, comodules, and mixed complexes}, J.
  Algebra \textbf{107} (1987), no.~1, 195--216.

\bibitem{Keller01}
Bernhard Keller, \emph{Introduction to {$A$}-infinity algebras and modules},
  Homology Homotopy Appl. \textbf{3} (2001), no.~1, 1--35.

\bibitem{Keller08d}
\bysame, \emph{Calabi-{Y}au triangulated categories}, Trends in representation
  theory of algebras and related topics, EMS Ser. Congr. Rep., Eur. Math. Soc.,
  Z\"urich, 2008, pp.~467--489.

\bibitem{Keller11b}
\bysame, \emph{Deformed {C}alabi-{Y}au completions}, J. Reine Angew. Math.
  \textbf{654} (2011), 125--180, With an appendix by Michel Van den Bergh.

\bibitem{KellerLiu23b}
Bernhard Keller and Junyang Liu, \emph{On {A}miot's conjecture},
  arXiv:2311.06538 [math.RT].

\bibitem{KellerWang23}
Bernhard Keller and Yu~Wang, \emph{An introduction to relative {C}alabi-{Y}au
  structures}, Representations of algebras and related structures, EMS Ser.
  Congr. Rep., EMS Press, Berlin, [2023] \copyright 2023, pp.~279--304.

\bibitem{KellerWu23}
Bernhard Keller and Yilin Wu, \emph{Relative cluster categories and {H}iggs
  categories with infinite-dimensional morphism spaces}, with an appendix by
  Chris Fraser and Bernhard Keller, arXiv:2307.12279 [math.RT].

\bibitem{KellerYang11}
Bernhard Keller and Dong Yang, \emph{Derived equivalences from mutations of
  quivers with potential}, Adv. Math. \textbf{226} (2011), no.~3, 2118--2168.

\bibitem{KinjoMasuda24}
Tasuki Kinjo and Naruki Masuda, \emph{Global critical chart for local
  {C}alabi-{Y}au threefolds}, Int. Math. Res. Not. IMRN (2024), no.~5,
  4062--4093.

\bibitem{Kontsevich93}
Maxim Kontsevich, \emph{Formal (non)commutative symplectic geometry}, The
  {G}el\cprime fand {M}athematical {S}eminars, 1990--1992, Birkh\"auser Boston,
  Boston, MA, 1993, pp.~173--187.

\bibitem{KontsevichSoibelman08}
Maxim Kontsevich and Yan Soibelman, \emph{Stability structures, motivic
  {D}onaldson-{T}homas invariants and cluster transformations}, arXiv:0811.2435
  [math.AG].

\bibitem{KontsevichSoibelman01}
\bysame, \emph{Homological mirror symmetry and torus fibrations}, Symplectic
  geometry and mirror symmetry ({S}eoul, 2000), World Sci. Publ., River Edge,
  NJ, 2001, pp.~203--263.

\bibitem{KontsevichSoibelman11}
\bysame, \emph{Cohomological {H}all algebra, exponential {H}odge structures and
  motivic {D}onaldson-{T}homas invariants}, Commun. Number Theory Phys.
  \textbf{5} (2011), no.~2, 231--352.

\bibitem{Labardini09a}
Daniel Labardini-Fragoso, \emph{Quivers with potentials associated to
  triangulated surfaces}, Proc. Lond. Math. Soc. (3) \textbf{98} (2009), no.~3,
  797--839.

\bibitem{Lefevre03}
Kenji Lef\`evre-Hasegawa, \emph{Sur les ${A}_\infty$-cat\'egories}, Th\`ese de
  doctorat, {U}niversit\'e {D}enis {D}iderot -- Paris 7, November 2003,
  arXiv:math.CT/0310337.

\bibitem{Loday93}
Jean-Louis Loday, \emph{Une version non commutative des alg\`ebres de {L}ie:
  les alg\`ebres de {L}eibniz}, R.{C}.{P}.\ 25, {V}ol.\ 44 ({F}rench)
  ({S}trasbourg, 1992), Pr\'epubl. Inst. Rech. Math. Av., vol. 1993/41, Univ.
  Louis Pasteur, Strasbourg, 1993, pp.~127--151.

\bibitem{Loday98}
\bysame, \emph{Cyclic homology}, second ed., Grundlehren der mathematischen
  Wissenschaften [Fundamental Principles of Mathematical Sciences], vol. 301,
  Springer-Verlag, Berlin, 1998, Appendix E by Mar\'ia O. Ronco, Chapter 13 by
  the author in collaboration with Teimuraz Pirashvili.

\bibitem{Pressland17}
Matthew Pressland, \emph{Internally {C}alabi-{Y}au algebras and cluster-tilting
  objects}, Math. Z. \textbf{287} (2017), no.~1-2, 555--585.

\bibitem{Pressland20}
\bysame, \emph{Mutation of frozen {J}acobian algebras}, J. Algebra \textbf{546}
  (2020), 236--273.

\bibitem{Pressland22}
\bysame, \emph{Calabi-{Y}au properties of {P}ostnikov diagrams}, Forum Math.
  Sigma \textbf{10} (2022), Paper No. e56, 31.

\bibitem{Pressland17b}
\bysame, \emph{A categorification of acyclic principal coefficient cluster
  algebras}, Nagoya Math. J. \textbf{252} (2023), 769--809.

\bibitem{Reineke11}
Markus Reineke, \emph{Cohomology of quiver moduli, functional equations, and
  integrality of {D}onaldson-{T}homas type invariants}, Compos. Math.
  \textbf{147} (2011), no.~3, 943--964.

\bibitem{SchiffmannVasserot13}
O.~Schiffmann and E.~Vasserot, \emph{Cherednik algebras, {W}-algebras and the
  equivariant cohomology of the moduli space of instantons on {$\bold{A}^2$}},
  Publ. Math. Inst. Hautes \'Etudes Sci. \textbf{118} (2013), 213--342.

\bibitem{Szendroi08}
Bal\'azs Szendr\H~oi, \emph{Non-commutative {D}onaldson-{T}homas invariants and
  the conifold}, Geom. Topol. \textbf{12} (2008), no.~2, 1171--1202.

\bibitem{Toen14}
Bertrand To\"en, \emph{Lectures on dg-categories}, Topics in algebraic and
  topological {$K$}-theory, Lecture Notes in Math., vol. 2008, Springer,
  Berlin, 2011, pp.~243--302.

\bibitem{Vallette14}
Bruno Vallette, \emph{Algebra + homotopy = operad}, Symplectic, {P}oisson, and
  noncommutative geometry, Math. Sci. Res. Inst. Publ., vol.~62, Cambridge
  Univ. Press, New York, 2014, pp.~229--290.

\bibitem{VandenBergh01}
Michel Van~den Bergh, \emph{Blowing up of non-commutative smooth surfaces},
  Mem. Amer. Math. Soc. \textbf{154} (2001), no.~734, x+140.

\bibitem{VandenBergh08a}
\bysame, \emph{Double {P}oisson algebras}, Trans. Amer. Math. Soc. \textbf{360}
  (2008), no.~11, 5711--5769.

\bibitem{VandenBergh15}
\bysame, \emph{Calabi-{Y}au algebras and superpotentials}, Selecta Math. (N.S.)
  \textbf{21} (2015), no.~2, 555--603.

\bibitem{Wu23a}
Yilin Wu, \emph{Relative cluster categories and {H}iggs categories}, Adv. Math.
  \textbf{424} (2023), Paper No. 109040, 112.

\bibitem{Yeung16}
Wai-kit Yeung, \emph{Relative {C}alabi-{Y}au completions}, arXiv:1612.06352
  [math.RT].

\end{thebibliography}


\def\cprime{$'$} \def\cprime{$'$}
\providecommand{\bysame}{\leavevmode\hbox to3em{\hrulefill}\thinspace}
\providecommand{\MR}{\relax\ifhmode\unskip\space\fi MR }
\providecommand{\MRhref}[2]{%
  \href{http://www.ams.org/mathscinet-getitem?mr=#1}{#2}
}
\providecommand{\href}[2]{#2}

\end{document}